\newcommand{\ep}{\epsilon}
\newtheorem{theorem}{Theorem}[section]
\newtheorem{lemma}[theorem]{Lemma}
\newtheorem{claim}[theorem]{Claim}
\newtheorem{proposition}[theorem]{Proposition}
\newtheorem{corollary}[theorem]{Corollary}
\theoremstyle{definition}
\newtheorem{definition}[theorem]{Definition}
\theoremstyle{remark}
\newtheorem{remark}[theorem]{Remark}
\numberwithin{equation}{section}
\begin{document}
\selectlanguage{english}

\title{A Hamilton-Jacobi Approach to Evolution of Dispersal}

\author{King-Yeung Lam}
\affil{Department of Mathematics, Ohio State University, Columbus, OH,
USA\\
\texttt{lam.184@math.ohio-state.edu}}

\author{Yuan Lou}
\affil{School of Mathematical Sciences, CMA-Shanghai, Shanghai Jiao Tong University, Shanghai
200240,
China\\
\texttt{yuanlou@sjtu.edu.cn}}

\author{Beno\^\i t Perthame} 
\affil{Sorbonne Universit\'{e}, CNRS, Université de Paris, Inria, Laboratoire Jacques-Louis Lions, F-75005 Paris, France\\
\texttt{benoit.perthame@sorbonne-universite.fr}}
\date{}
\maketitle
\thispagestyle{empty}

\affil[$\star$]{}

%
%
\begin{abstract}
The evolution of dispersal is a classical question in evolutionary biology, and it has been studied in a wide range of mathematical models. A selection-mutation model,  in which the population is structured by space and a phenotypic trait, with the trait acting directly on the dispersal (diffusion) rate, was formulated by Perthame and Souganidis [Math. Model. Nat. Phenom. 11 (2016), 154–166] to study the evolution of random dispersal towards the evolutionarily stable strategy. For the rare mutation limit, it was shown that the equilibrium population concentrates on a single trait associated to the smallest dispersal rate.  In this paper, we consider the corresponding evolution equation and characterize  the asymptotic behaviors of the time-dependent solutions in the rare mutation limit, under mild convexity assumptions on the underlying Hamiltonian function. 
\end{abstract}
%
\noindent {\em AMS Subject Class. [2010]:} {35B25, 35F21, 35K57, 92D15}
\\
{\em Keywords: }{Dispersal evolution, Nonlocal pde, Constrained Hamilton-Jacobi equation, Effective fitness, Principal bundle}

\section{Introduction}\label{sec:1}

Recently, various mathematical models for evolutionary biology have been developed with the theories of adaptive dynamics, competition/selection equations, deterministic or stochastic. Here, we are interested in the specific case of the evolution of dispersal in a bounded domain and explain, in a continuous setting, the selection of the slowest. We introduce the framework in a general setting without space before describing the full model.

\subsection{Continuous modeling without space}

A convenient modeling background, based on nonlocal Lotka-Volterra parabolic equations, is considered in \cite{Barles2009,Lorz2011,Perthame2008}. It is expressed under the form
\begin{equation}\label{eq:LV}
\begin{cases}
\ep\partial_t \widetilde{n}_\ep = \ep^2 
\Delta_z \widetilde{n}_\ep + \widetilde{n}_\ep R(z, \widetilde\rho_\ep(t)) &\text{ for }z \in \mathbb{R}^N, \, t>0,\\
\widetilde\rho_\ep(t) = \int_{\mathbb{R}^N} \widetilde{n}_\ep(z,t)\,dz &\text{ for }t >0,\\
\widetilde{n}_\ep(z,0) = \widetilde{n}_{\ep,in}(z) &\text{ for }z \in \mathbb{R}^N.
\end{cases}
\end{equation}
The model \eqref{eq:LV}
describes the dynamics of a population with density  $\widetilde{n}_\ep(z,t)$,
which is structured by a physiological trait $z\in \mathbb{R}^N$. The population dynamics of \eqref{eq:LV}
is driven by both mutation and selection. The mutation process is modeled 
by the Laplacian $\Delta_z$, and the selection 
is expressed by the dependence of the (sign-changing) growth rate of an individual,
given by $R(z,\widetilde\rho_\ep(t))$, on the trait $z$ and the limiting total population $\widetilde\rho_\ep(t)$. We refer to \cite{Champagnat2006} for a derivation of this type of equations from individual based stochastic models. A striking property of
\eqref{eq:LV} is that, as $\ep \to 0$, the generic solution of this equation concentrates as a moving Dirac mass, i.e.  
\begin{equation}
\label{concentration} 
 \widetilde{n}_{\ep}(t) \approx \widetilde\rho(t)\delta(z-\bar{z}(t)) \quad \text{ in distribution},
\end{equation}
where $\widetilde\rho(t) = \lim\limits_{\ep \to 0} \widetilde\rho_\ep(t)$ and the trajectory $\bar{z}(t)$ can be interpreted biologically as the dominant trait at the time $t$. Furthermore, the trajectory $\bar{z}(t)$ can be inferred form the total population $\widetilde\rho(t)$ via the relation 
\begin{equation}\label{eq:RZR}
R(\bar{z}(t),\widetilde\rho(t))=0\quad  a.e.
\end{equation}
and the rare mutation limit of the WKB-ansatz $\widetilde{u}_\ep(z,t) := -\ep \log \widetilde{n}_\ep(z,t)$ and the total population $\widetilde\rho(t)$ satisfy, in the viscosity sense, the following Hamilton-Jacobi equation with a constraint.
\begin{equation}\label{eq:urho}
    \begin{cases}
\partial_t \widetilde{u} + |\partial_z \widetilde{u}|^2 + R(z, \widetilde\rho(t))=0 &\text{ for }z \in \mathbb{R}^N, \, t>0,\\
\inf_z \widetilde{u}(\cdot, t)=u(\bar{z}(t),t)=0 & \text{ for } t>0,\\
\widetilde{u}(z,0) = \widetilde{u}_{in}(z) &\text{ for }z \in \mathbb{R}^N.
\end{cases}
\end{equation}
The key to understanding the evolutionary dynamics $\bar{z}(t)$ is thus contained in the question of uniqueness of solution $(\widetilde{u},\widetilde\rho)$ of \eqref{eq:urho}, which was investigated in \cite{Calvez2018,Mirrahimi2016, Perthame2008}.

The model \eqref{eq:LV} enables a rigorous derivation of the so-called {\it canonical equation}, which has been formally proposed in the framework of adaptive dynamics to describe the trait evolution. See also \cite{Desvillettes2008,Jabin2011,Lorz2011,Perthame2008}. In \cite{Calvez2018}, it is further proved that $\widetilde\rho(t)$ is a nondecreasing function. Since the dominant trait satisfies \eqref{eq:RZR}, this leads to the conclusion that evolution favors the traits that can retain the proliferative advantage when the level of the total population is high.

\subsection{Discrete modeling with spatial structure}

In many biological situations, however, the proliferative advantage is not so directly linked to the physiological trait in question. This is the case, in particular, in the study of evolution of dispersal where individuals adopt different dispersal strategies without an apparent proliferative advantage. 
An 
interesting question in this direction is the selection of random dispersal rate \cite{Hastings1983}. For $K$ interacting populations
with densities
$n_i(x,t)$, $1\le i\le K$,  competing for a common resource $m(x)$ and differing only in their dispersal rates $\alpha_i$, their population dynamics can be described by the following competition system introduced in \cite{DHMP}: for $1 \leq i \leq K$,
\begin{equation}\label{eq:LVV}
\partial_t n_i= \alpha_i \Delta_x n_i + n_i\left(m(x) - \sum_{j=1}^K n_j\right) + \sum_{j=1}^K M_{ij} n_j\quad \text{ for }x \in D,\, t>0.\, 
\end{equation} 
In this context, these $K$
species are (i) subject to a mutation with switching rate $M_{ij}$ from
phenotype $j$ to $i$; (ii) moving randomly with diffusion rates $\alpha_i$ and (iii) competing for a common resource. The proliferative advantage of an individual is influenced by the way the individual moves and utilizes resources. For the two-species case without mutation ($M_{ij} \equiv 0$), a well-known result due to Dockery et al. \cite{DHMP} says that if $m(x)$ is non-constant, and the populations are subject to the no-flux boundary conditions, then the species with the smaller dispersal rate always exclude the faster competitor. 
The corresponding question for 
system \eqref{eq:LVV}
consisting of three or more species remains open; see \cite{Cantrell2021evolution} for recent progress. 

The system \eqref{eq:LVV} describes the competition among finitely many phenotypes with different dispersal rates, denoted by $\alpha_i$. In the next section, we will introduce a mathematical model with infinitely many phenotypes, 
parameterized by a one-dimensional trait variable $z$. Each $z$ corresponds to a phenotype with dispersal rate $\alpha(z)$,
which is a 
continuous function of $z$.

\subsection{The continuous model for evolution of dispersal}
In this paper, we consider a nonlocal reaction-diffusion equation, introduced in \cite{Perthame2015}, which can be viewed as the 
extension of the discrete trait model \eqref{eq:LVV}
to the continuous trait setting.
The main modeling assumptions are:
\\
(i) the dispersal rate of an individual is a positive function of its phenotypic trait $z$, that we denote by $\alpha(z)$;
\\
(ii) a Fisher-type Lotka-Volterra growth 
rate with  spatially heterogeneous carrying capacity $m(x)$ and limitation by the total population at the same spatial location; and
\\
(iii) rare mutation acting on the phenotypic trait variable and modeled by a diffusion with covariance $\sqrt{2}\ep.$
\\


More precisely, we study the asymptotic behavior, as $\epsilon \to 0$, of the density function $n_\ep = n_\ep(x,z,t): D \times I \times [0,T] \to [0,\infty)$, 
of the nonlocal,  nonlinear problem
\begin{equation}\label{eq:1.1}
\left\{
\begin{array}{ll}
\epsilon \partial_t n_\epsilon = \alpha(z) \Delta_x n_{\epsilon}  + n_\epsilon(m(x) - \rho_\epsilon(x,t) )  + \epsilon^2 \partial^2_z n_{\epsilon}&\text{ for  }x \in D,\, z \in I, \, t>0,\\
\partial_\nu n_\epsilon = 0 &\text{ for  } x \in \partial D,\, z \in I,\,  t>0,\\
\partial_z n_\epsilon = 0 &\text{ for }x \in D,\, z\in \partial I,\, t>0,\\
n_\epsilon(x,z,0) = n_{\epsilon,0}(x,z) &\text{ for }x \in D,\, z \in I,
\end{array}
\right.
\end{equation}
\begin{equation}\label{eq:rhoep}
\rho_\epsilon(x,t):= \int_{I} n_\epsilon(x,z,t)\,dz, \qquad x \in D,\, t>0.
\end{equation}
Here  $D$ represents a  bounded spatial domain in $\mathbb{R}^N$ with smooth boundary $\partial D$; $\nu$ denotes the outward unit normal vector on $\partial{D}$; $\Delta_x=\sum_{i=1}^N \partial^2_{x_i}$ is the Laplace operator in $x$ variables;  $I = (a,b) \subset \mathbb{R}$ is a bounded open interval with $a<b$; $\rho_\ep(x,t)$ is the population density of the entire species at location $x$ and time $t$. Without loss of generality we assume $|D|=1$ and $|I|=1$, i.e., $D$ and $I$ are both of unit Lebesgue measure. 
The interest of this problem is that the fitness is controlled by a combination between the dispersal $\alpha(z)$ and the competition for resources expressed by $m(x)- \rho_\ep(x,t)$. It is important that the resource $m(x)$ satisfies
\begin{description}
\item[(M)] $m \in C^\beta(\bar D)$ for
some $\beta\in (0, 1)$,
and it is a nonconstant, positive  function.
\end{description}
On the one hand, if {\bf(M)} does not hold, i.e. $m(x)$ is a constant and $\alpha(z) = z$, then it can be shown that the constant steady state solution is globally asymptotically stable among all nonnegative nontrivial solutions \cite{Lam2017b}. On the other hand, under the assumption {\bf(M)} and some mild assumptions, it can be shown that, for all $\epsilon$ sufficiently small, \eqref{eq:1.1} has a positive steady state solution $\widetilde{n}_\epsilon(x,z)$ concentrated at the trait where $\alpha$ attains the minimum value \cite{Lam2017a,Perthame2015}. Under suitable assumptions, the positive steady state $\widetilde{n}_\epsilon(x,z)$ is shown to be unique and locally asymptotically stable \cite{Lam2017b}. We will enforce hypothesis {\bf(M)} throughout the rest of the paper.

Our goal is to show that, for the evolution of dispersal, the same concentration effect holds around the  fittest trait $\bar z (t)$ as expressed in~\eqref{concentration}.

\subsection{Assumptions and Main Results}

\begin{definition} [Invasion exponent]\label{def:1}
\item 
For each $z_1, z_2 \in I$, let $\lambda(z_1, z_2)$ be the 
smallest eigenvalue of the linear 
equation
\begin{equation}\label{eq:invasionexp}
\left\{
\begin{array}{ll}
\alpha(z_1)\Delta_x \phi + (m-\theta_{z_2})\phi + \lambda \phi=0 &\text{ for }x \in D,\\
\partial_\nu \phi = 0&\text{ for }x \in \partial D,
\end{array}
\right.
\end{equation}
where, for each $z \in I$,  $\theta_{z}(x)$ is the unique positive solution of 
\begin{equation}\label{eq:theta}
\left\{
\begin{array}{ll}
\alpha(z)\Delta_x \theta_{z} + (m-\theta_{z})\theta_z =0 &\text{ for }x \in D,\\
\partial_\nu \theta_z = 0&\text{ for }x \in \partial D.
\end{array}
\right.
\end{equation}
\end{definition}

Notice that for $z_1=z_2=:z$, we have $\lambda(z,z)=0$, which corresponds to $\phi =\theta_{z}$. 
Throughout the paper we impose the following 
assumptions:

\begin{description}

\item[(H1)] $\alpha: I \to \mathbb{R}_+$ is smooth and   is chosen such that $0 < \inf_{I} \alpha \leq \sup_{I} \alpha< +\infty$ and
$$
(\exists K_\lambda>0)\quad 2K_\lambda \leq  \partial^2_{z_1} \lambda(z_1,z_2) \leq \frac{2}{K_\lambda}, \quad \text{ for all }(z_1,z_2) \in I \times I,
$$
and
$$
\partial_{z_1} \lambda(a,a) <0 \quad \text{ and }\quad \partial_{z_1}\lambda(b,b) >0,
$$

\item[(H2)] Let $u_\ep(x,z,t) = -\ep \log n_\ep(x,z,t)$, then the initial condition satisfies $$\left\|u_\ep(x,z,0) - V_0(z) - \frac{1}{2} \ep \log \ep \right\|_{C^2(\overline{D} \times \overline{I})} = O(\ep),$$ where $V_0(z) \in C^\infty(\bar{I})$ is a non-negative function such that for some $K_0>0$, 
$$
\partial^2_z V_0(z) > {2}{K_0} \,\, \text{ for }z \in I\,\,\text{ and }\,\,{\inf_I  V_0(z) = V_0(\bar{z}_0) = 0} \quad \exists~\bar{z}_0 \in {\rm Int}~I.$$




\end{description}

\begin{remark}
{\bf(H1)} ensures that the trajectory of the dominant trait remains continuous for $t\geq 0$. 
See Section \ref{sec:dis} for an explicit example of $\alpha$ so that {\bf (H1)} can be explicitly verified\footnote{When convexity assumption fails, then in general the dominant trait has jump discontinuities and belongs to the class of BV functions
for which the uniqueness of the 
Hamilton-Jacobi equation~\eqref {eq:limithj}
is more subtle; See~\cite{Calvez2018}.}. {We leave the general case for future work.}
\end{remark}

The following {uniqueness} result, which seems to be of independent interest,  plays a  critical role in characterizing the solution trajectories of \eqref{eq:1.1}. {Its proof, which uses the convexity of $\lambda(\cdot, z_2)$, is presented  in Appendix~\ref{sec:D}.}

\begin{proposition}[Constrained Hamilton-Jacobi eq.] \label{prop:traject}
Suppose that {\rm \textbf{(H1)}}~-~{\rm \textbf{(H2)}} hold.
\begin{itemize}
\item[\rm(i)] There exists a unique viscosity solution $(V(z,t),\bar{z}(t)) \in W^{1,\infty}(D \times [0,\infty)) \times W^{1,\infty}([0,\infty))$ to the following constrained Hamilton-Jacobi equation:
\begin{equation}\label{eq:limithj}
\left\{
\begin{array}{ll}
\partial_t V + |\partial_z V|^2 - \lambda(z, \bar{z}(t))=0 &\text{ for }z \in I, \,t>0,\\
\partial_z V(z,t) = 0 &\text{ for }z \in \partial I, \, t>0,\\
V(z,0) = V_0(z) &\text{ for }z \in I,\\
\inf_{z \in I} V(z,t)  = 0 &\text{ for }t>0.
\end{array} \right.
\end{equation}

\item[\rm(ii)] $\bar{z} \in C^1([0,\infty))$ and for each $t\geq 0$, 
$V(z,t) = 0$ if and only if $z = \bar{z}(t)$. Furthermore, 
$$
V(z,t) = \frac{1}{2} \partial^2_{z}V(\bar{z}(t),t)|z - \bar{z}(t)|^2 + o(|z - \bar{z}(t)|^2)$$ and for each $T>0$ there exists $K_3>0$ such that
 ${K^{-1}_3} \leq \partial^2_{z}V(\bar{z}(t),t)  \leq  K_3$ for $ t \in [0,T]$.

\item[\rm(iii)] $\bar{z}(t)$ satisfies the ODE with coefficient $\sigma(t):= \partial^2_{z}V(\bar{z}(t),t)$
\begin{equation}\label{eq:canonical}
\frac{d}{dt} 
\bar{z}(t) 
= -\frac{\partial_{z_1} \lambda( \bar{z}(t),  \bar{z}(t))}{\sigma(t)}, 
\qquad \text{ for } t>0,
\end{equation}
with the initial data $\bar{z}(0) = \bar{z}_0$.


\end{itemize}
\end{proposition}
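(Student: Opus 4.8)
\emph{Proof proposal.} The plan is to recast \eqref{eq:limithj} as a fixed-point problem for the trajectory $\bar z(\cdot)$. To each Lipschitz curve $\zeta\colon[0,\infty)\to\bar I$ with $\zeta(0)=\bar z_0$ I associate the unique viscosity solution $V^\zeta$ of the \emph{local} Hamilton--Jacobi equation $\partial_t V+|\partial_z V|^2=\lambda(z,\zeta(t))$ with the Neumann condition and $V^\zeta(\cdot,0)=V_0$; existence, uniqueness, and Lipschitz bounds on $V^\zeta$, locally uniform in $\zeta$, are classical. The heart of the argument is to propagate uniform convexity: differentiating the equation twice in $z$, the quantity $p=\partial_z^2 V^\zeta$ satisfies, along the characteristics $\dot z=2\partial_z V^\zeta$, the Riccati balance $\dot p=-2p^2+\partial_{z_1}^2\lambda(z,\zeta(t))$, so the two-sided bound $2K_\lambda\le\partial_{z_1}^2\lambda\le 2/K_\lambda$ of \textbf{(H1)} together with $\partial_z^2 V_0>2K_0$ and $\|V_0\|_{C^2}<\infty$ from \textbf{(H2)} traps $p$ between two positive constants forward in time, by comparison on $\min_z p$ and $\max_z p$ (in the semiconvexity sense, as in the now-standard treatment of constrained Hamilton--Jacobi equations, the Neumann boundary requiring a dedicated argument). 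Consequently $V^\zeta(\cdot,t)$ is strictly convex on $\bar I$ with a unique minimizer $\Phi[\zeta](t)$, obeys $V^\zeta(z,t)-V^\zeta(\Phi[\zeta](t),t)\ge\frac{1}{2K_3}|z-\Phi[\zeta](t)|^2$, and is $C^2$ in $z$ near $\Phi[\zeta](t)$ with $K_3^{-1}\le\partial_z^2 V^\zeta(\Phi[\zeta](t),t)\le K_3$ (no characteristics cross, since the solution is convex).

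I then identify the dynamics of the minimizer. Since $\partial_z V^\zeta(\Phi[\zeta](t),t)\equiv 0$, differentiating in $t$ and inserting the $z$-derivative of the equation yields $\sigma^\zeta(t)\,\frac{d}{dt}\Phi[\zeta](t)=-\partial_{z_1}\lambda(\Phi[\zeta](t),\zeta(t))$ with $\sigma^\zeta(t):=\partial_z^2 V^\zeta(\Phi[\zeta](t),t)\in[K_3^{-1},K_3]$; in particular $\Phi[\zeta]$ is Lipschitz uniformly in $\zeta$. The conditions $\partial_{z_1}\lambda(a,a)<0$ and $\partial_{z_1}\lambda(b,b)>0$ make $a$ and $b$ repelling for this ODE restricted to the diagonal $\zeta=\Phi[\zeta]$, so---by a continuation argument on the maximal interval on which the minimizer is interior, where only the local-in-space convexity is used---the minimizer stays in a fixed compact subinterval of $I$; hence $\Phi$ maps $X=\{\zeta\in\mathrm{Lip}([0,T];\bar I):\zeta(0)=\bar z_0,\ \mathrm{Lip}(\zeta)\le L\}$ into itself for a suitable $L$. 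A fixed point $\bar z=\Phi[\bar z]$ solves \eqref{eq:limithj}: then $\lambda(z,\bar z(t))$ is the correct Hamiltonian, $\bar z(t)$ is the unique minimizer of $V:=V^{\bar z}$, and differentiating $t\mapsto V(\bar z(t),t)$, together with $\lambda(\bar z(t),\bar z(t))=0$, gives $\frac{d}{dt}\min_z V=\partial_t V(\bar z(t),t)=0$, so the constraint $\inf_z V(\cdot,t)=0$ propagates from $V_0$; the canonical equation \eqref{eq:canonical} is precisely the minimizer ODE on the diagonal, and since $\sigma(t)=\partial_z^2 V(\bar z(t),t)$ is continuous in $t$, $\bar z\in C^1$. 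The quadratic expansion and the equivalence $V(z,t)=0\iff z=\bar z(t)$ in (ii) then follow from Taylor's theorem at the minimum and the strict convexity of $V(\cdot,t)$.

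To produce the fixed point and to prove uniqueness I use the $\zeta$-stability of the construction. With $w^\zeta:=\partial_z V^\zeta$ and $\phi:=w^{\zeta_1}-w^{\zeta_2}$, the difference solves $\partial_t\phi+(w^{\zeta_1}+w^{\zeta_2})\partial_z\phi+\phi\,(\partial_z^2 V^{\zeta_1}+\partial_z^2 V^{\zeta_2})=\partial_{z_1}\lambda(z,\zeta_1)-\partial_{z_1}\lambda(z,\zeta_2)$, and since $\partial_z^2 V^{\zeta_i}\ge K_3^{-1}>0$ this is a damped transport equation with vanishing initial and boundary data, so $\|w^{\zeta_1}(\cdot,t)-w^{\zeta_2}(\cdot,t)\|_{L^\infty(\bar I)}\le C\int_0^t|\zeta_1-\zeta_2|$. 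Combined with $|\Phi[\zeta_1](t)-\Phi[\zeta_2](t)|\le K_3\,|\phi(\Phi[\zeta_2](t),t)|$ --- which comes from $w^{\zeta_i}(\Phi[\zeta_i](t),t)=0$ and the global lower convexity bound on $V^{\zeta_1}$ --- this yields $\|\Phi[\zeta_1]-\Phi[\zeta_2]\|_{L^\infty[0,T]}\le CT\,\|\zeta_1-\zeta_2\|_{L^\infty[0,T]}$, a contraction on $X$ for $T$ small; this gives existence and uniqueness on $[0,T]$, and since all constants are uniform in $T$ one continues to $[0,\infty)$, and (applying the same stability estimate on the diagonal and invoking Gr\"onwall) two solutions must coincide globally. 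The main obstacle is the convexity step at the Neumann boundary: one must certify that the propagated lower bound on $\partial_z^2 V^\zeta$ really holds up to $\partial I$, so that $V^\zeta(\cdot,t)$ is globally strictly convex and its minimizer unique, while the two-sided bound is needed only near the interior minimizer; a secondary point is arranging the loop ``$C^2$-regularity near the minimizer $\Rightarrow$ canonical ODE $\Rightarrow$ interiority and stability $\Rightarrow$ existence'' so that it is not circular, which is done by working on short time intervals with uniform constants and continuing.
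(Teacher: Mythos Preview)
Your approach is correct in outline and genuinely different from the paper's. The paper does not set up a fixed-point argument at all: it simply cites \cite{Lorz2011,Mirrahimi2016} for existence and for assertions (ii)--(iii) (convexity propagation, uniqueness of the minimizer, the canonical ODE), and reserves its own work for the uniqueness part of (i). That uniqueness proof has a very different flavor: given two solutions $(V_i,\bar z_i)$, the paper uses the canonical equation (already granted by (iii)) to show both trajectories are monotone in the same direction, restricts to a subinterval $I'\subset I$ containing their joint range, and then invokes a general $BV$-uniqueness proposition (Proposition~\ref{propunique}) proved via the optimal-control representation \eqref{eq:variationp} of $V_i$, monotonicity lemmas for $\bar z_i$, and the machinery of \cite{Calvez2018}.

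Your route has the advantage of being self-contained and constructive: the Riccati comparison for $p=\partial_z^2 V^\zeta$ (giving the two-sided convexity bounds) and the damped transport equation for $\partial_z V^{\zeta_1}-\partial_z V^{\zeta_2}$ (the damping coming precisely from those convexity bounds) together yield existence, uniqueness, and (ii)--(iii) in one stroke, without variational formulas or the $BV$ framework. What the paper's route buys is a uniqueness statement that survives beyond the convex setting---Proposition~\ref{propunique} explicitly does \emph{not} assume convexity of the Hamiltonian or of the initial data---which is relevant for the generalizations discussed in Section~\ref{sec:dis}. The main technical point you would still have to settle, and you flag it, is the convexity propagation up to $\partial I$ under the viscosity Neumann condition: since $V_0$ is strictly convex with interior minimum, $V_0'(a)<0<V_0'(b)$, so the Neumann condition is initially satisfied only in the viscosity sense, and the even-reflection trick destroys convexity; you need a separate argument (e.g.\ a boundary test function for a lower bound on $\partial_z^2 V^\zeta$, or working on the even-periodic extension with semiconvexity rather than convexity) to close this.
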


Next, we state our main theorem.
\begin{theorem}[Dynamics of the fittest trait]\label{thm:main}
Assume {\bf (H1)}~-~{\bf (H2)}. 
For each $T>0$, as $\epsilon \to 0$, 
\begin{equation}\label{eq:1.4.1}
n_\ep(x,z,t) \to \delta_0(z - \bar{z}(t)) \theta_{\bar{z}(t)}(x) \quad \text{ in  distribution sense in } \bar D \times \bar I \times (0,T],
\end{equation}
where $\bar{z}(t) \to \text{argmin} \,\alpha(z)$ as $t \to \infty$. 
In fact, we have
\begin{equation}\label{eq:1.4.2}
-\ep \log n_\ep(x,z,t) \to V(z,t)\quad \text{ in } C(\bar{D} \times \bar{I} \times [0,T]),
\end{equation} and
\begin{equation}\label{eq:1.4.3}
\rho_\ep(x,t) \to \theta_{\bar{z}(t)}(x) \quad \text{ in } C_{loc}(\bar D  \times (0,T]),
\end{equation}
where $V(z,t)$ and $\bar{z}(t)$ are given by Proposition {\rm\ref{prop:traject}}; and, for $z \in I$, $\theta_z(x)$ is the unique positive solution of \eqref{eq:theta}.
\end{theorem}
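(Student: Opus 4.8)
## Proof Strategy for Theorem 1.4

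\textbf{Overall plan.} The argument follows the by-now classical perturbed-test-function / half-relaxed-limits scheme for constrained Hamilton–Jacobi equations, but the novelty here is the spatial ($x$) direction, which is \emph{not} rescaled: only the trait variable $z$ carries the small parameter. The plan is therefore to (a) establish uniform-in-$\ep$ bounds on $\rho_\ep$ and on $u_\ep = -\ep\log n_\ep$ (and its derivatives) so that the half-relaxed limits $\overline u = \limsup{}^* u_\ep$ and $\underline u = \liminf{}_* u_\ep$ are finite; (b) show that $u_\ep(x,z,t)$ becomes independent of $x$ in the limit, collapsing the problem to the one-dimensional trait equation; (c) identify the nonlocal coupling $\rho_\ep(x,t) \to \theta_{\bar z(t)}(x)$ via a principal-eigenvalue / principal-bundle argument on the fast ($x$) variable; (d) pass to the limit in the viscosity sense to show $\overline u, \underline u$ are sub/supersolutions of \eqref{eq:limithj}; and (e) invoke the uniqueness in Proposition \ref{prop:traject} to conclude $\overline u = \underline u = V$, hence \eqref{eq:1.4.2}, from which \eqref{eq:1.4.1} and \eqref{eq:1.4.3} follow.

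\textbf{A priori estimates.} First I would derive the a priori bounds. Upper and lower bounds on $\rho_\ep$ follow from comparison with spatially homogeneous sub/supersolutions of the $\rho$-equation (using $0 < \inf m \le \sup m < \infty$), giving $0 < \underline\rho \le \rho_\ep(x,t) \le \overline\rho$ for $t$ bounded away from $0$; the initial layer near $t=0$ is handled separately using {\bf(H2)}. These translate into an $L^\infty$ bound on $u_\ep$ from above and, on compact $x$-sets and $t \in (0,T]$, a bound from below. Gradient bounds: one writes the equation satisfied by $u_\ep$,
\begin{equation}\label{eq:ueq}
\partial_t u_\ep = \alpha(z)\Delta_x u_\ep - \alpha(z)\frac{|\partial_x u_\ep|^2}{\ep} - (m - \rho_\ep) + \ep \partial_z^2 u_\ep - |\partial_z u_\ep|^2,
\end{equation}
and uses a Bernstein-type argument. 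The crucial structural point — and what makes the $x$-variable collapse — is the large factor $1/\ep$ in front of $|\partial_x u_\ep|^2$: this forces $\partial_x u_\ep \to 0$, so that any limit of $u_\ep$ is $x$-independent. Establishing this rigorously (e.g. a bound $|\partial_x u_\ep|^2 \le C\ep$) together with semiconvexity in $z$ inherited from {\bf(H2)} and preserved by the convexity assumption in {\bf(H1)} on $\lambda(\cdot,z_2)$ is, I expect, the main technical obstacle; it requires care because the coefficients $\alpha(z)$ and the nonlocal term $\rho_\ep$ are only controlled, not smooth in $\ep$.

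\textbf{Identifying the limit and the coupling.} Once $u_\ep \to V(z,t)$ locally uniformly along subsequences with $V$ independent of $x$, the set $\{z : V(z,t) = 0\}$ must be where the mass concentrates. On this set, for fixed $t$, the profile of $n_\ep$ in $x$ solves (to leading order) the elliptic problem $\alpha(\bar z)\Delta_x \psi + (m - \rho)\psi = 0$ with the constraint that $\rho$ is the associated density; matching with the definition of $\theta_z$ and the fact that $\lambda(\bar z,\bar z)=0$ forces $\rho_\ep(x,t) \to \theta_{\bar z(t)}(x)$ and pins the concentration to a single point $\bar z(t)$ (using {\bf(H2)}'s strict convexity to rule out plateaus). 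Substituting $\rho \approx \theta_{\bar z(t)}$ back into \eqref{eq:ueq} and performing the perturbed-test-function argument — test against $\phi(z,t) + \ep\,\chi(x,z,t)$ where $\chi$ absorbs the $x$-oscillation via the principal eigenfunction of \eqref{eq:invasionexp} — yields that $\overline u$ is a viscosity subsolution and $\underline u$ a supersolution of $\partial_t V + |\partial_z V|^2 - \lambda(z,\bar z(t)) = 0$ with the Neumann condition on $\partial I$ and the constraint $\inf_z V = 0$. Comparison via Proposition \ref{prop:traject}(i) gives $\overline u \le \underline u$, hence equality with $V$; then \eqref{eq:1.4.1} and \eqref{eq:1.4.3} are read off from $n_\ep = e^{-u_\ep/\ep}$ and the normalization $\int_I n_\ep\,dz = \rho_\ep$, and the long-time behavior $\bar z(t) \to \mathrm{argmin}\,\alpha$ follows from the ODE \eqref{eq:canonical} together with the sign conditions $\partial_{z_1}\lambda(a,a)<0$, $\partial_{z_1}\lambda(b,b)>0$ in {\bf(H1)}, which make $\mathrm{argmin}\,\alpha$ the unique globally attracting equilibrium.
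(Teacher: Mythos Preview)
Your outline has the right overall shape but misses two structural points that the paper's proof is built around. First, your corrector is circular: you propose to test with $\phi(z,t) + \ep\,\chi(x,z,t)$ where $\chi$ is the principal eigenfunction of \eqref{eq:invasionexp}, but that eigenvalue problem has potential $m - \theta_{\bar z(t)}$, and $\bar z(t)$ is precisely the object you are trying to construct. If instead you use the elliptic eigenfunction with the actual potential $m - \rho_\ep(x,t)$, you hit a time-regularity obstacle: $\rho_\ep$ has no a priori regularity in $t$ beyond H\"older in the fast variable $\tau = t/\ep$ (Corollary~\ref{cor:apriorisum}), so the error term $\ep\,\partial_t\chi$ in the perturbed-test-function computation is uncontrolled --- the paper flags this explicitly in the footnote to \eqref{eq:bundlee}. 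The resolution in the paper is to take as corrector the \emph{parabolic} normalized principal Floquet bundle $(\Phi_\ep,H_\ep)$ of \eqref{eq:bundlee}, which absorbs $\ep\,\partial_t\Phi_\ep$ by construction and is defined purely from $\rho_\ep$; an \emph{approximate} trajectory $\bar z_\ep(t)$ is then obtained by solving an auxiliary constrained Hamilton--Jacobi equation with Hamiltonian $H_\ep$ (Proposition~\ref{prop:traject2}), and only afterwards is the loop closed by proving $\rho_\ep \to \theta_{\bar z_\ep(t)}$ and $H_\ep \to \lambda(\cdot,\bar z_\ep(t))$.

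Second, your claim that semiconvexity of $u_\ep$ in $z$ is ``inherited from {\bf(H2)} and preserved'' is not available in this setting: the paper notes (end of Section~1.6) that, unlike in \cite{Jabin2016}, uniform convexity of $u_\ep$ cannot be established a priori. Correspondingly, the convexity of the effective Hamiltonian $H_\ep(\cdot,t)$ --- which is what the uniqueness theory for the constrained equation actually needs --- is not global in time but must be propagated: the proof establishes it on a short interval $[0,\delta_1]$ via Corollary~\ref{cor:h2}, runs the full comparison there, and then iterates in uniform time-steps $\delta_1$ (Step~5 of Section~\ref{sec:main}) to reach any $T$. Your half-relaxed-limits scheme provides no mechanism for this bootstrap, and without it Proposition~\ref{prop:traject} cannot be invoked to collapse $\overline u$ and $\underline u$.
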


\begin{remark}
One can also replace the hypothesis {\bf(H1)}~-~{\bf(H2)} by  
\begin{description}

\item[(H1$'$)] Suppose $V_0(z)$ and $\alpha(z)$ are chosen such that \eqref{eq:limithj} has a unique viscosity solution $(V(z,t),\bar{z}(t)) \in W^{1,\infty}(D \times (0,\infty)) \times W^{1,\infty}(0,\infty)$ such that the conclusions (ii) and (iii) of Proposition \ref{prop:traject} hold.
\end{description}
For instance, our conclusion holds for the case $\alpha(z)>0 $ being periodic in $z$, and the Neumann condition being replaced by a periodic condition in $z$, provided that $\alpha$ has a non-degenerate minimum attained at $\hat z$, and such that $V_0(z) = K(z - \bar{z}_0)^2$, provided $|\hat z - \bar{z}_0| \ll 1$ and $K \gg 1$.
\end{remark}

\subsection{Biological interpretation}

The Darwinian evolution of a quantitative trait is the combined effects of two biological processes: (i) mutations generating variations in the trait value; and (ii) selection via relative reproductive fitness, resulting from ecological interactions between individuals and their environment. The framework of adaptive dynamics \cite{odo,Geritz2008} is based on the assumption of separation of timescales between the mutation/evolutionary and selection/ecological processes. One important advance of this theory, due to Dieckmann and Law \cite{Dieckmann1996}, is the formal derivation of the so-called canonical equation of adaptive dynamics: An ordinary differential equation that gives the rate of change over time of the expected trait value in a monomorphic population. In previous works \cite{Diekmann2005,Lorz2011}, the canonical equation was rigorously derived in case the ecological interaction can be described by ODEs, i.e., the fitness function is explicitly given in terms of the trait $z$. This is not the case, however, for evolution of dispersal, for which the incorporation of spatial structure in the model is essential and causes considerable mathematical difficulties. See \cite{Bouin2015,Jabin2016,Mirrahimi2015}. Also, using a continuous trait explains the 'accelerating waves' which have been actively studied recently, \cite{BouinH17,  BouinHR17b, BouinHR17a, HendersonPS2018}.

In this paper, we define an effective Hamiltonian $H_\epsilon(z,t)$ by improving the existing theory of the principal bundle for parabolic problems. This Hamiltonian can be viewed as a fitness function of the trait $z$ interacting with the environment at time $t$ (as described by $m(x) - \rho_\epsilon(x,t)$). One achievement of this paper is to show rigorously that, in a suitable timescale, the fitness function $H_\epsilon(z,t)$ converges to $\lambda(z, \bar{z}(t))$ in the rare mutation limit, where $\bar{z}(t)$ is the fittest trait at time $t$ (i.e. in the environment at time $t$) and, in this environment set by the fittest trait, the function $\lambda(z,\bar{z}(t))$ is the relative fitness of trait $z$ 
and is defined implicitly by an elliptic linear eigenvalue problem arising from the pairwise ecological interaction between 
two traits. The canonical equation~\eqref{eq:canonical}
can then be rigorously derived. It is interesting to note that the mean trait $\bar{z}(t)$ and the variance $\partial_{zz} V(z,t)\big|_{z = \bar{z}(t)}$ does not satisfy a closed system of ODEs. 

While our previous works \cite{Lam2017b, Lam2017a, Perthame2015} characterized the unique evolutionarily stable strategies (ESS) by solving the steady state problem, our present work on the time-dependent problem describes the approach of the dominant trait to the ESS in the evolutionary timescale.

\subsection{A heuristic presentation of the analytical approach}

The WKB-ansatz defines $u_\ep$ by
$$
n_\ep(x,z,t) = e^{ - \frac{u_\ep(x,z,t)}{\ep}}, 
$$ 
where the rate function $u_\ep(x,z,t)$ satisfies the equation
\begin{equation}\label{eq:uu}
\partial_t u_\ep - \frac{\alpha(z)}{\ep} \Delta_x u_\ep + \frac{\alpha(z)}{\ep^2}| \nabla_x u_\ep|^2 - \ep \partial^2_z u_\ep + |\partial_z u_\ep|^2 \,+ m(x) - \rho_\ep(x,t) =0.
\end{equation}
Whereas the large coefficients of the spatial derivatives suggests that $u(z,t) = \lim\limits_{\ep \to 0}u_\ep(x,z,t)$ is constant in $x$ in the limit, the equation \eqref{eq:uu} itself depends non-trivially on the spatial variables, through the terms $m(x)  - \rho_\ep(x,t)$. 

To obtain the limiting Hamilton-Jacobi equation (which is supposed to be free of the $x$-dependence), the perturbed test function method has been invented in \cite{Evans1989} by considering 
$$
n_\ep(x,z,t) = \Phi_\ep(x,z,t)e^{ - \frac{v_\ep(x,z,t)}{\ep}},
$$
i.e. $v_\ep(x,z,t) = u_\ep + \ep \log\Phi_\ep(x,z,t)$. To eliminate the $x$-dependence in \eqref{eq:uu}, we construct the corrector\footnote{The elliptic eigenfunction and eigenvalues (i.e. by omitting the term $\ep\partial_t\Phi_\ep$ in \eqref{eq:bundlee}) are not adopted here. The reason is that the lack of regularity in time for $\rho_\ep$ will then render it quite difficult to estimate
 $\ep\partial_t\Phi_\ep$, which only goes to zero in some weak, average sense.} $\Phi_\ep(x,z,t)> 0$ {as the {\it normalized principle boundle} that means to satisfy,  for each $z$,
\begin{equation}\label{eq:bundlee}
\ep\partial_t \Phi_\ep - \alpha(z)\Delta_x \Phi_\ep = \big(m(x) - \rho_\ep(x,t) + H_\ep(z,t)\big)\Phi_\ep, \qquad {\int_D \Phi_\ep(x,z,t) dx =1}.
\end{equation}
We can build (see Appendix~\ref{sec:B}) the normalizing factor $t\mapsto H_\ep(z,t)$ so that $\Phi_\ep(x,z,t)$ satisfies the mass $1$ constraint {(and is bounded in $C(\bar D \times [0,\infty))$ thanks to the Harnack inequality)}.} Denoting $\varphi_\ep:= -\log \Phi_\ep$, the equation of $v_\ep$ can be written as
\begin{equation}\label{eq:v1}
\begin{array}{ll}
\partial_t v_\ep - \frac{\alpha(z)}{\ep} \Delta_x v_\ep +  \frac{\alpha(z)}{\ep^2} |\nabla_x v_\ep|^2 +  2\frac{\alpha(z)}{\ep} \nabla_x v_\ep \nabla_x \varphi_\ep&-\ep \partial^2_z v_\ep + |\partial_z v_\ep|^2 + 2\ep \partial_z v_\ep \partial_z \varphi_\ep  \\[5pt]
\qquad \qquad\qquad  = H_\ep(z,t) - \ep^2 ( \partial^2_z \varphi_\ep + |\partial_z \varphi_\ep|^2) &\text{for }x \in D, z \in I, t>0.
\end{array}
\end{equation}
Provided that {$\varphi_\ep \in L^\infty$} and it has bounded derivatives in the trait variable\footnote{We need to show the terms $- \ep^2 ( \partial^2_z \varphi_\ep + |\partial_z \varphi_\ep|^2) \to 0$ uniformly. In general, the Cauchy problem \eqref{eq:bundlee} does not guarantee smoothness with respect to $z$, due to the dependence on initial data and the timescale $\ep t$. We will therefore make a canonical choice of an appropriate eternal solution of \eqref{eq:bundlee}. This requires {\it a priori} H\"{o}lder esimates of $\rho_\ep(x,t)$, as well as an improvement upon existing theory of principal bundle (Appendix \ref{sec:B}).}, 
we can show by comparison that $v_\ep(x,z,t) \approx \widetilde{V}_\ep(z,t)$, where the approximate solution $\widetilde{V}_\ep(z,t)$ can be obtained by solving the following Hamilton-Jacobi equation which is free of $x$-dependence:
\begin{equation}\label{eq:V1}
\partial_t \widetilde{V}_\ep  + |\partial_z \widetilde{V}_\ep|^2 = H_\ep(z,t).
\end{equation}
{Therefore, we can approximate for a given time $t$ the dominant trait, which maximizes $z\mapsto n_\ep(x,z,t)$, by the value  $\bar{z}_\ep(t)$ where $\widetilde{V}_\ep$ attains its minimum.}
In fact, we can determine this approximate trajectory $\bar{z}_\ep(t)$ from $H_\ep(z,t)$ as well. To this end, use the uniqueness of convex solutions $(V_\ep(z,t), \bar{z}_\ep(t))$ in the class $W^{1,\infty}(I \times (0,T)) \times W^{1,\infty}(0,T)$ of the following Hamilton-Jacobi equation with a constraint, due to Mirrahimi and Roquejoffre \cite{Mirrahimi2016} (for the uniqueness in $W^{1,\infty}(I \times (0,T)) \times BV(0,T)$, see also \cite{Calvez2018} and Appendix \ref{sec:D}): 
\begin{equation}\label{eq:limithjj}
\left\{
\begin{array}{ll}
\partial_t V_\ep  + |\partial_z V_\ep|^2 = H_\ep(z,t) - H_\ep(\bar{z}_\ep(t),t) &\text{ for }z \in I, \, t \in [0,T],\\
\inf_z V_\ep(\cdot,t) = 0 &\text{ for }t \in [0,T],\\
V_\ep(z,0) = V_0(z) &\text{ for }z \in I.
\end{array}
\right.
\end{equation}
Hence, we have defined the approximate trajectory $\bar{z}_\ep(t)$ and effective Hamiltonian $H_\ep(z,t)$ in terms of the quantity $\rho_\ep(x,t)$.\footnote{At this point $u_\ep(x,z,t) \approx v_\ep(x,z,t) \approx V_\ep(z,t) + \int_0^t H_\ep(\bar{z}_\ep(s),s)\,ds$. However, by the uniform positive upper and lower bounds of $\|\rho_\ep(\cdot,t)\|_{L^1(D)}$, we deduce that $\int_0^t H_\ep(\bar{z}_\ep(s),s)\,ds \to 0$ uniformly. Hence, we have $v_\ep(x,z,t) \approx V_\ep(z,t)$.}  
Now, having proved that the population has dominant trait at $\bar{z}_\ep(t)$, we can further determine that as $\ep\to 0$,
\begin{equation}\label{eq:rhoconverr}
\left| \rho_\ep(x,t) - \theta_{\bar{z}_\ep(t)}(x)\right| \to 0.
\end{equation}
Since $H_\ep(z,t)$ depends smoothly in terms of $\rho_\ep(x,t)$, \eqref{eq:rhoconverr} and the separation of timescales imply that the effective Hamiltonian $H_\ep(z,t)$ converges to the invasion exponent in \eqref{eq:invasionexp}, i.e., 
$$\left|H_\ep(z,t) -  \lambda(z,\bar{z}_\ep(t))\right| \to 0.
$$ 
Since $\lambda(z_1,z_2) = 0$ when $z_1 = z_2$, we have in fact
\begin{equation}\label{eq:Hconverr}
H_\ep(z,t) - H_\ep(\bar{z}(t),t) \approx \lambda(z,\bar{z}(t)) - \lambda(\bar{z}(t),\bar{z}(t)) = \lambda(z,\bar{z}(t)).
\end{equation}
This, in turn, allows us to pass to the limit in the constrained Hamilton-Jacobi equation \eqref{eq:limithjj}, resulting in \eqref{eq:limithj} in 
Proposition \ref{prop:traject}.

However, 
for the uniqueness argument to work on \eqref{eq:limithjj}, which is crucial in the definition of approximate trajectory $\bar{z}_\ep(t)$, we need  
$$
\partial^2_z H_\ep(\bar{z}(t),t)>0.$$
This is indeed the case in a small time interval $[0,\delta_1]$, as one can show, by the separation of time-scale between the evolutionary dynamics in $z$ and ecological dynamics of reaching the spatial equilibrium $\theta_{\bar{z}_0}$, that 
$\rho_\ep(x,t)$ does not deviate too much from $\theta_{\bar{z}_0}(x)$ in the time interval $[0,\delta_1]$. This implies the effective Hamiltonian $H_\ep(x,t)$ is in the proximity of $\lambda(z, \bar{z}_0)$, which has the relevant convexity in $z$.  This is done in Corollary \ref{cor:h2}. In this way, the argument above is valid and delivers that 
$$
H_\ep(z,t) \to \lambda(z,\bar{z}(t)) \quad \text{ and }\quad \rho_\ep(x,t) \to \theta_{\bar{z}(t)}(x) \quad \text{ for }0  \leq t  \leq  \delta_1.
$$
By carefully examining the smooth dependence of the effective Hamiltonian $H_\ep$ and corrector $\Phi_\ep$ on $z$ and $\rho_\ep$, we establish a uniform lower bound for the time-step $\delta_1$ for which the above argument can be applied. Iterating step by step in time, we can prove the convergence over the time interval $[0,T]$ for all $T>0$.

A different mutation-selection model involving a spatial variable in also studied in \cite{Jabin2016}. In the setting of that work, the rate function $u_\ep$ can be shown to be uniformly convex in $z$ {\it a priori}. This is not the case in our setting. Also, 
the quasi-steady state approximation is used in that paper, i.e., the elliptic eigenvalue problem instead of the parabolic principal bundle problem \eqref{eq:bundlee} in choosing the corrector $\Phi_\ep$. In our setting $\rho_\ep$ does not have enough {\it a priori} regularity in time, so our definition of corrector via the parabolic problem affords the needed additional time regularity.

A similar kind of result has also been obtained for the corresponding model with age structure \cite{Nordmann2020}, but with a different strategy. Therein the corrector $\Phi_\epsilon$ was defined by a couple of nonlinear mappings and the main analysis was devoted to showing the uniform boundedness of the corrector $\Phi_\epsilon$. In contrast, here we adopt a relatively direct approach by defining the corrector $\Phi_\ep$ directly to be the (bounded) 
normalized principal Floquet bundle 
for parabolic problem \eqref{eq:bundlee}. This is made possible thanks to our new {\it a priori} estimates on the quantity $\rho_\ep(x,t)$.

\subsection{Organization of the Paper}

In Section \ref{sec:2}, we establish some {\it a priori} estimates of the solution $n_\ep(x,z,t)$ and its integrated version $\rho_\ep(x,t)$. We first state the global positive upper and lower bounds, for which the proofs are postponed to Appendix \ref{sec:A}, and derive the H\"{o}lder regularity of $\rho_\ep$ and $\partial^2_{x} n_\ep$ in some appropriately rescaled variables.

In Section \ref{sec:3}, we apply the theory of normalized principal Floquet bundle for parabolic problems with Neumann boundary conditions, as developed in \cite{Cantrell2021evolution} and summarized in Appendix \ref{sec:B}, to define the effective Hamiltonian $H_\ep(z,t)$ and the corrector $\Phi_\ep(x,z,t)$, as functions of $\rho_\ep(x,t)$. We also state the uniqueness theorem, which allows us to define the approximate trajectory $\bar{z}_\ep(t)$ and pheonotypic distribution $V_\ep(z,t)$ in terms of the effective Hamiltonian $H_\ep(z,t)$ and initial distribution $V_0(z)$.

In Section \ref{sec:4}, we introduce the rate function $u_\ep(x,z,t)  = -\ep \log n_\ep(x,z,t)$ and prove several technical estimates that enable us to obtain a lower bound $\delta_1>0$ of the step size in time with which we can continue the approximate trajectory.

In Section \ref{sec:main}, we prove the main result
Theorem \ref{thm:main}.

The three appendices are devoted to the three main ingredients/tools that we develop for this singularly perturbed problem. In Appendix \ref{sec:A}, we prove the {\it a priori} $L^\infty$ estimate of $\rho_\ep(x,t)$. In Appendix \ref{sec:B}, we state the existence and differentiability of the normalized principal Floquet bundle, which is used in the construction of effective Hamiltonian and corrector $(H_\ep(z,t), \Phi_\ep(x,z,t))$.
In Appendix \ref{sec:D}, we prove the existence and uniqueness of solutions to the constrained Hamilton-Jacobi equations that we need.

\section{A priori Estimates}\label{sec:2}

We begin with the following result; see 
Proposition \ref{prop:apriori}
for the proof: 
\begin{proposition}\label{prop:apriorisum}
Let $\rho_\ep(x,t) = \int_I n_\ep(x,z,t)\,dz$, where $n_\ep$ is the solution to \eqref{eq:1.1}. Then
there exists $\widehat{C}_1$ independent of $\epsilon>0$ such that
$$\frac{1}{\widehat{C}_1} \leq \rho_\ep(x,t) \leq \widehat{C}_1\quad \text{ for all }\,\,(x,t) \in D \times [0,\infty).
$$
\end{proposition}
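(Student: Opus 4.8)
\emph{Proof plan.} That $n_\epsilon>0$ (hence $\rho_\epsilon>0$) is immediate from the strong maximum principle, since \eqref{eq:1.1} is linear in $n_\epsilon$ once $\rho_\epsilon$ is viewed as a prescribed coefficient. The first quantitative step is a uniform $L^1$-in-space bound. Set $P_\epsilon(t):=\int_D\rho_\epsilon\,dx=\int_D\int_I n_\epsilon\,dz\,dx$ and integrate \eqref{eq:1.1} over $D\times I$: the $x$-flux vanishes because $\alpha$ is independent of $x$ and $\partial_\nu n_\epsilon=0$, and the $z$-flux vanishes because $\partial_z n_\epsilon=0$ on $\partial I$, leaving $\epsilon P_\epsilon'=\int_D\rho_\epsilon(m-\rho_\epsilon)\,dx$. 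Since $|D|=1$, Jensen gives $\int_D\rho_\epsilon^2\ge P_\epsilon^2$, so $\epsilon P_\epsilon'\le\|m\|_{L^\infty}P_\epsilon-P_\epsilon^2$ and therefore $P_\epsilon(t)\le\max\{\|m\|_{L^\infty},P_\epsilon(0)\}$ for all $t\ge0$, a bound that is $\epsilon$-uniform because $\epsilon$ merely rescales time; and $P_\epsilon(0)$ is bounded uniformly in $\epsilon$ by Laplace's method applied to the WKB form of $n_{\epsilon,0}$ given by {\bf (H2)}.

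Upgrading this to the $L^\infty$ upper bound is the crux, and the main obstacle. Integrating \eqref{eq:1.1} in $z$ gives $\epsilon\partial_t\rho_\epsilon=\Delta_x q_\epsilon+\rho_\epsilon(m-\rho_\epsilon)$ with $q_\epsilon:=\int_I\alpha(z)n_\epsilon\,dz$; one has $(\inf_I\alpha)\rho_\epsilon\le q_\epsilon\le(\sup_I\alpha)\rho_\epsilon$, but $q_\epsilon$ is not a multiple of $\rho_\epsilon$ — the $z$-moment hierarchy does not close — so $\rho_\epsilon$ solves \emph{no} scalar parabolic equation; worse, $n_\epsilon$ itself is genuinely unbounded as $\epsilon\to0$ because it concentrates in $z$, so one cannot retreat to bounding $n_\epsilon$. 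This is why the obvious candidates fail: a $z$-independent supersolution cannot dominate $n_\epsilon$, because at a concentration point the spike value $n_\epsilon(x,z_0,t)$ is the $z$-maximum, hence $\ge\rho_\epsilon(x,t)$, so the spike experiences weaker self-limitation $m-\rho_\epsilon$ than a flat barrier of equal height and outgrows it; and a De~Giorgi/Moser energy estimate for $\rho_\epsilon$ alone stumbles on the cross term $\int_D\nabla_x\rho_\epsilon\cdot\nabla_x q_\epsilon$, which is not sign-definite. My plan is instead to (i) exploit that the reaction is bounded above, $\rho_\epsilon(m-\rho_\epsilon)\le\tfrac14\|m\|_{L^\infty}^2$; (ii) obtain space-time $L^p$ bounds on $\rho_\epsilon$ from the $L^1$ bound by a duality argument adapted from the $L^p$ theory of reaction--diffusion systems with controlled mass, using the backward dual problem $-\epsilon\partial_t\psi=\alpha(z)\Delta_x\psi$ whose coefficient, though $z$-dependent, is uniformly elliptic; and (iii) then treat \eqref{eq:1.1} as a linear parabolic equation for $n_\epsilon$ with potential $m-\rho_\epsilon\in L^p_{x,t}$ and bootstrap the integrability of $\rho_\epsilon=\int_I n_\epsilon\,dz$ via $L^p$--$L^q$ smoothing of the associated positivity-preserving semigroup — discarding the $\epsilon^2\partial_z^2$ term after an integration by parts in $z$ that uses the Neumann condition — until after finitely many (dimension-dependent) steps one reaches $L^\infty$. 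Constants are kept $\epsilon$-uniform by working in the rescaled time $s=t/\epsilon$ and, wherever possible, treating $z$ as a frozen parameter, so that the operative $x$-operator $\alpha(z)\Delta_x$ carries no $\epsilon$. A direct Moser/Alikakos iteration on the $n_\epsilon$-equation with test functions designed to absorb the cross terms is an alternative; in either route, closing the estimate against the $z$-averaged flux $\nabla_x q_\epsilon$ is the delicate point.

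For the lower bound we may take the upper bound $\rho_\epsilon\le\widehat C_1$ as given, so that \eqref{eq:1.1} is a linear parabolic equation for $n_\epsilon$ whose zeroth-order coefficient $m-\rho_\epsilon$ is bounded uniformly in $\epsilon$ after the rescaling $s=t/\epsilon$. I would combine: (a) a uniform positive lower bound at $t=0$, since by {\bf (H2)} the mass $\rho_\epsilon(x,0)=\int_I n_{\epsilon,0}\,dz$ concentrates near $\bar z_0\in\mathrm{Int}\,I$ through a nondegenerate Gaussian, whence $\rho_\epsilon(\cdot,0)\ge c_0>0$; (b) persistence of the population — for each $z$ the principal Neumann eigenvalue of $\alpha(z)\Delta_x+m$ is positive (it is $\ge\int_D m>0$, by testing with the constant), so while $\rho_\epsilon$ is small a suitable multiple of the corresponding principal eigenfunction is a growing subsolution, and once $\rho_\epsilon$ is of order one a constant below $m_*:=\min_D m$ is a barrier that caps further decay; (c) since $\rho_\epsilon$ may transiently overshoot $m$ somewhere — so that a constant subsolution for the $n_\epsilon$-equation alone is not available — the parabolic Harnack inequality for that linear equation to propagate positivity and forbid incipient zeros of $\rho_\epsilon$ at interior trait values. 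Iterating over successive time intervals of $\epsilon$-uniform length gives $\rho_\epsilon\ge1/\widehat C_1$ on $[0,\infty)$. In sum, the $L^1$ bound and — granted the $L^\infty$ upper bound — the lower bound are comparatively routine; the upper bound, because of the absent moment closure combined with the unboundedness of $n_\epsilon$, is where the real work lies.
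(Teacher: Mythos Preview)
Your $L^1$ bound (integrate over $D\times I$, logistic ODE for $P_\epsilon$, Laplace's method on the initial data) is exactly the paper's Step~1, and your lower-bound outline (Harnack plus an eigenvalue/persistence argument) is close in spirit to the paper's Steps~5--6, which first secure an $L^1$ lower bound by a contradiction on the same ODE and then apply the weak Harnack inequality. The substantive divergence is in the $L^\infty$ upper bound, and there your plan has a gap.

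You diagnose the obstruction correctly: $\rho_\epsilon$ satisfies no closed scalar equation, while $n_\epsilon$ itself blows up in $L^\infty$ as $\epsilon\to 0$. But the proposed cure---Pierre-type duality on $\epsilon\partial_t\rho_\epsilon=\Delta_x q_\epsilon+\rho_\epsilon(m-\rho_\epsilon)$, followed by a bootstrap on the linear $n_\epsilon$-equation ``treating $z$ as a frozen parameter'' and ``discarding $\epsilon^2\partial_z^2$ after integration by parts in $z$''---does not clearly close. Duality against $-\epsilon\partial_t\psi=\bar\alpha\Delta_x\psi$ with $\bar\alpha=q_\epsilon/\rho_\epsilon\in[\underline\alpha,\overline\alpha]$ yields at best a space--time $L^2$ (or $L^p$, $p$ near $2$) bound on $\rho_\epsilon$; to iterate up to $L^\infty$ you must feed this back into an equation, and both options fail for the reason you already named. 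On the $\rho_\epsilon$-side the closure problem recurs. On the $n_\epsilon$-side, freezing $z$ is not legitimate: since $n_\epsilon$ concentrates on a $z$-scale of order $\sqrt\epsilon$, one has $\epsilon^2\partial_z^2 n_\epsilon=O(1)$, not $o(1)$, so it cannot be thrown away as a perturbation; and integrating it out in $z$ simply returns you to $\Delta_x q_\epsilon$.

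The paper's device resolves exactly this tension by rescaling in $z$ as well as in $t$: after an even/periodic extension in $z$, set $N_\epsilon(x,y,\tau)=n_\epsilon(x,z_1+\epsilon y,t_1+\epsilon\tau)$. Then $N_\epsilon$ satisfies $\partial_\tau N_\epsilon-\alpha(z_1+\epsilon y)\Delta_x N_\epsilon-\partial_y^2 N_\epsilon\le m\,N_\epsilon$, a uniformly parabolic inequality in the full variables $(x,y,\tau)$ with $L^\infty$ coefficients and no $\epsilon$. The local maximum principle for nonnegative subsolutions (no a~priori boundedness required) gives $\sup N_\epsilon\le C\|N_\epsilon\|_{L^1}$ on nested unit-scale parabolic cylinders. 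Now integrate this over the base point $z_1\in I$: on the left one obtains $\sup_x\rho_\epsilon(x,t_1)$; on the right, undoing the rescaling and using the periodicity in $z$ collapses the $z_1$-integral so that only $\sup_{t\in[t_1-\epsilon,t_1]}\|\rho_\epsilon(\cdot,t)\|_{L^1(D)}$ survives. Hence $\|\rho_\epsilon(\cdot,t_1)\|_{L^\infty(D)}\le C\sup_{t\in[t_1-\epsilon,t_1]}\|\rho_\epsilon(\cdot,t)\|_{L^1(D)}$ for every $t_1\ge\epsilon$, in one step and with no bootstrap. The simultaneous $z$-rescaling---turning the degenerate $\epsilon^2\partial_z^2$ into an honest unit Laplacian---is precisely the idea missing from your plan. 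The paper then handles the initial layer $t\in[0,\epsilon]$ separately (your plan does not) by building barriers for $u_\epsilon=-\epsilon\log n_\epsilon$ directly from the WKB initial data in \textbf{(H2)}; the same $\sup\rho_\epsilon\le C\|\rho_\epsilon\|_{L^1}$ estimate is also reused in the $L^1$ lower-bound contradiction.
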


In the following, we extend $n_\ep(x,z,t)$ evenly and then periodically in
$z$. Due to the Neumann boundary condition, the extended $n_\ep$ satisfies the same PDE with coefficients similarly extended to $D \times  \mathbb{R} \times [0,\infty)$. Next, 
we rescale $n_\ep$. Define, for $z_1 \in I$ and $t_1 \geq \ep$, 
$$
N_\ep(x,y,\tau)=N_\ep(x,y,\tau; z_1, t_1) := n_\ep(  x, z_1 + \ep y,t_1 + \ep \tau),
$$
and note that $N_\ep(x,y,\tau)$ satisfies a linear parabolic equation 
\begin{equation}\label{eq:Nep}
\partial_\tau N_\ep - \alpha(z_1 + \ep y) \Delta_x N_\ep - \partial^2_y N_\ep = N_\ep \big(m(x) - \rho_\ep(x, t_1 +\ep \tau)\big).
\end{equation}
By Proposition \ref{prop:apriorisum}, the above equation has $L^\infty$ bounded coefficients, so we may apply local parabolic $L^p$ estimates to obtain H\"{o}lder regularity of $N_\ep$, which then allows us to use parabolic Schauder estimates to estimate $\partial^2_x N$. Here $\partial^2_x N$ denotes all second order partial derivatives in space $\partial^2_{x_i x_j} N$. 

\begin{lemma}\label{lem:localholder}
For $\beta \in (0,1)$,  there exists $C>0$ independent of $\ep$ such that
$$
\|\partial^2_x N_\ep(x,y,\tau)\|_{C^{\beta, \beta, \beta/2 }(  \bar D  \times[-\frac{1}{2},\frac{1}{2}] \times [-\frac{1}{2},0])} \leq C \|N_\ep\|_{L^1( D \times (-1,1) \times (-1,0) )}.
$$
In particular, there exists a constant $C$ independent of $t \geq \ep$ and $z\in I$ such that
\begin{equation}\label{eq:localholder2}
\sup_{x \in D} |\partial^2_x n_\ep(x,z,t)| \leq C   \fint_{t-\ep}^{t}\fint_{-\ep}^{\ep} \int_D n_\ep(x',z+z',t')\,dx'dz'dt'\,.
\end{equation}
Here we use the notation $\fint_{s_1}^{s_2}  = \frac{1}{s_2-s_1} \int_{s_1}^{s_2}$, for any $s_1 < s_2$.
\end{lemma}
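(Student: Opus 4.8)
The plan is to treat the rescaled equation~\eqref{eq:Nep} as a uniformly parabolic equation in the $N+1$ variables $(x,y)$ and to bootstrap in two stages. The operator $\mathcal{L}_\ep := \partial_\tau - \alpha(z_1+\ep y)\Delta_x - \partial_y^2$ has ellipticity constants $\min\{1,\inf_I\alpha\}$ and $\max\{1,\sup_I\alpha\}$ by \textbf{(H1)}, and by \textbf{(M)} together with Proposition~\ref{prop:apriorisum} the zeroth-order coefficient $c_\ep(x,\tau):=m(x)-\rho_\ep(x,t_1+\ep\tau)$ is bounded in $L^\infty$ by a constant independent of $\ep,z_1,t_1$. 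Write $Q_r := D\times(-r,r)\times(-r,0)$ in the variables $(x,y,\tau)$; every estimate below uses a cylinder contained in $Q_1$, and since $t_1\ge\ep$ the corresponding original time interval $(t_1-\ep,t_1)$ lies in $[0,\infty)$, so \eqref{eq:Nep} is genuinely available there. The first stage uses only the $L^\infty$ bounds to promote $N_\ep\in L^1$ to uniform H\"older continuity; the second feeds this back so that $c_\ep$ becomes H\"older and Schauder estimates apply.

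\emph{Stage 1 (H\"older continuity of $N_\ep$).} Since $n_\ep\ge 0$ and hence $N_\ep\ge 0$, and $\mathcal{L}_\ep N_\ep = c_\ep N_\ep$ with $c_\ep$ bounded, the function $w:=e^{-\|m\|_{L^\infty(D)}\tau}N_\ep$ is a nonnegative subsolution of $\mathcal{L}_\ep w=0$ with Neumann data on $\partial D$. The parabolic local maximum principle for subsolutions with bounded measurable coefficients (Moser iteration, valid up to a Neumann boundary) gives $\sup_{Q_{3/4}}N_\ep\le C\|N_\ep\|_{L^1(Q_1)}$, with $C$ depending only on $N$, $D$, the ellipticity bounds and $\|m\|_{L^\infty(D)}$. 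Then $c_\ep N_\ep\in L^\infty(Q_{3/4})$ with the same control, and the interior-and-Neumann-boundary De Giorgi--Nash--Moser theorem yields a universal $\gamma\in(0,1)$ with $\|N_\ep\|_{C^{\gamma,\gamma,\gamma/2}(Q_{2/3})}\le C\|N_\ep\|_{L^1(Q_1)}$, uniformly in $\ep,z_1,t_1$.

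\emph{Stage 2 (Schauder) and the pointwise bound.} Applying Stage~1 with base points $(z',t_1)$, $z'\in I$, shows that $x\mapsto n_\ep(x,z',t)$ is uniformly $C^\gamma$ and $\tau\mapsto n_\ep(x,z',t_1+\ep\tau)$ uniformly $C^{\gamma/2}$ on the relevant ranges; integrating over $z'\in I$ (recall $|I|=1$), the map $(x,\tau)\mapsto\rho_\ep(x,t_1+\ep\tau)$ is bounded in $C^{\gamma,\gamma/2}(\bar D\times[-2/3,0])$ uniformly in $\ep,z_1,t_1$. Combined with $m\in C^\beta(\bar D)$ and the $C^\gamma$-bound on $N_\ep$, the right-hand side $g_\ep:=c_\ep N_\ep$ is bounded in $C^{\gamma',\gamma',\gamma'/2}(Q_{2/3})$ by $C\|N_\ep\|_{L^1(Q_1)}$, where $\gamma':=\min\{\beta,\gamma\}$. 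Since the only non-constant coefficient of $\mathcal{L}_\ep$, namely $\alpha(z_1+\ep y)$, is bounded in $C^1$ uniformly in $\ep$ (because $\alpha\in C^\infty(\bar I)$ and $|\ep y|\le 1$), interior parabolic Schauder estimates together with Neumann boundary Schauder estimates near $\partial D$ give
\[
\|N_\ep\|_{C^{2+\gamma',2+\gamma',1+\gamma'/2}(Q_{1/2})}\le C\big(\|g_\ep\|_{C^{\gamma'}(Q_{2/3})}+\|N_\ep\|_{C^0(Q_{2/3})}\big)\le C\|N_\ep\|_{L^1(Q_1)}.
\]
If $\gamma'<\beta$, one more bootstrap — now $N_\ep$, hence $n_\ep$ and (after integration) $\rho_\ep$, is $C^{2+\gamma'}\subset C^\beta$ in $x$ and $C^{1+\gamma'/2}\subset C^{\beta/2}$ in the $\ep$-rescaled time, so $g_\ep\in C^{\beta,\beta,\beta/2}$ — upgrades the exponent to $\beta$, giving the first assertion. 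For \eqref{eq:localholder2}, take $(z_1,t_1)=(z,t)$ with $t\ge\ep$ and evaluate at $(y,\tau)=(0,0)$: $\sup_{x\in D}|\partial_x^2 n_\ep(x,z,t)|=\sup_{x\in D}|\partial_x^2 N_\ep(x,0,0)|\le\|\partial_x^2 N_\ep\|_{C^0}\le C\|N_\ep\|_{L^1(D\times(-1,1)\times(-1,0))}$; the change of variables $z'=z+\ep y$, $t'=t+\ep\tau$ identifies $\|N_\ep\|_{L^1(D\times(-1,1)\times(-1,0))}=2\fint_{t-\ep}^t\fint_{-\ep}^\ep\int_D n_\ep(x',z+z'',t')\,dx''dz''dt'$, and absorbing the factor $2$ into $C$ gives \eqref{eq:localholder2}.

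\emph{Main obstacle.} The only genuine difficulty is the apparent circularity: the Schauder step needs $m-\rho_\ep$ to be H\"older, yet $\rho_\ep$ is part of the unknown. This is resolved by Stage~1, which extracts uniform H\"older continuity of $N_\ep$ from nothing more than the $\ep$-independent $L^\infty$ bound of $\rho_\ep$ in Proposition~\ref{prop:apriorisum}, after which $\rho_\ep=\int_I n_\ep\,dz$ inherits the needed regularity. The remaining (routine) care concerns the uniformity of all constants in $\ep$, which rests on that same $L^\infty$ bound and on the fact that the rescaling only improves the regularity of the principal coefficient $\alpha(z_1+\ep y)$, together with the use of the Neumann-boundary versions of the De Giorgi--Nash--Moser and Schauder estimates up to $\partial D$.
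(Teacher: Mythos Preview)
Your proof is correct and follows essentially the same two-stage strategy as the paper: first obtain H\"older continuity of $N_\ep$ using only the $L^\infty$ bound on $\rho_\ep$ from Proposition~\ref{prop:apriorisum}, then integrate over $z_1\in I$ to deduce H\"older continuity of $\rho_\ep(\cdot,t_1+\ep\cdot)$, and finally apply Schauder. The only difference is in the tool used for Stage~1: the paper uses the local parabolic $L^p$ estimate together with Sobolev embedding (and the local maximum principle to pass from $L^1$ to $L^p$), which lands directly in $C^{\beta}$ for the prescribed $\beta$, whereas you use De Giorgi--Nash--Moser to get $C^\gamma$ for some universal $\gamma$ and then run one extra Schauder bootstrap to reach $C^\beta$; both routes are standard and yield the same conclusion.
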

\begin{proof}
By Proposition \ref{prop:apriorisum}, $\displaystyle  \sup_{t \geq 0} \|\rho_\ep(\cdot,t)\|_{C(\bar D)} \leq C$. Hence, the equation \eqref{eq:Nep} has $L^\infty$ bounded coefficients. So we may apply local parabolic $L^p$ estimates to obtain H\"{o}lder regularity of $N_\ep$. To this end, 
define $\Omega_{3/5} \subset \Omega_{4/5} \subset \Omega_1$ by 
$$
\Omega_R :=   D \times \left(-R,R\right)\times \left(-R,0\right) , \quad \text{ for }R = \frac{3}{5}, \frac{4}{5}\,\text{ and }\, 1.
$$ Then, recalling \eqref{eq:Nep}, we have 
\begin{equation}\label{eq:Nstep2}
\|{{N_\ep}}\|_{C^{\beta, \beta,\beta/2}(\overline{ \Omega_{3/5}})} \leq C \|N_\ep||_{W^{2,2,1}_{p}(\Omega_{3/5})} \leq C \|N_\ep\|_{L^p(\Omega_{4/5})} \leq C \|N_\ep\|_{L^1(\Omega_{1})},
\end{equation}
where the first, second and third inequality follows, respectively, from the Sobolev embedding, parabolic $L^p$ estimate, and \eqref{eq:localmaxprin}.

Integrating \eqref{eq:Nstep2} over $z_1 \in I$, we have, for each $t_1 \geq \ep$,
\begin{align}
\|\rho_\ep(\cdot~,t_1 + \ep~\cdot~)\|_{W^{2,1}_p( {D}\times \left[ -\frac{3}{5},0\right])} &\leq \int_I \|N_\ep(\cdot,0,\cdot \,; t_1,z_1)\|_{W^{2,1}_p( {D} \times [ -\frac{3}{5},0] )}  \,dz_1  \notag\\ 
&\leq \int_I \|N_\ep(\cdot,0,\cdot\,;t_1,z_1)\|_{L^1(D \times [-1,0])}\,dz_1 \notag\\
&\leq C\| \rho_\ep(\cdot, t_1 + \ep \cdot)\|_{L^1(D \times [-1,0])}\leq C \label{eq:w21p}
\end{align}
where we used the fact that $\rho_\ep(x,t_1+\ep t) = \int_I N_\ep(x,0,t;t_1,z_1)\,dz_1$ for the first and third inequalities, \eqref{eq:Nstep2} in the second inequality, and Proposition \ref{prop:apriori} for the last inequality. 
Similarly, for each $t_1 \geq \ep$ we have
$$
\|\rho_\ep(\cdot~,t_1 + \ep~\cdot~)\|_{C^{\beta, \beta/2}( \overline{D}\times \left[ -\frac{3}{5},0\right])} \leq \int_I \|N_\ep(\cdot,0,\cdot ; t_1,z_1)\|_{C^{\beta, \beta/2}( \overline{D} \times [ -\frac{3}{5},0] )}  \,dz_1 \leq C.
$$

Now that $\rho_\ep(\cdot, t_1 + \ep \cdot)$ is H\"{o}lder continuous, we may apply parabolic Schauder estimates to \eqref{eq:Nep} to get
\begin{equation}\label{eq:Nstep4}
\|\partial^2_x N_\ep(x,y,\tau)\|_{C^{\beta, \beta,\beta/2}\left(\overline{D}\times \left[ -\frac{1}{2},-\frac{1}{2}\right]\times \left[ -\frac{1}{2},0\right]\right)} \leq C \|N_\ep\|_{C^{\beta, \beta,\beta/2}(\overline{\Omega_{3/5}})}.
\end{equation}
The lemma follows from combining \eqref{eq:Nstep2} and \eqref{eq:Nstep4}.
\end{proof}

The following result follows from the proof of Lemma \ref{lem:localholder}: 
\begin{corollary}\label{cor:apriorisum}
\begin{itemize}
\item[{\rm(a)}] For each $p>1$, there exists some  $\widehat{C}_p>0$ independent of $\epsilon$ such that 
$$
\sup_{\tau_0 \geq 1} \|\widetilde\rho_\ep(\cdot,\cdot)\|_{W^{2,1}_p(D \times [\tau_0,\tau_0+1])} \leq \widehat{C}_p, \quad \text{ where }\quad \widetilde\rho_\ep(x,\tau) := \rho_\ep(x,\epsilon \tau). 
$$

\item[{\rm(b)}] For each $\beta'\in (0,1)$, there exists some  $C_{\beta'}>0$
independent of $\epsilon$ such that
$$
\|\widetilde\rho_\ep\|_{C^{\beta',\beta'/2}(\bar D \times [1,\infty))} \leq C_{\beta'}.
$$ 
In particular, we have $\sup\limits_{t \geq \ep} \|\rho_\ep(\cdot,t)\|_{C^{\beta'}(\bar D)} \leq C_{\beta'}.$
\end{itemize}
\end{corollary}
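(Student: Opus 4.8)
The statement to prove is Corollary \ref{cor:apriorisum}, which asserts uniform-in-$\epsilon$ parabolic estimates on the rescaled total population $\widetilde\rho_\ep(x,\tau) = \rho_\ep(x,\epsilon\tau)$: a $W^{2,1}_p$ bound on unit time-windows $[\tau_0,\tau_0+1]$ with $\tau_0 \geq 1$, and a corresponding parabolic Hölder bound $C^{\beta',\beta'/2}$ on $\bar D \times [1,\infty)$.

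My plan is to extract both conclusions directly from the intermediate estimates established inside the proof of Lemma \ref{lem:localholder}, simply by reorganizing and translating back through the rescaling. First I would observe that, by definition, $\widetilde\rho_\ep(x,\tau) = \rho_\ep(x,\epsilon\tau)$, and that the estimate \eqref{eq:w21p}, which reads $\|\rho_\ep(\cdot, t_1 + \ep\cdot)\|_{W^{2,1}_p(D \times [-3/5,0])} \leq C$ uniformly in $t_1 \geq \epsilon$, is exactly a statement about $\widetilde\rho_\ep$ after writing $t_1 = \epsilon\tau_1$. Indeed $\rho_\ep(x, t_1 + \epsilon\sigma) = \rho_\ep(x, \epsilon(\tau_1 + \sigma)) = \widetilde\rho_\ep(x, \tau_1+\sigma)$, so \eqref{eq:w21p} gives $\|\widetilde\rho_\ep\|_{W^{2,1}_p(D \times [\tau_1 - 3/5, \tau_1])} \leq C$ for every $\tau_1 \geq 1$. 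To obtain the stated form with windows $[\tau_0, \tau_0+1]$, I would cover the unit interval $[\tau_0,\tau_0+1]$ by finitely many (two suffices) overlapping intervals of the form $[\tau_1 - 3/5, \tau_1]$ with $\tau_1 \geq 1$, and add up the local estimates; this yields $\widehat{C}_p$ depending only on $p$, $D$, $m$, and $\widehat{C}_1$ from Proposition \ref{prop:apriorisum}, but not on $\epsilon$ or $\tau_0$. Part (a) follows.

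For part (b), the same translation applies to the Hölder estimate displayed just after \eqref{eq:w21p}, namely $\|\rho_\ep(\cdot, t_1 + \ep\cdot)\|_{C^{\beta,\beta/2}(\bar D \times [-3/5,0])} \leq C$. Rewriting with $t_1 = \epsilon\tau_1$ gives $\|\widetilde\rho_\ep\|_{C^{\beta,\beta/2}(\bar D \times [\tau_1-3/5,\tau_1])} \leq C$ for all $\tau_1 \geq 1$. Since parabolic Hölder seminorms are controlled on a union of overlapping time-intervals by the maximum of the seminorms on the pieces together with the $L^\infty$ bound (which is uniform by Proposition \ref{prop:apriorisum}), patching over $\tau_1 \geq 1$ produces a global bound $\|\widetilde\rho_\ep\|_{C^{\beta',\beta'/2}(\bar D \times [1,\infty))} \leq C_{\beta'}$ for any exponent $\beta' \in (0,1)$ (noting $\beta$ in Lemma \ref{lem:localholder} was arbitrary in $(0,1)$, so one simply takes $\beta = \beta'$). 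Finally, undoing the rescaling, $\|\rho_\ep(\cdot,t)\|_{C^{\beta'}(\bar D)} = \|\widetilde\rho_\ep(\cdot, t/\epsilon)\|_{C^{\beta'}(\bar D)} \leq C_{\beta'}$ for all $t \geq \epsilon$, since $t/\epsilon \geq 1$; this gives the last assertion.

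The only subtle point — and the one I would be most careful about — is the patching of Hölder seminorms across time: one must check that the time-Hölder seminorm of $\widetilde\rho_\ep$ between two points $\tau, \tau'$ lying in different subintervals is still bounded, which follows because if $|\tau - \tau'| \leq 1$ they share a common covering interval (after choosing the overlap of the cover to have length at least $1$, e.g. intervals $[\tau_1 - 3/5, \tau_1]$ with consecutive $\tau_1$'s spaced by $\leq 3/10$), while if $|\tau - \tau'| > 1$ one bounds $|\widetilde\rho_\ep(x,\tau) - \widetilde\rho_\ep(x,\tau')| \leq 2\|\widetilde\rho_\ep\|_{L^\infty} \leq 2\widehat{C}_1 \leq 2\widehat{C}_1 |\tau-\tau'|^{\beta'/2}$. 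The same dichotomy handles the spatial seminorm trivially since $D$ is fixed. No genuinely new estimate is needed; the corollary is a bookkeeping consequence of Lemma \ref{lem:localholder}'s proof, which is precisely why the excerpt states it as following "from the proof."
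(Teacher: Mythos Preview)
Your proposal is correct and matches exactly what the paper intends: the corollary is just a restatement of the estimates \eqref{eq:w21p} and the displayed H\"older bound immediately following it, translated via $t_1=\epsilon\tau_1$ and patched over unit time-windows. One small arithmetic slip: intervals of length $3/5$ cannot have overlap of length at least $1$, so in your dichotomy for the time-H\"older seminorm the threshold should be the overlap length (e.g.\ $3/10$) rather than $1$; the argument then goes through verbatim with the constant adjusted by a factor $(10/3)^{\beta'/2}$.
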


This result requires an initial delay of order $\ep$ so as to take into account the possible initial layer on $\rho_\ep$. This is responsible for the technical issues on the initial data that we encounter in the next section.

\section{Approximate Trajectory via the Normalized Principal Floquet Bundle}\label{sec:3}

Our next and fundamental task is to define the corrector $\varphi_\ep$, closely related to $\log \Phi_\ep$ and effective Hamiltonian $H_\ep$ in terms of the principal bundle of certain parabolic problem with potential $m - \rho_\ep$. This, in turn, enables us to define an approximate trajectory $\bar{z}_\ep(t)$ of the dominant trait.


\begin{proposition}\label{prop:A3}
For each fixed $\ep>0$ and $z \in I$, there exists a unique classical solution $(\varphi_\ep(x,z,t),H_\ep(z,t))$ to the following linear parabolic problem in $D\times (-\infty,\infty)$: 
\begin{equation}\label{eq:hamiltonian}
\left\{
\begin{array}{ll}
\ep \partial_t \varphi_\ep - \alpha(z)
\Delta_x
\varphi_\ep + \alpha(z)|
\nabla_x\varphi_\ep|^2 + m(x) - \rho_\ep(x,\max\{t,\ep\})&\\
  \qquad  + H_\ep(t;z)=0 &  \hspace{-1cm}\quad x \in D, t\in \mathbb{R},\\
\partial_\nu \varphi_\ep = 0 &\hspace{-1cm}\quad x \in \partial D, t\in \mathbb{R},\\
\int_D e^{- \varphi_\ep(x,t;z)}\,dx = 1 &\hspace{-1cm}\quad  t\in  \mathbb{R}.
\end{array}
\right.
\end{equation}
Moreover, the quantities $(\varphi_\ep(x,t;z), H_\ep(z,t))$ depend smoothly on $z \in I$, i.e., for some constant $\widetilde{C}_0$ independent of $\ep$, 
\begin{equation}\label{eq:uniformbounds}
\max\limits_{i=0,1,2,3} \|\partial_z^iH_\ep(t;z)\|_\infty + \|
\nabla_x\varphi_\ep(x,t;z)\|_\infty + \max_{i=0,1,2}\|\partial^i_z \varphi_\ep(x,t;z)\|_\infty  \leq \widetilde{C}_0,
\end{equation}
where $\| \cdot \|_\infty$ denotes the $L^\infty$ norm over $(x,z,t) \in D \times I \times \mathbb{R}$. 
\end{proposition}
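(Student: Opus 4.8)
The plan is to construct the pair $(\varphi_\ep, H_\ep)$ as the normalized principal Floquet bundle of the linear parabolic operator $L_z := \ep\partial_t - \alpha(z)\Delta_x + (m(x) - \rho_\ep(x,\max\{t,\ep\}))$ with Neumann boundary conditions on $D$, using the general theory summarized in Appendix~\ref{sec:B}. First I would observe that, after the Hopf--Cole substitution $\Phi_\ep = e^{-\varphi_\ep}$, the nonlinear problem \eqref{eq:hamiltonian} becomes the linear principal-bundle problem $\ep\partial_t\Phi_\ep - \alpha(z)\Delta_x\Phi_\ep = (m(x) - \rho_\ep(x,\max\{t,\ep\}) + H_\ep(z,t))\Phi_\ep$ together with $\int_D \Phi_\ep\,dx = 1$; this is exactly \eqref{eq:bundlee} up to the freezing of $\rho_\ep$ at time $\ep$ for $t < \ep$. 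For each fixed $\ep > 0$ and $z \in I$, the coefficient $m(x) - \rho_\ep(x,\max\{t,\ep\})$ is, by Proposition~\ref{prop:apriorisum} and Corollary~\ref{cor:apriorisum}(b), bounded in $L^\infty(D \times \mathbb{R})$ and Hölder continuous in space uniformly in time for $t \geq \ep$ (and constant in $t$ for $t \leq \ep$), so the hypotheses of the principal-bundle existence theorem apply. That theorem yields a unique positive eternal solution $\Phi_\ep > 0$ normalized by $\int_D \Phi_\ep\,dx = 1$ and a unique $H_\ep(z,t)$; positivity of $\Phi_\ep$ and the Harnack inequality give $\varphi_\ep = -\log\Phi_\ep \in L^\infty$, and elliptic/parabolic interior estimates applied to the equation for $\Phi_\ep$ give the gradient bound $\|\nabla_x\varphi_\ep\|_\infty \leq \widetilde C_0$ and a corresponding $L^\infty$ bound on $H_\ep(z,t)$ (the latter also follows by integrating the equation \eqref{eq:hamiltonian} over $D$, using $\int_D e^{-\varphi_\ep}\,dx = 1$ and the bounds on $m - \rho_\ep$ and $\nabla_x\varphi_\ep$).

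Next I would establish smooth dependence on the parameter $z$. The idea is to differentiate the system \eqref{eq:hamiltonian} formally in $z$: writing $\psi := \partial_z\varphi_\ep$ and $h := \partial_z H_\ep$, one gets a linear parabolic equation for $\psi$ of the form
\begin{equation}
\ep\partial_t\psi - \alpha(z)\Delta_x\psi + 2\alpha(z)\nabla_x\varphi_\ep\cdot\nabla_x\psi + h = -\alpha'(z)\big(-\Delta_x\varphi_\ep + |\nabla_x\varphi_\ep|^2\big),
\end{equation}
coupled with the Neumann condition $\partial_\nu\psi = 0$ and the normalization $\int_D e^{-\varphi_\ep}\psi\,dx = 0$ obtained by differentiating $\int_D e^{-\varphi_\ep}\,dx = 1$. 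Note that the right-hand side is controlled in $L^\infty$ once one has the already-established bounds on $\nabla_x\varphi_\ep$ and a bound on $\Delta_x\varphi_\ep$ (which comes from parabolic Schauder estimates using the spatial Hölder regularity of $\rho_\ep$ from Corollary~\ref{cor:apriorisum}(b)). The linear operator governing $\psi$ is again of principal-bundle type — it is the linearization, and its principal eigenvalue is $0$ with principal eigenfunction the constant (reflecting that $\Phi_\ep$ itself is the principal bundle of the un-differentiated problem) — so the solvability condition for $h$ and the unique solvability for $\psi$ in the orthogonality class follow from the Fredholm-type alternative established in Appendix~\ref{sec:B}, together with uniform bounds. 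Iterating this differentiation twice more gives the bounds on $\partial_z^2\varphi_\ep$ and on $\partial_z^i H_\ep$ for $i = 1,2,3$, at each stage the right-hand side being controlled by the bounds obtained at the previous stage plus the Hölder regularity of $\rho_\ep$. Rigorously, rather than differentiating formally, I would set up difference quotients in $z$, derive uniform estimates, and pass to the limit to justify the differentiability; alternatively one invokes the implicit-function-theorem / analytic-dependence version of the principal-bundle theorem if it is available in Appendix~\ref{sec:B}.

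The main obstacle is obtaining the \emph{uniform-in-$\ep$} bounds in \eqref{eq:uniformbounds}, as opposed to bounds that merely depend on $\ep$. The delicate point is that the time variable in \eqref{eq:hamiltonian} appears with the small coefficient $\ep$, so the natural parabolic scaling is $t = \ep\tau$; in the $\tau$ variable the equation for $\widetilde\varphi_\ep(x,\tau;z) := \varphi_\ep(x,\ep\tau;z)$ becomes a parabolic equation with $O(1)$ coefficients whose potential is $m(x) - \widetilde\rho_\ep(x,\max\{\tau,1\})$, and here Corollary~\ref{cor:apriorisum} gives precisely the $\ep$-independent $W^{2,1}_p$ and $C^{\beta',\beta'/2}$ bounds on $\widetilde\rho_\ep$ on unit time-windows $[\tau_0,\tau_0+1]$ that are needed to run uniform Harnack, parabolic $L^p$ and Schauder estimates for the bundle and its $z$-derivatives. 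Thus the key is to carry out the entire construction in the rescaled time variable $\tau$, where all a priori estimates of Section~\ref{sec:2} are stated uniformly in $\ep$, and only at the end translate back to the original time $t = \ep\tau$; the spatial and $z$-regularity estimates are then $\ep$-independent because they come from the rescaled equation. I would also need to check that the freezing of $\rho_\ep$ at $\tau = 1$ for $\tau \leq 1$ does not destroy the eternal-solution construction — but since the frozen potential is simply time-independent for $\tau \leq 1$, the bundle on $(-\infty,1]$ is just the elliptic principal eigenpair, which matches up $C^1$ in time with the bundle on $[1,\infty)$ by uniqueness, so this causes no difficulty.
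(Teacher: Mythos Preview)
Your proposal is correct and follows essentially the same approach as the paper: rescale time via $\tau = t/\ep$, observe that the coefficient $c_\ep(x,\tau) = m(x) - \widetilde\rho_\ep(x,\max\{\tau,1\})$ is uniformly in $C^{\beta,\beta/2}(\bar D\times\mathbb{R})$ by Corollary~\ref{cor:apriorisum}(b), and invoke the existence and smooth-dependence theorems for the normalized principal Floquet bundle from Appendix~\ref{sec:B}. The only difference is one of packaging: the paper simply cites Proposition~\ref{prop:A2} (from \cite{Cantrell2021evolution}) as a black box for the $z$-smoothness and uniform bounds, whereas you sketch the underlying differentiation/Fredholm argument; also, your separate ``matching'' discussion at $\tau=1$ is unnecessary in the paper's treatment, since $c_\ep$ is globally $C^{\beta,\beta/2}$ and the eternal-bundle theorem (Theorem~\ref{thm:A1}) is applied directly on all of $\mathbb{R}$.
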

\begin{proof}
For $x \in D$ and $\tau \in \mathbb{R}$, define  $\displaystyle
c_\ep(x,\tau) := m(x) - \widetilde{\rho}_\ep(x, \max\{\tau,1\})$, 
where  $\widetilde\rho_\ep(x,\tau) = \rho_\ep(x,\epsilon \tau)$ as in Corollary~\ref{cor:apriorisum},
then $\|c_\ep\|_{C^{\beta,\beta/2}(\bar D \times \mathbb{R})}$ is uniformly bounded in $\ep$ thanks to  Corollary~\ref{cor:apriorisum}(b). By Theorem~\ref{thm:A1}, we can define the corresponding normalized principal Floquet bundle $$(\Phi_1(x,\tau;c_\ep,z), H_1(\tau;{c_\ep},z))  \in C^{2+\beta,1+\beta/2}(\bar D\times \mathbb{R})\times C^{\beta/2}(\mathbb{R}),$$
which satisfies
$$
\begin{cases}
\partial_\tau \Phi_1 - \alpha(z) \Delta_x \Phi_1 = (m(x) - \rho_\ep(x,\epsilon\max\{\tau,1\})\Phi_1  & \text{ for }x \in D,~t\in\mathbb{R},\\
\partial_\nu \Phi_1 = 0 &\text{ for }x \in \partial D,~ t \in \mathbb{R},\\
\Phi_1 >0  \quad  \text{ for }x \in D,~t\in\mathbb{R},\quad \text{ and }\quad 
\int_D \Phi_1\,dx = 1 &\text{ for }t\in\mathbb{R}.
\end{cases}
$$
Setting
$$
\varphi_\ep(x,z,t) := -\log \Phi_1(x,t/\ep;c_\ep,z) \quad \text{ and }\quad H_\ep(z,t) := H_1(t/\ep;c_\ep,z),
$$
we obtain $(\varphi_\ep,H_\ep)$ satisfying \eqref{eq:hamiltonian}.
The smoothness follows from Proposition~\ref{prop:A2}. 
\end{proof}

We define the approximate trajectory by solving a constrained Hamilton-Jacobi equation by making use of uniqueness results under convexity assumption~\cite{Mirrahimi2016}. The proof is contained in Appendix \ref{sec:D}.

\begin{proposition}\label{prop:traject2}
Suppose, for some $T>0$, it holds that
\begin{equation}\label{eq:3.6.2}
\liminf_{\epsilon \to 0} \left[\inf_{I \times [2\sqrt\ep, T]} 
\partial^2_{z}H_\ep(z,t) \right]>0.
\end{equation} 
Then for all $\epsilon>0$ small, there exists a unique viscosity solution $(V_\ep(z,t),\bar{z}_\ep(t))$ to the following Hamilton-Jacobi equation with a constraint:
\begin{equation}\label{eq:approxhj}
\hspace{-.5cm}\left\{
\begin{array}{ll}
\partial_t V_\ep + |\partial_z V_\ep|^2 - H_\ep(z,t) + H_\ep(\bar{z}_\ep(t),t) =0&\text{ for }t \in [2\sqrt\ep,T],\,z \in I, \\
\partial_z V_\ep =0 &\text{ for }t \in [2\sqrt\ep,T],\,z \in \partial I,\\
{ \bar{z}_\ep({2\sqrt{\ep})}} = \bar{z}_0,\quad \text{ and }\quad V_\ep(z,2\sqrt\ep) = V_0(z) &\text{ for }z \in I,\\
\inf_{z \in I} V_\ep(z,t) = 0 &\text{ for }t\in [{2\sqrt\ep},T].
\end{array} \right.
\end{equation}
Moreover, 
$$
V_\ep(\bar{z}_\ep(t),t)=0, \quad\text{ and }\quad V_\ep(z,t) >0 \quad \text{ for }z \neq \bar{z}_\ep(t),
$$
and 
there exists a constant $C>0$ independent of $\epsilon$ such that 
\begin{equation}\label{eq:liptraj}
\|\bar{z}_\ep(\cdot)\|_{C^{0,1}([2\sqrt\ep,T])} \leq C.
\end{equation}
\end{proposition}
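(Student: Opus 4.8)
The plan is to deduce Proposition~\ref{prop:traject2} by invoking the abstract uniqueness theory for constrained Hamilton-Jacobi equations (the Mirrahimi--Roquejoffre result, or its version in Appendix~\ref{sec:D}) applied to the Hamiltonian $H_\ep(z,t)$ produced by Proposition~\ref{prop:A3}. The key point is that all the structural hypotheses needed by that abstract theory are furnished by the two preceding results: Proposition~\ref{prop:A3} gives uniform $C^0$ through $C^3$ bounds on $z\mapsto H_\ep(z,t)$ (and, via the parabolic regularity of $\rho_\ep$ from Corollary~\ref{cor:apriorisum}, enough H\"older regularity in $t$), while the standing assumption~\eqref{eq:3.6.2} supplies the crucial \emph{uniform convexity} $\partial_z^2 H_\ep(z,t)\ge \kappa_0>0$ on $I\times[2\sqrt\ep,T]$ for all small $\ep$. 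So the bulk of the proof is a verification that we are in the hypotheses of the cited theorem, not a new construction.

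Concretely, I would proceed as follows. First, fix $T>0$ and, using~\eqref{eq:3.6.2}, choose $\ep_0>0$ and $\kappa_0>0$ so that $\inf_{I\times[2\sqrt\ep,T]}\partial_z^2 H_\ep\ge\kappa_0$ for all $\ep<\ep_0$; combine this with~\eqref{eq:uniformbounds} to record that $H_\ep$ is uniformly (in $\ep$) $C^3$ in $z$ with a uniform convexity lower bound, and uniformly H\"older in $t$ (shift the time origin to $2\sqrt\ep$ so the problem is posed on $[0,T-2\sqrt\ep]$ with initial datum $V_0$). Second, observe that the initial datum $V_0$ is, by~\textbf{(H2)}, smooth, nonnegative, uniformly convex ($\partial_z^2 V_0>2K_0$), and attains its minimum value $0$ at an interior point $\bar z_0$; this is exactly the class of admissible initial data for the constrained problem. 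Third, apply the uniqueness theorem of Appendix~\ref{sec:D} (cf.~\cite{Mirrahimi2016,Calvez2018}): it yields, for each small $\ep$, a unique viscosity solution $(V_\ep,\bar z_\ep)\in W^{1,\infty}(I\times[2\sqrt\ep,T])\times W^{1,\infty}([2\sqrt\ep,T])$ of~\eqref{eq:approxhj}, with the constraint $\inf_z V_\ep(\cdot,t)=0$ realized at $z=\bar z_\ep(t)$, the strict positivity $V_\ep(z,t)>0$ for $z\ne\bar z_\ep(t)$, and $\bar z_\ep(2\sqrt\ep)=\bar z_0$. The convexity of $V_0$ propagates (the Hamiltonian $p\mapsto|p|^2$ is convex and $H_\ep$ is uniformly convex in $z$), so $V_\ep(\cdot,t)$ stays uniformly convex, which is what makes the minimizer $\bar z_\ep(t)$ unique and forces $\partial_z V_\ep(\bar z_\ep(t),t)=0$ together with the bound $V_\ep(z,t)>0$ away from $\bar z_\ep(t)$; I would also note $\bar z_\ep(t)$ stays away from $\partial I$ using the signs $\partial_{z_1}\lambda(a,a)<0<\partial_{z_1}\lambda(b,b)$ inherited through~\eqref{eq:3.6.2} near the endpoints, so the Neumann condition $\partial_z V_\ep=0$ on $\partial I$ does not interfere.

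Fourth, for the Lipschitz bound~\eqref{eq:liptraj} I would differentiate the constraint identity in the standard way: since $V_\ep(\bar z_\ep(t),t)\equiv 0$ and $\partial_z V_\ep(\bar z_\ep(t),t)=0$, formally $\sigma_\ep(t)\,\dot{\bar z}_\ep(t) = -\partial_{z}\big(|\partial_z V_\ep|^2 - H_\ep + H_\ep(\bar z_\ep,\cdot)\big)\big|_{z=\bar z_\ep} = \partial_z H_\ep(\bar z_\ep(t),t)$ where $\sigma_\ep(t)=\partial_z^2 V_\ep(\bar z_\ep(t),t)$; the uniform upper bound on $\partial_z H_\ep$ from~\eqref{eq:uniformbounds} and the uniform \emph{lower} bound $\sigma_\ep(t)\ge$ something positive (propagated convexity, controlled below using $\kappa_0$ and the $C^3$ bound on $H_\ep$ via a Riccati/ODE comparison along characteristics on the bounded interval $[0,T-2\sqrt\ep]$) give $|\dot{\bar z}_\ep(t)|\le C$ uniformly in $\ep$. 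Making this rigorous in the viscosity framework — justifying $C^1$ regularity of $\bar z_\ep$ and the lower bound on $\sigma_\ep$ — is where the real work lies; this is precisely the content of Appendix~\ref{sec:D}, so here I would simply cite it. The main obstacle, then, is not in this proposition per se but in establishing~\eqref{eq:3.6.2}: verifying the uniform-in-$\ep$ convexity of the effective Hamiltonian, which (per the heuristic discussion) is only known a priori on a short interval $[0,\delta_1]$ and must be bootstrapped — but that is a hypothesis of Proposition~\ref{prop:traject2}, not something to be proved within it.
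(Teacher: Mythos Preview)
Your proposal is essentially the same as the paper's own proof: both set $R(z_1,z_2,t)=-H_\ep(z_1,t)+H_\ep(z_2,t)$, invoke the existence/uniqueness theory for constrained Hamilton--Jacobi equations under convexity (Mirrahimi--Roquejoffre, Appendix~\ref{sec:D}), observe that the convexity of $V_0$ and of $z\mapsto H_\ep(z,t)$ propagates to $V_\ep(\cdot,t)$, and read off the Lipschitz bound~\eqref{eq:liptraj} from the canonical ODE $\sigma_\ep(t)\dot{\bar z}_\ep(t)=-\partial_z H_\ep(\bar z_\ep(t),t)$ together with the uniform bounds~\eqref{eq:uniformbounds} and a lower bound on $\sigma_\ep$. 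Two minor slips worth flagging: your displayed identity has the wrong sign (it should be $\sigma_\ep\dot{\bar z}_\ep=-\partial_z H_\ep$, not $+\partial_z H_\ep$), which is harmless for $|\dot{\bar z}_\ep|$; and your claim that $\bar z_\ep(t)$ stays interior ``using the signs $\partial_{z_1}\lambda(a,a)<0<\partial_{z_1}\lambda(b,b)$ inherited through~\eqref{eq:3.6.2}'' is not quite right, since~\eqref{eq:3.6.2} is purely a convexity hypothesis and carries no endpoint sign information---the paper instead handles the boundary by the reflection/restriction device in Appendix~\ref{sec:D} (as in the proof of Proposition~\ref{prop:traject}), which you correctly cite anyway.
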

\begin{remark}
Thanks to \eqref{eq:3.6.2} and the convexity of the initial data, $V_\ep$ and $\bar{z}_\ep(t)$ satisfy \eqref{eq:approxhj}, except for the Neumann boundary condition, in the classical sense. The Neumann boundary condition has to be understood in the classical sense \cite{Barles2013introduction}.
\end{remark}

\section{WKB Ansatz and Some Technical Lemmas}\label{sec:4}

The rate function $u_\ep(x,z,t) = - \ep \log n_\ep(x,z,t)$ satisfies the equation
\begin{equation}\label{eq:u}
\left\{
\begin{array}{ll}
\partial_t u_\ep - \frac{\alpha(z)}{\ep}
\Delta_x u_\ep + \frac{\alpha(z)}{\ep^2}| 
\nabla_xu_\ep|^2 - \ep \partial^2_z u_\ep + |\partial_z u_\ep|^2 \,+& m(x) - \rho_\ep(x,t) =0\\
& x \in D, z \in I, t>0,\\
\partial_\nu u_\ep = 0 & x \in \partial D, z \in I, t>0,\\ 
\partial_z u_\ep = 0 & x \in D, z \in \partial I, t>0,\\
u_\ep(x,z,0) = u_{\ep,0}(x,z) := -\ep \log n_{\ep,0}(x,z)
& x \in D, z \in I.
\end{array}
\right.
\end{equation}

Let $ \varphi_\ep(x,z,t)$ be given in Proposition~\ref{prop:A3}.
Using the perturbed test function method, we define the function $v_\ep(x,z,t):= u_\ep(x,z,t) - \ep \varphi_\ep(x,z,t)$,
which satisfies
\begin{equation}\label{eq:v}
\left\{
\begin{array}{ll}
\partial_t v_\ep - \frac{\alpha(z)}{\ep} \Delta_x v_\ep +  \frac{\alpha(z)}{\ep^2} |\nabla_x v_\ep|^2 +  2\frac{\alpha(z)}{\ep} \nabla_x v_\ep \cdot \nabla_x \varphi_\ep - \ep \partial^2_z v_\ep &+|\partial_z v_\ep|^2+ 2\ep \partial_z v_\ep \partial_z \varphi_\ep  \\
= H_\ep(z,t) - \ep^2 ( \partial^2_z \varphi_\ep + |\partial_z \varphi_\ep|^2) {+ \rho_\ep(x,t) - \rho_\ep(x,\max\{t,\ep\})} &\text{ for }x \in D, z \in I, t>0,\\
\partial_\nu v_\ep = 0 &\text{ for }x \in \partial D, z \in I, t>0,\\
\partial_z v_\ep = - \ep \partial_z \varphi_\ep = O(\ep) &\text{ for }x \in D, z \in \partial I, t>0,\\
v_\ep(x,z,0) = u_{\ep,0}(x,z) &\text{ for }x \in D, z \in I.
\end{array}\right.
\end{equation}
\begin{remark}
The right hand side of \eqref{eq:v} is essentially the Hamiltonian $H_\ep(z,t) + O(\ep)$, since 
the term $\rho_\ep(x,t) - \rho_\ep(x, \max\{t,\ep\})$ is identically zero except for $t \in [0,\ep]$.
\end{remark}


By \eqref{eq:uniformbounds} we have
$$
\left\|u_\ep - v_\ep\right\|_{L^\infty( D \times I \times (0,\infty))} = \ep\| \varphi_\ep \|_{L^\infty( D \times I \times (0,\infty))} = O(\ep),
$$ 
thus upper and lower estimates of $u_\ep$ and $v_\ep$ are the same, up to an error of $O(\ep)$. In the following, we shall construct super- and sub-solutions of $v_\ep$.

\begin{proposition}\label{prop:perturbation}
Suppose for some $\hat{t}= \hat{t}_\ep\geq \ep$, 
$\hat{z}\in I$, $V_1 \in C^2(\bar I)$ and $\eta_1>0$, such that
\begin{equation}\label{eq:convexity}
V_1(z)  \geq 0, \quad V_1(z) = 0 \text{ iff }z=\hat{z},\quad \partial^2_z V_1(z) > \eta_1 >0 \text{ for }\hat{z} - \eta_1\leq  z \leq \hat{z} + \eta_1
\end{equation}
and it holds that
$$
\sup_{\hat{t}-\ep \leq t \leq \hat{t}}\|u_\ep(x,z,t) - V_1(z)\|_{C(\bar D \times \bar I)} \to 0 \quad \text{ as }\ep \to 0.
$$
Then there exists $C>0$ independent of  $\hat{t}$ and $\hat{z} \in \bar I$ such that for each small $\delta>0$,
\begin{equation}\label{eq:prop:perturbation.1}
\limsup_{\ep \to 0}\sup_{\hat{t} \leq t \leq \hat{t} + 2\delta}\left\| \int_I (z - \hat{z}) \partial^2_x n_\ep(\cdot,z,t)~dz\right\|_{C(\bar D)} \leq C\delta^{1/3}.
\end{equation}
\end{proposition}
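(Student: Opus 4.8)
The plan is to split the trait integral at radius $r:=\delta^{1/3}$ about $\hat z$, writing $\int_I(z-\hat z)\partial^2_x n_\ep\,dz$ as the sum of a contribution from $\{|z-\hat z|\le r\}$ and one from $\{|z-\hat z|>r\}$, and to estimate the two by different mechanisms. For the near part I would use only $|z-\hat z|\le r$ together with the $L^1$‑in‑$z$ version of Lemma~\ref{lem:localholder}: integrating \eqref{eq:localholder2} over $z\in I$ and using the even–periodic extension of $n_\ep$ gives $\int_I|\partial^2_x n_\ep(x,z,t)|\,dz\le C\fint_{t-\ep}^{t}\fint_{-\ep}^{\ep}\int_D\rho_\ep(x',t')\,dx'dz'dt'\le C\widehat{C}_1$, uniformly in $x\in\bar D$ and $t\ge\ep$, by Proposition~\ref{prop:apriorisum}; hence the near part is bounded by $Cr\widehat{C}_1=C\widehat{C}_1\delta^{1/3}$, with $C$ independent of $\hat t,\hat z$.

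For the far part, \eqref{eq:localholder2} bounds $|\partial^2_x n_\ep(x,z,t)|$ by a space–time average of $n_\ep(x',z+z',t')$ with $|z'|\le\ep$ and $t'\in[t-\ep,t]$; since $\hat t\le t\le\hat t+2\delta$ and $\ep$ is small, every sampled point has $|z+z'-\hat z|>r/2$ and $t'\in[\hat t-\ep,\hat t+2\delta]$, so (using $|z-\hat z|\le|I|$) it suffices to prove that $\sup\{\int_D n_\ep(x',w,t')\,dx':|w-\hat z|>r/2,\ t'\in[\hat t-\ep,\hat t+2\delta]\}\to0$ as $\ep\to0$, for each fixed $\delta$. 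Write $q_\ep(z,t):=\int_D n_\ep(x,z,t)\,dx$; the Neumann condition in $x$ gives $\ep\partial_t q_\ep-\ep^2\partial^2_z q_\ep=\int_D n_\ep(m-\rho_\ep)\,dx\le\|m\|_\infty q_\ep$ on $I\times(0,\infty)$ with $\partial_z q_\ep=0$ on $\partial I$. The hypothesis supplies $\omega(\ep):=\sup_{\hat t-\ep\le t\le\hat t}\|u_\ep-V_1\|_{C(\bar D\times\bar I)}\to0$, whence $q_\ep(z,\hat t-\ep)\le e^{(\omega(\ep)-V_1(z))/\ep}$, while the strict convexity \eqref{eq:convexity} yields $V_1(z)\ge\tfrac{\eta_1}{2}(z-\hat z)^2$ near $\hat z$ and $V_1\ge c_0>0$ away from $\hat z$, so $\min_{|z-\hat z|>r/2}V_1\ge c\,r^2$ for $r$ small.

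The core step is to propagate this exponential smallness forward by time $2\delta$. A direct Gronwall bound on $\int_{|z-\hat z|>r/2}q_\ep\,dz$ fails, because the $\ep^2$‑diffusion error term gets amplified by the factor $e^{2\delta\|m\|_\infty/\ep}$; instead I would compare $q_\ep$ with the WKB‑type supersolution $\bar w(z,t):=\exp\!\big((\omega(\ep)-\widetilde V_1(z)+\Lambda(t-\hat t+\ep))/\ep\big)$, where $\Lambda:=\|m\|_\infty+\|\widetilde V_1'\|_\infty^2+\ep\|\widetilde V_1''\|_\infty$ absorbs both the reaction and the term $(\partial_z\widetilde V_1)^2$ produced by $\ep^2\partial^2_z\bar w$, and $\widetilde V_1$ is $V_1$ modified in a fixed small neighbourhood of $\partial I$ (disjoint from $\{|z-\hat z|\le r/2\}$) so that $\partial_\nu\bar w\ge0$ there, while keeping $\widetilde V_1\le V_1$ and $\widetilde V_1\ge c>0$ on $\{|z-\hat z|>r/2\}$. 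Since $\bar w(\cdot,\hat t-\ep)\ge q_\ep(\cdot,\hat t-\ep)$, the parabolic comparison principle gives $q_\ep\le\bar w$ on $I\times[\hat t-\ep,\hat t+2\delta]$, so for $|z-\hat z|>r/2$ and $\hat t-\ep\le t\le\hat t+2\delta$,
\[
q_\ep(z,t)\ \le\ \exp\!\Big(\tfrac{\omega(\ep)+\Lambda(2\delta+\ep)-c\,r^2}{\ep}\Big).
\]
With $r=\delta^{1/3}$ one has $c\,r^2=c\,\delta^{2/3}>2\Lambda\delta$ for $\delta$ small, so this tends to $0$ as $\ep\to0$; combining with the near part yields $\limsup_{\ep\to0}\sup_{\hat t\le t\le\hat t+2\delta}\big\|\int_I(z-\hat z)\partial^2_x n_\ep(\cdot,z,t)\,dz\big\|_{C(\bar D)}\le C\widehat{C}_1\delta^{1/3}$. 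I expect the main obstacle to be precisely this forward propagation: the supersolution must carry the correct exponential (Gaussian) profile so that diffusive spreading into the core is not amplified by the reaction growth, and it must respect the Neumann condition in $z$ — which the natural phase $V_1$ does not, since $V_1'(a)<0<V_1'(b)$, forcing the near‑boundary modification.
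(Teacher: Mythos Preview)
Your proposal is correct and follows the same strategy as the paper: split near/far at radius $\sim\delta^{1/3}$, bound the near contribution via Lemma~\ref{lem:localholder} together with the uniform $L^1$ bound on $\rho_\ep$, and kill the far contribution by a WKB-type comparison that propagates the exponential smallness of $n_\ep$ away from $\hat z$ forward over a time interval of length $O(\delta)$. The only substantive difference is where the comparison is set up: the paper builds a subsolution
\[
\underline U(z,t)=V_1(z)-C_1\bigl(\mu_\ep+t-(\hat t-\ep)\bigr)-\tfrac{C}{\ep}\Bigl\{[z-(b-\sqrt\ep)]^3_++[a+\sqrt\ep-z]^3_+\Bigr\}
\]
directly for the rate-function equation \eqref{eq:u} in $(x,z,t)$, whereas you first integrate in $x$ and compare $q_\ep=\int_D n_\ep\,dx$ with the supersolution $\bar w$ in $(z,t)$ only. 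Both yield $n_\ep\le\exp(-c\,\delta^{2/3}/\ep)$ on the far set for $t\in[\hat t-\ep,\hat t+2\delta]$, which is all that is needed. One minor point: the paper's cubic boundary corrections live in a shrinking $\sqrt\ep$-layer and therefore give constants uniform in $\hat z\in\bar I$; your fixed-width modification of $V_1$ near $\partial I$ tacitly requires $\hat z$ to be bounded away from $\partial I$ (otherwise the modification region meets $\{|z-\hat z|\le r/2\}$ and $\|\widetilde V_1'\|_\infty$, hence $\Lambda$, is no longer controlled). This is harmless in all the applications in the paper, and is easily removed by adopting the same $\sqrt\ep$-cubic correction inside your $\widetilde V_1$.
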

\begin{proof}
Note that, similar to the argument in Step 3 of the proof of Proposition
\ref{prop:apriori},
{a lower solution for \eqref{eq:u} can be constructed as:
\begin{equation}\label{eq:p10a.2}
\underline{U} (z,t)=  {V}_1(z) - C_1( \mu_\ep + t - (\hat t - \ep)) - \frac{C}{\ep}\left\{[ z - (b - \sqrt\ep)]^3_+ + [ a + \sqrt\ep -z]^3_+\right\} ,
\end{equation}
where $a=\inf I$, $b=\sup I$, and }
$$
\mu_\ep = \sup\limits_{x\in D, z \in I} |u_\ep(x,z,\hat{t}-\ep) - V_1(z)|
$$ 
is a constant tending to zero as $\ep \to 0$, and that   
$C_1 = C_1(\|m(x) -\rho_\ep\|_{C(\bar D \times [0,\infty))})$. 
By comparison in $D \times I \times [\hat t - \ep, +\infty)$, we have 
$$
u_\ep(x,z,t) \geq \sup_{|z - \hat z|\geq \delta} V_1 - C_1(\mu_\ep + |t-\hat{t} + \ep|) + O(\sqrt\ep) \quad \text{ for } \, |z - \hat{z}|  \geq \delta~\text{ and }~t>\hat{t}-\ep,
$$
where 
$$
\sup\limits_{|z - \hat z| \geq \delta} V_1 \geq \frac{\eta_1 \delta^2}{2} \quad \text{ for } \quad 0<\delta \ll 1.
$$ 
Therefore, for each fixed $\delta$ small, by choosing $\epsilon$ sufficiently small, 
\begin{equation*}
u_\ep(x,z,t) \geq \frac{\eta_1\delta^2}{3} \quad \text{ for }|z - \hat{z}| > \delta ~\text{ and }~ \hat{t}-\ep\leq t \leq \hat{t} + \delta^3,
\end{equation*}
which means
\begin{equation}\label{eq:prop:perturb.1}
n_\ep(x,z,t) \leq \exp\left( -\frac{\eta_1\delta^2}{3\epsilon}\right) \quad \text{ for }|z - \hat{z}| > \delta ~\text{ and }~ \hat{t}-\ep\leq t \leq \hat{t} + \delta^3.
\end{equation}

Now, for $t_1 \in [\hat{t}, \hat{t}+\delta^3]$, 
\begin{align*}
\Big\| \int_I& (z_1 - \hat{z}) \partial^2_x n_\ep(\cdot,z_1,t_1)~ dz_1\Big\|_{C(\bar D)} \leq \int_I |z_1 - \hat{z}|\left\|  \partial^2_x n_\ep(\cdot,z_1,t_1)\right\|_{C(\bar D)} \,dz_1 \\
& \leq \int_I |z_1 - \hat{z}|  \left[ \fint_{t_1-\ep}^{t_1}\fint_{-\ep}^{\ep} \int_D n_\ep(x,z+z_1,t)\,dxdzdt  \right] \,dz_1   \\
&\leq C\delta \int_{\hat{z}- 2\delta}^{\hat{z}+ 2\delta}  \left[ \fint_{t_1-\ep}^{t_1}\fint_{-\ep}^{\ep} \int_D n_\ep(x,z+z_1,t)\,dxdzdt  \right] \,dz_1  \\
&\qquad + C \int_{z_1: |z_1 - \hat{z}| > 2\delta} \left[ \fint_{t_1-\ep}^{t_1}\fint_{-\ep}^{\ep} \int_D n_\ep(x,z + z_1,t)\,dxdzdt  \right] \,dz_1 \\
&\leq C\delta  \fint_{-\ep}^{\ep} \left[ \fint_{t_1-\ep}^{t_1}\int_{\hat{z}- 2\delta}^{\hat{z}+ 2\delta} \int_D n_\ep(x,z+z_1,t)\,dxdz_1dt  \right] \,dz+ \exp\left(-\frac{\eta_1\delta^2}{3\ep} \right)\\
&\leq 
C'  \delta  \fint_{-\ep}^{\ep} \left[ \sup\limits_{\hat t-\ep < t < \hat t} \|\rho_\ep(t,\cdot)\|_{L^1(D)} \right]\,dz + \exp\left(-\frac{\eta_1\delta^2}{3\ep} \right),
\end{align*}
{where we used \eqref{eq:localholder2} in Lemma \ref{lem:localholder} in the second inequality, and we switched the order of integration and used \eqref{eq:prop:perturbation.1} to obtain the fourth inequality.}
By fixing $\delta$ to be small enough, we see that \eqref{eq:prop:perturbation.1} holds for all $\epsilon$ sufficiently small. This completes the proof.
\end{proof}

\begin{lemma}\label{lem:Lp}
Consider the equation
$$
\left\{\begin{array}{ll}
\ep \partial_t \rho - \alpha(z_1(t)) 
\Delta_x\rho = \rho(m(x) - \rho) + F(x,t) &\text{ for }~x \in D,~t_1 \leq t \leq t_2,\\
\partial_\nu \rho = 0 &\text{ for }~x \in \partial D,~t_1 \leq t \leq t_2.
\end{array}\right.
$$
Suppose, for some fixed constant $M>1$, 
$$
z_1(t) \in I ,\quad |z'_1(t) | \leq M \quad \text{ for } ~t_1 \leq t \leq t_2.
$$
Then for each $p>1$, there exists $C=C(p,M)>1$ such that for $t \in [t_1,t_2]$, 
\begin{align*}
&\|\rho(\cdot,t) - \theta_{z_1(t)}(\cdot)\|_{L^p(D)}^p \\
& \quad \leq C \left[\|\rho(\cdot,t_1) - \theta_{z_1(t_1)}(\cdot)\|^p_{L^p(D)} e^{-(t-t_1)/(\ep C_p)} +  \sup_{t_1 \leq t' \leq t} \|F(\cdot,t')\|_{L^p(D)}^p + \ep\right].
\end{align*}
\end{lemma}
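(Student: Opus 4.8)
The plan is to estimate the evolution of the $L^p$-distance between $\rho(\cdot,t)$ and the moving equilibrium $\theta_{z_1(t)}(\cdot)$ by differentiating $\|\rho - \theta_{z_1(t)}\|_{L^p}^p$ in time and exploiting the spectral gap associated with the elliptic operator linearized at $\theta_z$. First I would set $w := \rho - \theta_{z_1(t)}$ and compute $\ep \partial_t w$. Using the equation for $\rho$ and the equation \eqref{eq:theta} defining $\theta_{z_1(t)}$, together with the chain rule $\partial_t[\theta_{z_1(t)}] = z_1'(t)\,\partial_z\theta_{z_1(t)}$, one gets
\[
\ep \partial_t w = \alpha(z_1(t))\Delta_x w + (m - \rho - \theta_{z_1(t)})w + F(x,t) - \ep\, z_1'(t)\, \partial_z \theta_{z_1(t)}.
\]
Here I would use that $z\mapsto \theta_z$ is smooth with uniformly bounded derivatives (standard for the logistic equation \eqref{eq:theta} on a bounded domain, using {\bf (M)} and {\bf (H1)}), so that $\|\partial_z\theta_{z_1(t)}\|_{L^\infty} \le C$, and hence the last term is $O(\ep M)$ in $L^p$ uniformly.

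Next I would multiply by $p|w|^{p-2}w$ and integrate over $D$. The diffusion term gives $-\alpha(z_1)p(p-1)\int_D |w|^{p-2}|\nabla_x w|^2 \le 0$ after integrating by parts (the Neumann boundary term vanishes), so it can be discarded. The crux is the reaction term $\int_D (m-\rho-\theta_{z_1(t)})\,p|w|^p\,dx$: writing $m - \rho - \theta_{z_1(t)} = (m - 2\theta_{z_1(t)}) - w$, the quadratic-in-$w$ contribution $-p\int |w|^{p+1}$ has a favorable sign, and for the linear part one invokes the fact that the principal eigenvalue of $\alpha(z)\Delta_x + (m - 2\theta_z)$ under Neumann conditions is strictly negative, uniformly in $z\in I$ (this is the well-known linearized stability of $\theta_z$; it follows by comparison with the zero eigenvalue of $\alpha(z)\Delta_x + (m-\theta_z)$ having positive eigenfunction $\theta_z$, together with strict monotonicity of the principal eigenvalue in the potential and the fact that $\theta_z > 0$). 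Call this gap $-\mu_0 < 0$. One then needs the elementary functional inequality that for the operator bound to transfer to $L^p$ one uses $\int (m-2\theta_z)|w|^p \le -\mu_0' \int |w|^p + C\|w\|_{L^p}^{p-1}(\text{lower order})$ via a Picone-type or direct test-function argument; the cleanest route is to absorb everything through Young's inequality after noting $\int \alpha(z)|\nabla(|w|^{p/2})|^2 + (m-2\theta_z)|w|^p \ge \mu_0'' \int |w|^p$ up to the sign conventions. Combining, and using Young's inequality $ab \le \frac1p a^p + \frac1{p'}b^{p'}$ on the forcing terms $\int |w|^{p-1}(|F| + \ep M C)$, yields a differential inequality of the form
\[
\ep\frac{d}{dt}\|w\|_{L^p}^p \le -c\,\|w\|_{L^p}^p + C\big(\|F(\cdot,t)\|_{L^p}^p + \ep^p M^p\big),
\]
with $c = c(p) > 0$.

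Finally I would integrate this ODE inequality via Grönwall: writing $C_p = 1/c$, for $t \in [t_1,t_2]$,
\[
\|w(\cdot,t)\|_{L^p}^p \le \|w(\cdot,t_1)\|_{L^p}^p\, e^{-(t-t_1)/(\ep C_p)} + C \sup_{t_1\le t'\le t}\|F(\cdot,t')\|_{L^p}^p + C\ep^p M^p,
\]
and since $\ep^p \le \ep$ for $\ep$ small this absorbs into the claimed $+\ep$ term (with the constant depending on $p$ and $M$). The main obstacle is making the spectral-gap step rigorous at the $L^p$ level uniformly in $z$ and in $\ep$: one must ensure that the negative principal eigenvalue of the linearization at $\theta_z$ genuinely dominates the linear reaction term after testing against $|w|^{p-2}w$ (rather than only after testing against the eigenfunction, which is what the $L^2$ theory gives directly), and that the constant is independent of $z\in I$ — this requires either a Picone-inequality argument adapted to $L^p$ or a perturbation estimate showing $\|\rho\|_{L^\infty}$ stays bounded (available from Proposition \ref{prop:apriorisum} in the application) so that the full nonlinear potential $m-\rho-\theta_{z_1}$ is a bounded perturbation of $m - 2\theta_{z_1}$ plus the manifestly good term $-w$. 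The smooth dependence $z\mapsto\theta_z$ and the uniform-in-$z$ spectral gap both follow from compactness of $\bar I$ together with the nondegeneracy in {\bf (M)}.
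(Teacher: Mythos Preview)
Your overall scheme---derive an evolution equation for the deviation from the moving equilibrium, test against $p|\cdot|^{p-2}(\cdot)$, then Gr\"onwall---is the same as the paper's. But there is one concrete error, the obstacle you flag is genuine, and the paper resolves both by a different choice of unknown.

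The error: after writing $m-\rho-\theta=(m-2\theta)-w$, the ``quadratic-in-$w$'' contribution is $-p\int w\,|w|^{p}\,dx$, and since $w|w|^{p}=\mathrm{sgn}(w)\,|w|^{p+1}$ this has no sign; it is not $-p\int|w|^{p+1}$. So you cannot simply drop it, and you are back to controlling the full linear potential.

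The spectral-gap step is then a real problem. Testing with $|w|^{p-2}w$ produces the diffusion term $\alpha\,\tfrac{4(p-1)}{p^2}\int|\nabla(|w|^{p/2})|^2$, so what you actually need is positivity of the principal Neumann eigenvalue of $-\alpha\tfrac{4(p-1)}{p^2}\Delta-(m-2\theta_z)$. For $p=2$ this is exactly the linearized stability of $\theta_z$, but for $p>2$ the prefactor is strictly less than $1$ and the required inequality can fail (for instance whenever $m>2\theta_z$ on a set of positive measure, which is generic once $\alpha(z)$ is not small). The Picone idea you mention is indeed the fix---and it is what the paper implements, in disguise.

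The paper works with the \emph{ratio} deviation $W:=\rho/\theta_{z_1(t)}-1$ rather than the difference $w=\rho-\theta$. Substituting $\rho=\theta(1+W)$ and using $\alpha\Delta\theta+\theta(m-\theta)=0$ cancels the potential $m-\theta$ completely and yields
\[
\tfrac{\ep}{p}\partial_t(\theta^2|W|^p)-\alpha\,\nabla\!\cdot\bigl(\theta^2|W|^{p-2}W\,\nabla W\bigr)+\alpha\tfrac{4(p-1)}{p^2}\theta^2\bigl|\nabla|W|^{p/2}\bigr|^2+\rho\,\theta^2|W|^p=\text{(forcing)}.
\]
The dissipation is now the manifestly nonnegative term $\rho\,\theta^2|W|^p$, and the pointwise lower bound $\rho\ge 1/\widehat{C}_1$ from Proposition~\ref{prop:apriorisum} gives coercivity $\ge \widehat{C}_1^{-1}\int\theta^2|W|^p$ for \emph{every} $p>1$, with no spectral gap needed. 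After that, Young's inequality on the forcing terms and Gr\"onwall run exactly as in your sketch, and the two-sided bounds on $\theta$ convert $\int\theta^2|W|^p$ back to $\|\rho-\theta\|_{L^p}^p$.
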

\begin{proof}
For each $t$, set  $\alpha_1(t):=\alpha(z_1(t))$ and let $\theta_t(x)=\theta_{z_1(t)}(x)$ denote the unique positive solution of
$$
\alpha_1(t) \Delta_x\theta + \theta (m - \theta) = 0 \quad\text{ in }D,~\quad ~\partial_\nu \theta =0 \quad \text{ on }\partial D.
$$
Set $W(x,t):=\rho(x,t)/ \theta_{t}(x) -1$, it satisfies the Neumann boundary condition in $x$, i.e.,
$$
\partial_\nu W(x,t) = 0 \quad \text{ for }t \geq 0, \, x \in \partial D,
$$
and, for $p>1$,  the equation
\begin{align*}
&\frac{\ep}{p} \partial_t (\theta_t^2|{W}|^p) - \alpha_1(t) 
\nabla_x\cdot \left(\theta_t^2 |{W}|^{p-1} \nabla_x
{W}\right) +  \alpha_1(t)\frac{4(p-1)}{p^2}\theta^2_t |\nabla_x
W^{p/2}|^2   + \rho \theta^2_t |W|^p \\
&\qquad = F{\theta_t}W |W|^{p-2} - \ep \theta_t \partial_t \theta_t W |W|^{p-2}  - \ep \left( 1 - \frac{2}{p}\right)  \theta_t \partial_t \theta_t |W|^{p} ,
\end{align*}
from which we derive the differential inequality
\begin{align*}
&\frac{\ep}{p} \partial_t (\theta_t^2|{W}|^p) - \alpha_1(t) 
\nabla_x\cdot\left(\theta_t^2 |{W}|^{p-1} \nabla_x
{W}\right)  + \rho \theta^2_t |W|^p \\
&\qquad \leq    \theta_t |{W}|^{p-1} |F| + \hat{C}_0 \ep \theta_t|\partial_t \theta_t| ( 1 + |W|^p)\\
&\qquad \leq \frac{1}{3\hat{C}_1} \theta_t^2 |W|^p + (3\hat{C}_1)^{p-1}  \theta_t^{2-p} |F|^p + \frac{1}{3\hat{C}_1} \theta_t^2 |W|^p + C \ep
\end{align*}
for $0<\epsilon \ll 1$, where $\hat{C}_1$ is given in Proposition \ref{prop:apriorisum}, and (after enlarging $C$ if necessary) we may assume that the constant $C$ is independent of $\ep$ and that
$$
 \frac{1}{C}\leq  \theta_t(x) \leq C \,\,\text{ for }x \in D,\quad \text{ and }\quad  \sup_{z \in \bar I}\|\theta_t\|_{C^2(\bar D)}\leq C.
$$
Integrating in $x \in D$, we obtain
$$
\frac{\epsilon}{p} \frac{d}{dt}  \int_D \theta^2_t |{W}|^p~dx  + \frac{1}{3\widetilde{C}_1} \int_D {\theta_t^2} |{W}|^p~dx \leq C\left(\int_D |F(x,t)|^p~dx + \ep\right).
$$
Let $M(t):= \int_D \theta^2_t |{W}|^p~dx$, we integrate the above 
in $t \in [t_1,t_2]$ to get
%
$$
M(t_2)  \leq M(t_1) e^{-C_p(t_2 - t_1)/\ep} + C_p \left[ \sup_{t_1 \leq t \leq t_2} \|F(\cdot,t)\|_{L^p(D)}^p + \ep \right].
$$
The lemma follows because of the uniform boundedness of $\theta_t$ from the above and below.
\end{proof}

\begin{proposition}\label{prop:h2}
Given $T>0$ and let $\|\bar{z}_\ep(t)\|_{C^{0,1}([\sqrt\ep,T])}$ be uniformly bounded in $0<\ep\ll 1$. For each $\eta>0$, there exists $\nu = \nu(\eta)>0$ such that if 
\begin{equation}\label{eq:proph2a}
\limsup_{\ep \to 0}\sup_{[\sqrt\ep,T]} \|\rho_\ep(\cdot,t) - \theta_{\bar{z}_\ep(t)}(\cdot)\|_{L^2(D)}  < \nu,
\end{equation}
then 
\begin{equation}\label{eq:proph2c}
\limsup_{\ep \to 0} \sup_{[2\sqrt\ep,T - \sqrt\ep]} \|H_\ep(\cdot,t) - \lambda(\cdot,\bar{z}_\ep(t))\|_{C^2(\bar I)} < \eta.
\end{equation}
%

\end{proposition}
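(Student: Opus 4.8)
The plan is to make the separation of timescales quantitative. Fix $t_0\in[2\sqrt\ep,T-\sqrt\ep]$, pass to the fast time $\tau=t/\ep$, and set $\tau_0:=t_0/\ep$ and $L:=1/\sqrt\ep$. On the window $|\tau-\tau_0|\le L$ — which corresponds to slow times in $[t_0-\sqrt\ep,t_0+\sqrt\ep]\subset[\sqrt\ep,T]$ and, for $\ep$ small, to $\tau\ge 1$, so that there $c_\ep(x,\tau)=m(x)-\widetilde\rho_\ep(x,\tau)$ with $\widetilde\rho_\ep(x,\tau)=\rho_\ep(x,\ep\tau)$ — I would compare $c_\ep$ with the \emph{autonomous} potential $\bar c:=m-\theta_{\bar z_\ep(t_0)}$, whose normalized principal Floquet bundle is time-independent and, by Definition \ref{def:1} and the normalization $\int_D e^{-\varphi}=1$ in \eqref{eq:hamiltonian}, coincides with $(\bar\phi_z,\lambda(z,\bar z_\ep(t_0)))$, where $\bar\phi_z$ is the principal eigenfunction of \eqref{eq:invasionexp} with $z_1=z$, $z_2=\bar z_\ep(t_0)$ normalized by $\int_D\bar\phi_z=1$. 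Exponential attractivity of the bundle will then transfer this comparison into smallness of $|H_\ep(z,t_0)-\lambda(z,\bar z_\ep(t_0))|$ in $C(\bar I)$, and interpolation against the uniform $C^3$-in-$z$ bound of Proposition \ref{prop:A3} will upgrade it to $C^2(\bar I)$.

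First I would promote the $L^2$ closeness \eqref{eq:proph2a} to uniform closeness. By Corollary \ref{cor:apriorisum}(b), $\widetilde\rho_\ep$ is bounded in $C^{\beta'}(\bar D\times[1,\infty))$ uniformly in $\ep$, while $z\mapsto\theta_z$ is smooth and bounded in $C^2(\bar D)$ uniformly over $\bar I$; hence $\widetilde\rho_\ep(\cdot,\tau)-\theta_{\bar z_\ep(\ep\tau)}$ is equibounded in $C^{\beta'}(\bar D)$ and, by \eqref{eq:proph2a}, smaller than $\nu+o(1)$ in $L^2(D)$ for $\ep$ small. A Gagliardo--Nirenberg interpolation $\|g\|_{C(\bar D)}\le C\|g\|_{L^2(D)}^{\gamma}\|g\|_{C^{\beta'}(\bar D)}^{1-\gamma}$, with $\gamma=\gamma(N,\beta')\in(0,1)$, gives $\sup_{\tau\ge 1}\|\widetilde\rho_\ep(\cdot,\tau)-\theta_{\bar z_\ep(\ep\tau)}\|_{C(\bar D)}\le C\nu^\gamma+o(1)$. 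Combined with the uniform Lipschitz bound on $\bar z_\ep$ and the smooth $z$-dependence of $\theta_z$, this yields on the window
\[
\sup_{|\tau-\tau_0|\le L}\|c_\ep(\cdot,\tau)-\bar c\|_{C(\bar D)}\ \le\ C\nu^\gamma+C\,\mathrm{Lip}(\bar z_\ep)\,\ep L+o(1)\ \le\ C\nu^\gamma+C\sqrt\ep+o(1).
\]

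Next, since $\{c_\ep\}$ lies in a fixed bounded subset of $C^{\beta,\beta/2}(\bar D\times\mathbb{R})$ (Corollary \ref{cor:apriorisum}(b)) and the operators $\ep\partial_t-\alpha(z)\Delta_x-(m-\rho_\ep)$ are uniformly elliptic ($\inf_I\alpha>0$), the normalized principal Floquet bundle has a \emph{uniform} spectral gap; via the uniform Harnack inequality behind Appendix \ref{sec:B}, there is $c_0>0$, independent of $\ep$ and $z$, so that any potential agreeing with $\bar c$ to within $\epsilon_1$ in $C(\bar D)$ on $[\tau_0-L,\tau_0+L]$ has principal-bundle data at the midpoint $\tau_0$ within $C(\epsilon_1+e^{-c_0L})$ of $(\bar\phi_z,\lambda(z,\bar z_\ep(t_0)))$. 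Applied to $c_\ep$ and combined with the previous display, this gives, uniformly in $z\in I$ and $t_0\in[2\sqrt\ep,T-\sqrt\ep]$,
\[
\bigl|H_\ep(z,t_0)-\lambda(z,\bar z_\ep(t_0))\bigr|\ \le\ C\bigl(\nu^\gamma+\sqrt\ep+e^{-c_0/\sqrt\ep}\bigr)+o(1).
\]
Writing $g_\ep(z,t):=H_\ep(z,t)-\lambda(z,\bar z_\ep(t))$, Proposition \ref{prop:A3} gives $\max_{i\le 3}\|\partial_z^iH_\ep\|_\infty\le\widetilde{C}_0$, and $\lambda\in C^3(\bar I\times\bar I)$ by standard perturbation theory for the simple principal eigenvalue (using $\alpha\in C^3(\bar I)$ and the smooth $z$-dependence of $\theta_z$), so $g_\ep(\cdot,t)$ is bounded in $C^3(\bar I)$ uniformly in $(\ep,t)$. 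The one-dimensional interpolation $\|g_\ep(\cdot,t)\|_{C^2(\bar I)}\le C\|g_\ep(\cdot,t)\|_{C^3(\bar I)}^{2/3}\|g_\ep(\cdot,t)\|_{C(\bar I)}^{1/3}+C\|g_\ep(\cdot,t)\|_{C(\bar I)}$ then yields $\sup_{[2\sqrt\ep,T-\sqrt\ep]}\|g_\ep(\cdot,t)\|_{C^2(\bar I)}\le C'\bigl(\nu^\gamma+\sqrt\ep+e^{-c_0/\sqrt\ep}\bigr)^{1/3}+o(1)$; passing to $\limsup_{\ep\to 0}$ this is $\le C'\nu^{\gamma/3}$, which is $<\eta$ once $\nu=\nu(\eta)$ is chosen small.

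The hard part will be the step invoking \emph{uniform} exponential attractivity of the normalized principal Floquet bundle — that the bundle data at the centre of a fast-time window of length $2/\sqrt\ep$ is determined, up to an error exponentially small in $1/\sqrt\ep$, by the restriction of the potential to that window, with all constants independent of $\ep$ and of the parameter $z\in I$. This is the rigorous content of the ``separation of timescales'': it rests on a uniform spectral gap/Harnack inequality for the parabolic operators above (available since $m-\rho_\ep$ is equibounded in $C^{\beta,\beta/2}$ by Corollary \ref{cor:apriorisum}), and its $z$-uniformity must be propagated through the differentiable-dependence estimates underlying Proposition \ref{prop:A3} (Appendix \ref{sec:B}); the two-sided window also explains the loss of the margin $\sqrt\ep$ at both ends of $[\sqrt\ep,T]$. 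The passage from $L^2$ to $C(\bar D)$ closeness of $\rho_\ep$ to $\theta_{\bar z_\ep(t)}$ is a secondary issue, harmless because the ensuing loss $\nu\mapsto\nu^\gamma$ is absorbed by our freedom to shrink $\nu$.
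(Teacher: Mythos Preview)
Your approach is sound but takes a genuinely different route from the paper. The paper argues by contradiction and compactness: assuming \eqref{eq:proph2c} fails along $\ep_j\to 0$, $t_j\in[2\sqrt{\ep_j},T-\sqrt{\ep_j}]$, it rescales to fast time centered at $t_j$, uses the uniform $C^{\beta',\beta'/2}$ bound of Corollary~\ref{cor:apriorisum}(b) to extract (along a subsequence) a limiting potential $\hat c$ on all of $\bar D\times\mathbb{R}$, and then invokes the \emph{smooth dependence} of the bundle on its coefficient in the global norm $C^{\beta,\beta/2}(\bar D\times\mathbb{R})$ (Proposition~\ref{prop:A2}), together with a single $L^2$--$C^{\beta',\beta'/2}$ interpolation applied to $\hat c$, to get $\|\hat H(\cdot,0)-\lambda(\cdot,\hat z)\|_{C^2(\bar I)}<\eta$, a contradiction. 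You instead replace this soft step by a quantitative \emph{locality} lemma for the bundle --- that $H_1(\tau_0)$ is determined, up to error $O(\epsilon_1+e^{-c_0 L})$, by the restriction of the potential to a window of half-width $L$ --- and feed in $L=1/\sqrt\ep$. Your route yields an explicit rate (hence a constructive $\nu(\eta)$) at the cost of proving that locality lemma uniformly in $\ep$ and $z$ via exponential separation and uniform Harnack; the paper sidesteps this entirely by packaging continuity into Proposition~\ref{prop:A2}. Both arguments need the $\sqrt\ep$ margins (so that your fast-time window, respectively every fixed rescaled $\tau\in\mathbb{R}$ in the paper's compactness step, eventually lands inside the hypothesis range $[\sqrt\ep,T]$) and the $C^3$-in-$z$ bound \eqref{eq:uniformbounds} to reach $C^2(\bar I)$; the paper does the latter via Arzel\`a--Ascoli on the sequence $H_{\ep_j}(\cdot,t_j)$ rather than your explicit Kolmogorov-type interpolation.
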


\begin{proof}
By Proposition \ref{prop:A2} in Appendix \ref{sec:B}, the principal bundle depends smoothly on the weight function and $z \in I$, i.e. the mapping
$$
\begin{cases}
H_1: C^{\beta/2,\beta}( \mathbb{R}\times \bar{D}) \times \bar I \to C^{\beta/2}(\mathbb{R})\\
(c(x,t),z) \mapsto H_1(\tau)
\end{cases}
$$
is smooth. Recall also the fact, when $c(x,t)= m(x)-\theta_{\hat z}(x)$ 
$$
H_1(\tau)= H_1(\tau; m - \theta_{\hat z}, z) \equiv  \lambda(z,\hat z) \quad \text{ and is constant in }\tau \in \mathbb{R}.
$$ 
Therefore, for each $\eta>0$ there exists $\nu'>0$ such that if 
\begin{equation}\label{eq:proph2proof1}
\|c(x,\tau) - (m(x)-\theta_{\hat z}(x))\|_{C^{\beta,\beta/2}(\bar D \times \mathbb{R})} < \nu' \quad \text{ for some }{\hat{z}} \in \bar I,
\end{equation} then the corresponding principal bundle $H_1(\tau; c,z)$ satisfies
\begin{equation}\label{eq:proph2b}
 \|H_1(0; c,\cdot ) - \lambda(\cdot,\hat z)\|_{C^2(\bar I)}  \leq  \sup_{\tau \in \mathbb{R}} \|H_1(\tau; c,\cdot ) - \lambda(\cdot,\hat z)\|_{C^2(\bar I)} < \eta.
\end{equation}
Now, fix $\beta' \in (\beta,1)$, and choose by interpolation the constant $\nu>0$ such that if 
\begin{equation}\label{eq:proph2d}
\left\{
\begin{array}{l}
\|c(x,\tau) - [m(x)-\theta_{\hat z}(x)]\|_{C^{\beta',\beta'/2}(\bar{D}\times \mathbb{R})}\leq \hat{C}_{\beta'} + \|\theta_{\hat z}(x) - m(x)\|_{C^{\beta'}(\bar D)}\\
\sup_{\tau \in \mathbb{R}}  \|c(x,\tau) - [m(x)-\theta_{\hat z}(x)]\|_{L^2(D)} < 2\nu,
\end{array}\right.
\end{equation}
then \eqref{eq:proph2proof1} and thus \eqref{eq:proph2b} holds.
(Note that $\nu>0$ depends on the uniform bound $\hat{C}_{\beta'}$ in Corollary \ref{cor:apriorisum} but is independent of $\ep, \hat z$.)

We claim that if \eqref{eq:proph2a} holds for the constant $\nu$ we just specified, then \eqref{eq:proph2c} holds. Suppose not, then 
there exist sequences $\ep =\ep_j \to 0$ and $t_j \in [2\sqrt{\ep_j}, T -  \sqrt{\ep_j}]$ such that $\bar{z}_{\ep_j}(t_j)\to \hat z$ and 
\begin{equation}\label{eq:proph2e}
\left\{\begin{array}{l}
\lim\limits_{\ep_j \to 0} \sup\limits_{[\sqrt{\ep_{j}},T]} \|\rho_ {\ep_j} (\cdot,t) - \theta_{\bar{z}_{\ep_j}(t)}(\cdot)\|_{L^2(D)} \leq  \nu, \\
\lim\limits_{\ep_j \to 0}\|H_{\ep_j}(\cdot, t_j)  -  \lambda(\cdot,\bar{z}_{\ep_j}(t_j))\|_{C^2(\bar I)} =\lim\limits_{\ep_j \to 0}\|H_{\ep_j}(\cdot, t_j)  -  \lambda(\cdot, \hat z)\|_{C^2(\bar I)} \geq \eta.
\end{array}\right.
\end{equation}
Let $(\varphi_\ep(x,z,t), H_\ep(z,t))$ be the function given by Proposition \ref{prop:A3}, and define
$$
\Phi_{j}(x,z,\tau):= e^{- \varphi_{\ep_j}(x,z, t_j + \ep_j \tau)},\quad \text{ and }\quad H_{j}(z,\tau) := H_{\ep_j}(z, t_j + \ep_j\tau).
$$
Passing to a subsequence if necessary, we deduce that 
$$
\Phi_{j}(x,z,\tau) \to \hat\Phi(x,z,\tau) \,\, \text{ in }C_{loc}(\bar{D}\times \bar I \times\mathbb{R}),\, \,\text{ and }\,\, H_{j}(z,\tau) \to \hat H(z,\tau) \,\,\text{ in }C_{loc}(\bar I \times\mathbb{R}),
$$
where the limit $(\hat\Phi( x,z,\tau),\hat H(\tau))$ is a pair of the normalized Floquet principal bundle, i.e.
$$
(\hat\Phi( x,z,\tau),\hat H(z,\tau)) = (\Phi_1(x,\tau;\hat c, z), H_1(x,\tau; \hat c, z)),
$$
for some
$$
\hat c(x,\tau) = \lim\limits_{\ep_j \to 0} (\rho_{\ep_j}(x,t_j + \ep_j \tau) - m(x)). 
$$
By the first statement of \eqref{eq:proph2e}, we deduce that the weight $\hat c$ satisfies \eqref{eq:proph2d}, and hence the limit $\hat H$ of $H_{\ep_j}$ (in the $C_{loc}(\mathbb{R})$ topology) satisfies \eqref{eq:proph2b}. 
Moreover, recalling \eqref{eq:uniformbounds}, we have 
$$
\sup_j  \|H_j(\cdot,0)\|_{C^3(\bar I)} 
{ =} 
\sup_j \|H_{\ep_j}(\cdot, t_j)\|_{C^3(\bar I)} \leq C, 
$$
we deduce that $H_{j}(\cdot,0) \to \hat{H}(\cdot,0)$ in $C^2(\bar I)$. Hence
$$
\lim_{\ep_j \to 0} \| H_{\ep_j}(\cdot,t_j) - \lambda(\cdot,\hat{z})\|_{C^2(\bar I)} =  \| \hat{H}(\cdot,0) - \lambda(\cdot,\hat{z})\|_{C^2(\bar I)} <\eta,
$$
where the last inequality holds since $\hat{H}$ satisfies \eqref{eq:proph2b}. 
This contradicts the second statement of \eqref{eq:proph2e}.
\end{proof}
\begin{corollary}\label{cor:h2}
For each $\eta_0>0$, there exists $\delta_1>0$ independent of $\hat{z}$ and $\hat{t}$ such that {$V_1(z)$ satisfies the condition \eqref{eq:convexity} of Proposition {\rm\ref{prop:perturbation}}}, 
and if
\begin{equation}\label{eq:corh2.1}
\sup_{\hat t - 3\sqrt\ep \leq t \leq \hat t  - 2\sqrt\ep} \|u_\ep(x,z,t) - V_1(z)\|_{C(\bar D \times \bar I)} \to 0 \quad \text{ as }\ep \to 0,
\end{equation}
then 
\begin{equation}\label{eq:3.6.1}
\liminf_{\ep \to 0} \inf_{I \times [\hat t, \hat t + \delta_1]}\partial^2_{zz}H_\ep(z,t) \geq \delta_1>0.
\end{equation} 
\end{corollary}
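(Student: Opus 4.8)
Our target \eqref{eq:3.6.1} will follow by combining a $C^2$‑closeness statement for the effective Hamiltonian with the uniform convexity of $\lambda(\cdot,\hat z)$ built into \textbf{(H1)}. Since \textbf{(H1)} gives $\partial^2_{zz}\lambda(z,\hat z)\ge 2K_\lambda$ for every $z\in I$ (this is $\partial^2_{z_1}\lambda(z,\hat z)$, bounded below in \textbf{(H1)}), it is enough to produce $\delta_1\in(0,K_\lambda]$, depending only on $K_\lambda$, on the convexity modulus $\eta_0$ of $V_1$ in \eqref{eq:convexity}, and on the structural data $(D,m,\alpha,\beta)$ — and in particular \emph{not} on $\hat z,\hat t$ — such that
\[
\limsup_{\ep\to 0}\ \sup_{I\times[\hat t,\,\hat t+\delta_1]}\big\|H_\ep(\cdot,t)-\lambda(\cdot,\hat z)\big\|_{C^2(\bar I)}\ \le\ \tfrac14 K_\lambda .
\]
Indeed, this forces $\partial^2_{zz}H_\ep(z,t)\ge 2K_\lambda-\tfrac14 K_\lambda-o(1)\ge K_\lambda\ge\delta_1$ on $I\times[\hat t,\hat t+\delta_1]$ for all small $\ep$, which is \eqref{eq:3.6.1}. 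Throughout I assume, as is implicit in \eqref{eq:corh2.1}, that $\hat t\ge 3\sqrt\ep$ and that $V_1$ satisfies \eqref{eq:convexity} with $\eta_1=\eta_0$.

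The first step is to transfer the information of \eqref{eq:corh2.1} from the trait variable to the spatial equilibrium, namely to show that for every small $\delta>0$ and $\delta_1\le\delta$,
\[
\limsup_{\ep\to 0}\ \sup_{[\hat t-\sqrt\ep,\ \hat t+\delta_1]}\big\|\rho_\ep(\cdot,t)-\theta_{\hat z}(\cdot)\big\|_{L^2(D)}\ \le\ C_0\,\delta^{1/3},
\]
with $C_0$ independent of $\hat z,\hat t$. I would apply Proposition~\ref{prop:perturbation} with its ``$\hat t$'' taken to be $\hat t-2\sqrt\ep$ (which is $\ge\ep$, and whose hypothesis follows from \eqref{eq:corh2.1} because $[\hat t-2\sqrt\ep-\ep,\hat t-2\sqrt\ep]\subset[\hat t-3\sqrt\ep,\hat t-2\sqrt\ep]$ for $\ep<1$), obtaining the ``first moment'' bound $\big\|\int_I(z-\hat z)\partial^2_x n_\ep\,dz\big\|_{C(\bar D)}=O(\delta^{1/3})$ and, from the comparison argument in its proof, the concentration estimate $n_\ep(x,z,t)\le e^{-c(\eta_0)\delta^{2/3}/\ep}$ for $|z-\hat z|>\delta^{1/3}$, both valid on a time interval comfortably containing $[\hat t-2\sqrt\ep,\hat t+\delta_1]$ once $\delta_1\le\delta$. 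Integrating \eqref{eq:1.1} over $z\in I$ (the Neumann condition in $z$ killing the boundary term) gives $\ep\partial_t\rho_\ep-\alpha(\hat z)\Delta_x\rho_\ep=\rho_\ep(m-\rho_\ep)+F_\ep$ with $F_\ep=\int_I(\alpha(z)-\alpha(\hat z))\Delta_x n_\ep\,dz$; Taylor‑expanding $\alpha$ about $\hat z$, the linear part of $F_\ep$ is $O(\delta^{1/3})$ in $C(\bar D)$ by the first‑moment bound, and the quadratic remainder is $O(\delta^{2/3})$ after splitting the $|z-\hat z|^2$‑weighted integral at $|z-\hat z|=\delta^{1/3}$ and using Lemma~\ref{lem:localholder} together with the concentration estimate, so $\|F_\ep\|_{C(\bar D)}=O(\delta^{1/3})$ there. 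Feeding this into Lemma~\ref{lem:Lp} with $z_1(t)\equiv\hat z$ (so $\theta_{z_1(t)}\equiv\theta_{\hat z}$) and $p=2$, using the a priori bound $\|\rho_\ep(\cdot,\hat t-2\sqrt\ep)-\theta_{\hat z}\|_{L^2(D)}\le C$ of Proposition~\ref{prop:apriorisum} and $e^{-(t-(\hat t-2\sqrt\ep))/(\ep C_2)}\le e^{-1/(C_2\sqrt\ep)}$ for $t\ge\hat t-\sqrt\ep$, yields the displayed $L^2$‑estimate.

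The second step converts $\rho_\ep\approx\theta_{\hat z}$ into $\|H_\ep(\cdot,t)-\lambda(\cdot,\hat z)\|_{C^2(\bar I)}$ being small, by re‑running the compactness argument of the proof of Proposition~\ref{prop:h2} with the short interval $[\hat t-\sqrt\ep,\hat t+\delta_1]$ and the \emph{constant} trait $\hat z$ in place of $[\sqrt\ep,T]$ and $\bar z_\ep(\cdot)$. Were the $\limsup$ in the first paragraph to exceed $\tfrac14K_\lambda$, one would extract $\ep_j\to0$, $t_j\in[\hat t_{\ep_j},\hat t_{\ep_j}+\delta_1]$, $\hat z_{\ep_j}\to z^*$ with $\|H_{\ep_j}(\cdot,t_j)-\lambda(\cdot,\hat z_{\ep_j})\|_{C^2(\bar I)}$ bounded away from $0$; rescaling the Floquet bundle of Proposition~\ref{prop:A3} about $t_j$, the uniform bounds \eqref{eq:uniformbounds} and Corollary~\ref{cor:apriorisum} give $C_{loc}$‑precompactness, and the limit is the normalized principal Floquet bundle $(\hat\Phi,\hat H)$ of the limit weight $\hat c=\lim_j\big(m-\rho_{\ep_j}(\cdot,t_j+\ep_j\cdot)\big)$. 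By the first step, $\hat c$ satisfies condition \eqref{eq:proph2d} (with $\hat z$ there replaced by $z^*$ and $\nu$ by a fixed multiple of $C_0\delta^{1/3}$), so — choosing $\delta$ small enough that $C_0\delta^{1/3}$ falls below the threshold $\nu(\tfrac14K_\lambda)$ furnished by that proof — one gets $\|\hat H(\cdot,0)-\lambda(\cdot,z^*)\|_{C^2(\bar I)}<\tfrac14K_\lambda$; the uniform $C^3(\bar I)$ bound in \eqref{eq:uniformbounds} upgrades $H_{\ep_j}(\cdot,t_j)\to\hat H(\cdot,0)$ to $C^2(\bar I)$ convergence, and $\lambda(\cdot,\hat z_{\ep_j})\to\lambda(\cdot,z^*)$ in $C^2(\bar I)$, a contradiction. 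Setting $\delta_1:=\min\{\delta,K_\lambda\}$ (a choice depending only on $K_\lambda,\eta_0$ and the structural data) completes the proof.

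I expect the main obstacle to be the first step: pinning $\rho_\ep$ next to $\theta_{\hat z}$ on the \emph{short}, $\ep$‑dependent window $[\hat t-\sqrt\ep,\hat t+\delta_1]$ requires simultaneously the first‑moment concentration bound of Proposition~\ref{prop:perturbation}, the $O(\ep)$‑timescale spatial relaxation of Lemma~\ref{lem:Lp}, and a careful treatment of the quadratic Taylor remainder in $F_\ep$ through the exponential smallness of $n_\ep$ off $\{z\approx\hat z\}$. The key structural subtlety — needed so that a \emph{single} $\delta_1$ serves all $\hat z,\hat t$, as the iteration in Section~\ref{sec:main} demands — is the uniformity in $\hat z,\hat t$ of all constants entering Propositions~\ref{prop:perturbation},~\ref{prop:A2},~\ref{prop:h2}, Lemma~\ref{lem:Lp}, the bounds \eqref{eq:uniformbounds} and Corollary~\ref{cor:apriorisum}; a further point, handled automatically by the eternal‑solution characterization of the Floquet bundle, is that the second step need not control $\rho_\ep$ in the far past of $t_j$.
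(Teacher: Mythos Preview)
Your proposal is correct and follows essentially the same three-step route as the paper: apply Proposition~\ref{prop:perturbation} (shifted by $2\sqrt\ep$) to control the first trait-moment of $\partial^2_x n_\ep$, feed this into Lemma~\ref{lem:Lp} with $z_1(t)\equiv\hat z$ to get $\rho_\ep\approx\theta_{\hat z}$ in $L^2$ on $[\hat t-\sqrt\ep,\hat t+\delta_1]$, and then invoke Proposition~\ref{prop:h2} (with constant trait $\hat z$) together with the convexity in \textbf{(H1)} to conclude. You fill in more detail than the paper's proof does --- in particular the Taylor expansion of $\alpha$ about $\hat z$ and the separate treatment of the quadratic remainder in $F_\ep$ via the concentration estimate from the proof of Proposition~\ref{prop:perturbation} --- but the underlying argument is the same.
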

\begin{proof}
Apply Proposition \ref{prop:perturbation}, then \eqref{eq:corh2.1} implies
$$
\limsup_{\ep \to 0} \sup_{[\hat t - 2\sqrt\ep, \hat t + 2\delta_1]} \|\int_I (z - \hat{z}) \partial^2_x n_\ep(x,z,t)\,dz\|_{L^2(D)} < C|\delta_1|^{1/3}
$$
for each $\delta_1>0$ small. 
Applying Lemma \ref{lem:Lp}, we again obtain $C'$ independent of $\ep$ and $\delta_1$ such that for each $\delta_1>0$ small, 
$$
\limsup_{\ep \to 0}\sup_{\hat t - \sqrt\ep \leq t \leq \hat t + 2\delta_1} \|\rho_\ep(\cdot,t) - \theta_{\hat{z}}(\cdot)\|_{L^2(D)}  < C'|\delta_1|^{1/3}.
$$
By choosing $\delta_1$ small, 
{ Proposition \ref{prop:h2} says that 
$$
\sup_{[\hat{t}, \hat{t}+\delta_1]}\|\partial^2_{z_1z_1} \lambda(\cdot, \hat z) - \partial^2_{z_1z_1}H_\ep(\cdot, t)\|_{C(\overline{U})} \leq \frac{1}{2} \inf_{I\times I} \lambda_{z_1z_1},
$$
since by {\bf(H1)}
the last term is a fixed positive constant. Hence, by reducing $\delta_1>0$ if necessary,}  
\eqref{eq:3.6.1} holds. 
%
%
%
%
%
\end{proof}

\section{Proof of Theorem \ref{thm:main}}  \label{sec:main}

Define 
$$
T^*:= \sup\{ T >0\, :\, 
\liminf_{\ep \to 0} \inf_{I \times [2\sqrt\ep,T]}\partial^2_{zz}H_\ep(z,t) >0\}.
$$
Our purpose is to prove that $T^*=\infty$, which comes with the desired convergence results.

Applying Proposition \ref{prop:traject2}, the two quantities $\bar{z}_\ep(t)$ and $V_\ep(z,t)$ are well-defined in the time interval $[2\sqrt\ep,T^*]$, for all sufficiently small $\ep$. We will also define for convenience
$$
V_\ep(z,t) = V_0(z),\quad \text{ and }\quad \bar{z}_\ep(t) = \bar{z}_0 \quad \text{ for }z \in I, t \in [0,2\sqrt\ep].
$$


\noindent {\bf Step 1.} In this step, we show that $T^* \geq \delta_1$, where $\delta_1$ is as given in Corollary \ref{cor:h2}.

By constructing super- and sub-solutions of the form
$$
V_0(z) - \frac{1}{2}\ep|\log\ep| \pm C\left\{(t + \ep) 
+\frac{1}{\ep} \left( [z - (b-\sqrt\ep)]^3_+  + [a + \sqrt\ep - z]^3_+ \right)\right\},
$$
it follows by comparison using \eqref{eq:v} that
\begin{equation}\label{eq:require'}
\sup_{0 \leq t \leq 3\sqrt\ep} \|u_\ep(x,z,t) - V_0(z)\|_{C(\bar D \times \bar I)} \to 0.
\end{equation}
Based on \eqref{eq:require'}, one can apply Corollary \ref{cor:h2} to yield $T^* \geq \delta_1$.

\medskip

\noindent {\bf Step 2.}  
In this step, we show that as $\ep\to 0$, 
\begin{equation}\label{eq:step3}
\sup_{D \times I \times [0,T^*]} \left| u_\ep(x,z,t) - V_\ep(z,t)\right| \to 0.
\end{equation}
(Note that the convergence in the time interval $[0,3\sqrt\ep]$ has been done in Step 1.)

Next, we show
\begin{equation}\label{eq:step3.2}
\sup_{D \times I \times [2\sqrt\ep, T^*]} \left| u_\ep(x,z,t) - V_\ep(z,t) -   \int_{2\sqrt\ep}^t H_\ep(\bar{z}_\ep(s),s)~ds\right| \to 0.
\end{equation}
To this end, define
$$
\overline{U}(z,t) = V_\ep(z,t) + \int_{2\sqrt\ep}^t H_\ep(\bar{z}_\ep(s),s)~ds + \mu_\ep + \frac{C}{\ep}\left\{[(a +\sqrt\ep) - z]^3_+ + [z - (b -\sqrt\ep)]^3_+\right\}
$$
and, with $\mu_\ep = \|u_\ep(x,z,2\sqrt\ep) - V_0(z)\|_{C(\overline{D} \times \overline{I})} = o(1)$, 
$$
\underline{U}(z,t) = V_\ep(z,t) + \int_{2\sqrt\ep}^t H_\ep(\bar{z}_\ep(s),s)~ds - \mu_\ep - \frac{C}{\ep}\left\{[(a +\sqrt\ep) - z]^3_+ - [z - (b -\sqrt\ep)]^3_+\right\}
$$
 and \eqref{eq:step3.2} follows by comparison using \eqref{eq:v}.

We now claim that 
\begin{equation}\label{eq:step3.3}
\sup_{2\sqrt\ep \leq t\leq  T^* } \left|\int_{2\sqrt\ep}^t H_\ep(\bar{z}_\ep(s),s)~ds\right| = o(1).
\end{equation}
Indeed, \begin{align*}
\int_D \rho_\ep(x,t)~dx &= \iint_{D \times I} \exp\left(-\frac{u_\ep(x,z,t)}{\ep}\right)dzdx \\ 
&= \frac{1}{\sqrt\ep} \iint_{D \times I} \exp\left( -\frac{V_\ep(z,t)}{\ep} + \frac{\int_{2\sqrt\ep}^tH_\ep(\bar{z}_\ep(s),s)ds + o(1)}{\ep} \right)dzdx.
\end{align*}
Since  $V_\ep(z,t) = -c(t)(z-\bar{z}_\ep(t))^2 + O(|z-\bar{z}_\ep(t)|^3)$, we have
$$
0<\liminf_{\ep\to 0}
\frac{1}{\sqrt{\ep}}
\iint_{D\times I} \exp\left(-\frac{V_\ep(z,t)}{\ep}\right)dzdx
\leq 
\limsup_{\ep\to 0}
\frac{1}{\sqrt{\ep}}
\iint_{D\times I} \exp\left(-\frac{V_\ep(z,t)}{\ep}\right)dzdx
<+\infty
$$
and \eqref{eq:step3.3} follows from this, and $\frac{1}{C} \leq \int_D \rho_\ep(x,t)~dx \leq C$ (from Proposition \ref{prop:apriorisum}).

Combining \eqref{eq:require'}, \eqref{eq:step3.2} and \eqref{eq:step3.3}, we deduce \eqref{eq:step3}.

\medskip

\noindent {\bf Step 3.}  
In this step, we show that
\begin{equation}\label{eq:step4}
\sup_{\sqrt\ep \leq t \leq T^*} \left\| \rho_\ep(\cdot,t) - \theta_{\bar{z}_\ep(t)}\right\|_{C(\bar D)} \to 0 \quad \text{ as }\ep \to 0.
\end{equation}

First, we deduce from \eqref{eq:step3} and Proposition \ref{prop:perturbation} that
$$
\sup_{[\ep,T^*]} \left\| \int_I (z - \bar{z}_\ep(t)) \partial^2_x n_\ep (\cdot,z,t)\,dz\right\|_{C(\bar{D})} \to 0.
$$
Then by taking $t_1 = \ep$ and $t_2 \in [\sqrt\ep,T^*]$ in Lemma \ref{lem:Lp}, we deduce 
\begin{equation}\label{eq:step4.1}
\sup_{\sqrt\ep \leq t \leq T^*} \left\| \rho_\ep(\cdot,t) - \theta_{\bar{z}_\ep(t)}\right\|_{L^2(D)} \to 0.
\end{equation}
The estimate \eqref{eq:step4} follows by interpolating \eqref{eq:step4.1} with the uniform estimate $$
\sup\limits_{\sqrt\ep \leq t \leq T^*} \left\| \rho_\ep(\cdot,t) - \theta_{\bar{z}_\ep(t)}\right\|_{C^\beta(\bar D)} \leq C,
$$ 
which follows from Corollary \ref{cor:apriorisum}(b) and that $\|\bar{z}_\ep(\cdot)\|_{C^{0,1}} \leq C$ (by Proposition \ref{prop:traject2}).

\medskip

\noindent {\bf Step 4.}
By \eqref{eq:step4} and Proposition \ref{prop:h2}, we have
\begin{equation}\label{eq:step5.1}
\sup_{I \times [2\sqrt\ep, T^* - \sqrt\ep]} |H_{\ep}(z,t) - \lambda(z, \bar{z}_\ep(t))| \to 0.
\end{equation}
Since $(V_\ep(z,t),\bar{z}_\ep(t))$ is the unique viscosity solution of \eqref{eq:approxhj}, we may apply the stability theorem for viscosity solution, to deduce that, as $\ep \to 0$, $(V_\ep(z,t),\bar{z}_\ep(t))$ must converge to the unique viscosity solution $(V(z,t),\bar{z}(t))$ of \eqref{eq:limithj}, i.e. 
\begin{equation}\label{eq:step5.2}
\sup_{I \times [2\sqrt\ep, T^*]}| V_\ep(z,t) - {V}(z,t)| \to 0 \quad \text{ and }\quad \sup_{[2\sqrt\ep, T^*]}|\bar{z}_\ep(t))  - \bar{z}(t)| \to 0.
\end{equation}

\medskip
\noindent {\bf Step 5.}
We claim that $T^* = +\infty$. Suppose not, then $T^* < +\infty$. But then the above argument would allow us to extend the approximate trajectory by a fixed time-step $\delta_1>0$. This would yield a contradiction which proves that $T^* = +\infty$.

\medskip

\noindent {\bf Step 6.} {To conclude, it remains to show \eqref{eq:1.4.1} - \eqref{eq:1.4.3}. Indeed, \eqref{eq:1.4.3} follows from $\rho_\ep(x,t) \to  \theta_{\bar{z}_\ep(t)}(x)$ (by \eqref{eq:step4}) and $\bar{z}_\ep(t) \to \bar{z}(t)$ (by \eqref{eq:step5.2}). Next,  \eqref{eq:1.4.2} follows from 
$$
\sup_{D \times I \times [0,T^*]} \left| u_\ep(x,z,t) - V_\ep(z,t)\right| \to 0
$$ 
(by \eqref{eq:step3}) and $V_\ep(z,t) \to V(z,t)$ (by \eqref{eq:step5.2}). Finally, Proposition \ref{prop:traject} says for each $t$, $z\mapsto V(z,t)$ has a unique minimum point at $\bar{z}(t)$, so that
\begin{equation}\label{eq:1.4.1a}
n_\ep(x,z,t) = \exp\left(- \frac{V_\ep(z,t) + o(1)}{\ep} \right) \approx \delta_0(z-\bar{z}(t)) \rho_\ep(x, t),
\end{equation}
where the function $\rho_\ep(x,t)  = \int_I n_\ep(x,z,t)\,dz$ appeared since the above needs to be consistent upon integration over $z \in I$. Combining \eqref{eq:1.4.3} and \eqref{eq:1.4.1a}, we deduce \eqref{eq:1.4.1}.}

\section{Discussions and Generalizations}\label{sec:dis}

In Subsection \ref{sec:6.0}, we discuss a special feature of the effective Hamiltonian $\lambda$ that arises from evolution of random dispersal, as modeled by Laplacian in the spatial variable, which is the fact that $z_1 \mapsto \lambda(z_1,z_2)$ has the same monotonicity of $\alpha$, regardless of choice of $\alpha$ and $z_2$. Subsection \ref{sec:6.1} gives a concrete example of $\alpha(z)$ where {\bf(H1)} can be verified. Subsection \ref{sec:6.2} discusses the generalization to multi-dimensional traits, as motivated by the evolution of conditional dispersal, where the optimal trait is not necessarily the slowest dispersal rate anymore.

\subsection{Monotonicity of the effective Hamiltonian}\label{sec:6.0} 

The effective Hamiltonian can be written as
\begin{equation}\label{eq:Lambda_r}
\lambda(z_1,z_2) = \Lambda(\alpha(z_1),\alpha(z_2)),
\end{equation}
where 
$\Lambda(\alpha_1,\alpha_2)$ is the 
smallest eigenvalue of
\begin{equation}\label{eq:Lambda}
\left\{
\begin{array}{ll}
\alpha_1 \Delta  \Phi + (m(x)  - \Theta_{\alpha_2}(x)) \Phi + \Lambda \Phi = 0 &\text{ for }x \in D,\\
\partial_\nu \Phi = 0 &\text{ for }x \in \partial D,
\end{array}
\right.
\end{equation}
where $\Theta_\alpha(x)$ is the unique positive solution to
\begin{equation}\label{eq:Tambda}
\alpha \Delta \Theta + (m(x) - \Theta)\Theta = 0 \,\,\text{ in }D,\,\,\text{ and }\,\, \partial_\nu \Theta =0 \,\,\text{ on }\partial D.
\end{equation}
Then it is well-known that $\Lambda$ is smooth in $\mathbb{R}_+ \times \mathbb{R}_+$, and that $\partial_{\alpha_1}\Lambda(\alpha_1,\alpha_2) >0$ \cite{Altenberg,DHMP,Nicbms}. We may conclude the following immediately.
$$
{\rm sgn}(\lambda_{z_1}(z_1,z_2)) = \alpha'(z_1) \quad \text{ for all }z_1,z_2.
$$
Next, we observe that when $\alpha(z)$ satisfies {\bf(H1)}, it is necessarily U-shaped and has a unique minimum point, which is where the dominant trait eventually converges. 

\begin{lemma}\label{lem:6.1}
Suppose $\alpha(z)$ is chosen such that {\bf(H1)} holds, then there exists $z_{\min} \in {\rm Int}\, I$ such that  
\begin{equation}\label{eq:6.1a}
\alpha'(z) <0 \quad \text{ in }[a,z_{\min}),\quad \alpha'(z_{\min}) = 0 \quad \text{ and }\quad \alpha'(z) {>0} \quad \text{ in }(z_{\min}, b];
\end{equation}
and that $z_1 \mapsto \lambda(z_1,z_2)$ has the same minimum point $z_{\min}$ for each $z_2 \in [a,b]$, 
\begin{equation}\label{eq:6.1b}
\partial_{z_1}\lambda(z,z_2) <0 \quad \text{ in }[a,z_{\min}),\quad \partial_{z_1}\lambda(z_{\min},z_2) = 0 \quad \text{ and }\quad \partial_{z_1}\lambda(z,z_2){>0} \quad \text{ in }(z_{\min}, b].
\end{equation}
\end{lemma}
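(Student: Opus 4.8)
The plan is to reduce everything to the single fact, recalled just above the lemma, that $\Lambda(\alpha_1,\alpha_2)$ is smooth on $\mathbb{R}_+ \times \mathbb{R}_+$ and satisfies $\partial_{\alpha_1}\Lambda(\alpha_1,\alpha_2) > 0$ for all $\alpha_1,\alpha_2 > 0$. Since $\lambda(z_1,z_2) = \Lambda(\alpha(z_1),\alpha(z_2))$, the chain rule gives $\partial_{z_1}\lambda(z_1,z_2) = \partial_{\alpha_1}\Lambda(\alpha(z_1),\alpha(z_2))\,\alpha'(z_1)$, so the sign of $\partial_{z_1}\lambda(z_1,z_2)$ is exactly the sign of $\alpha'(z_1)$, independently of $z_2$. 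Thus \eqref{eq:6.1b} follows from \eqref{eq:6.1a} once the latter is established, and it suffices to prove the structural statement about $\alpha$ in \eqref{eq:6.1a}.

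First I would unpack what \textbf{(H1)} says about $\alpha$ via $\lambda$. Differentiating $\lambda(z_1,z_2) = \Lambda(\alpha(z_1),\alpha(z_2))$ twice in $z_1$ yields
\begin{equation*}
\partial^2_{z_1}\lambda(z_1,z_2) = \partial^2_{\alpha_1}\Lambda(\alpha(z_1),\alpha(z_2))\,(\alpha'(z_1))^2 + \partial_{\alpha_1}\Lambda(\alpha(z_1),\alpha(z_2))\,\alpha''(z_1).
\end{equation*}
The hypothesis $\partial^2_{z_1}\lambda(z_1,z_2) \geq 2K_\lambda > 0$ for all $(z_1,z_2)$, together with $\partial_{\alpha_1}\Lambda > 0$, forces, at any point $z_1$ where $\alpha'(z_1) = 0$, that $\alpha''(z_1) \geq 2K_\lambda/\partial_{\alpha_1}\Lambda > 0$; hence every critical point of $\alpha$ is a strict local minimum. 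A smooth function on a closed interval whose interior critical points are all strict local minima can have at most one such critical point (between two interior local minima there would have to be an interior local maximum), so $\alpha$ has at most one interior critical point. The two remaining sign conditions in \textbf{(H1)}, namely $\partial_{z_1}\lambda(a,a) < 0$ and $\partial_{z_1}\lambda(b,b) > 0$, translate via the chain-rule identity into $\alpha'(a) < 0$ and $\alpha'(b) > 0$. By the intermediate value theorem $\alpha'$ vanishes somewhere in $(a,b)$, so there is exactly one interior critical point $z_{\min}$, it is a strict local minimum, and $\alpha' < 0$ on $[a,z_{\min})$ while $\alpha' > 0$ on $(z_{\min},b]$ — otherwise $\alpha'$ would change sign again and create a second critical point. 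This is precisely \eqref{eq:6.1a}.

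Finally I would assemble \eqref{eq:6.1b}: for any fixed $z_2 \in [a,b]$ and any $z \in [a,b]$, ${\rm sgn}(\partial_{z_1}\lambda(z,z_2)) = {\rm sgn}(\alpha'(z))$ by the chain rule and $\partial_{\alpha_1}\Lambda > 0$, and substituting the sign pattern of $\alpha'$ from \eqref{eq:6.1a} gives $\partial_{z_1}\lambda(z,z_2) < 0$ on $[a,z_{\min})$, $\partial_{z_1}\lambda(z_{\min},z_2) = 0$, and $\partial_{z_1}\lambda(z,z_2) > 0$ on $(z_{\min},b]$, so $z_1 \mapsto \lambda(z_1,z_2)$ has its unique minimum at the same point $z_{\min}$ for every $z_2$. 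I do not expect a genuine obstacle here; the only point requiring a little care is the elementary topological argument that a smooth function on an interval all of whose interior critical points are strict local minima has at most one interior critical point, which I would phrase cleanly rather than by a picture. One should also note that the boundary conditions $\alpha'(a) < 0$, $\alpha'(b) > 0$ guarantee $z_{\min}$ lies in the open interval ${\rm Int}\,I$, consistent with the statement.
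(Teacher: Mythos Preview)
Your argument is correct and rests on the same two ingredients as the paper: the chain-rule identity $\partial_{z_1}\lambda(z_1,z_2)=\partial_{\alpha_1}\Lambda(\alpha(z_1),\alpha(z_2))\,\alpha'(z_1)$ together with $\partial_{\alpha_1}\Lambda>0$, and the endpoint sign conditions in \textbf{(H1)}.

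The only real difference is the order of the reduction. You first derive structural information about $\alpha$ (every critical point has $\alpha''>0$, hence at most one interior critical point) and then transfer it to $\lambda$. The paper instead works directly with the strict convexity of $z_1\mapsto\lambda(z_1,z_2)$ coming from \textbf{(H1)}: since $\partial_{z_1}^2\lambda\geq 2K_\lambda>0$, the map $z_1\mapsto\partial_{z_1}\lambda(z_1,z_2)$ is strictly increasing, so combined with the endpoint signs it has exactly one zero with the required sign pattern; the sign identity then pushes this back to $\alpha$. The paper's route is marginally shorter because it avoids your second-derivative computation and the ``all critical points are strict minima $\Rightarrow$ at most one critical point'' argument, but what you wrote is entirely valid and arguably more self-contained about $\alpha$ itself.
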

\begin{proof}
Differentiating \eqref{eq:Lambda_r} with respect to $z_1$, evaluating at $(z_1,z_2)=(a,a)$, and using  {\bf(H1)}, we have
$$
0 > \partial_{z_1}\lambda(a,a) = \partial_{\alpha_1}\Lambda(\alpha(a),\alpha(a)) \alpha'(a).
$$
Using also that $\partial_{\alpha_1}\Lambda >0$, we conclude that $\alpha'(a) <0$ and similarly $\alpha'(b) >0$. Thus $\alpha$ has at least one interior minimum point. Moreover, for each fixed $z_2$, the mapping 
$$
z_1 \mapsto \lambda(z_1,z_2)= \Lambda(\alpha(z_1),\alpha(z_2))
$$
is convex in $z_1$, it has the desired monotonicity property in $z_1$. This proves the second statement. Still because $\partial_{\alpha_1}\Lambda>0$, we deduce that $\alpha$ also has the desired monotonicity as well. 
\end{proof}

\subsection{An explicit example of $\alpha(z)$}\label{sec:6.1}

We can construct explicitly some U-shaped dispersion $\alpha(z)$, so that the associated $\lambda(z_1,z_2)$ satisfies the convexity assumption {\bf (H1)}. 
For any given interval $I_0 = [\alpha_0,\alpha_0 + L_0]$
with $\alpha_0,\,  L_0>0$, define 
$$
\alpha(z) = \alpha_0 + \frac{1}{k_0} \int_0^z \tan z'\,dz' \quad \text{ for }z \in [-z_M, z_M],
$$
where $k_0:= \sup\limits_{(\alpha_1,\alpha_2) \in I_0 \times I_0} \frac{|\partial^2_{\alpha_1}\Lambda|}{\partial_{\alpha_1}\Lambda}$ and $z_M \in (0,\pi/2)$ is the unique number such that $\int_0^{z_M} \tan z\,dz = k_0L_0$. With this choice of $\alpha$, the associated $\lambda(z_1,z_2) = \Lambda(\alpha(z_1),\alpha(z_2))$ satisfies 
$$
\pm \partial_{z_1} \lambda(\pm z_M,z_2) = \pm \partial_{\alpha_1}\Lambda(\alpha(\pm z_M), \alpha(z_2)) \alpha'(\pm {z_M}) >0,
$$
and, thanks to the definition of $k_0$, we compute
\begin{align*}
\partial^2_{z_1} \lambda(z_1,z_2) &\geq  (\partial_{\alpha_1}\Lambda ) \alpha''(z_1) -  |\partial^2_{\alpha_1}\Lambda| (\alpha'(z_1))^2\\
&= \frac{\partial_{\alpha_1}\Lambda }{k_0} \left[ (\tan z_1)' - \frac{1}{k_0} \frac{ |\partial^2_{\alpha_1}\Lambda|}{\partial_{\alpha_1}\Lambda } \left|\tan z_1\right|^2 \right] \\
&\geq \frac{\partial_{\alpha_1}\Lambda }{k_0} \left[ (\tan z_1)' - \left|\tan z_1\right|^2 \right]  
{\geq}\frac{1}{k_0}\inf_{I_0\times I_0} \partial_{\alpha_1}\Lambda >0.
\end{align*}

\subsection{Evolution of conditional dispersal}\label{sec:6.2}

A limitation in our present study comes from the use of convexity to obtain various regularity results, in particular for the solutions of the constrained Hamilton-Jacobi equation. Here we present an example which motivates to look for more general methods. We have in mind the following model considering evolution of conditional dispersal, see \cite{Hao2017} and the references therein. 
\begin{equation}\label{eq:conditional}
    \epsilon \partial_t n_\ep = \alpha(z) \Delta_x u - \beta(z) \nabla \cdot[ n_\ep \nabla m(x)] + n_\ep(x)(m(x) - \rho_\ep(x,t)) + \ep^2 \partial^2_z n_\ep 
\end{equation}
for $x \in D$, $z\in I$ and $t>0$, with appropriate boundary conditions. Here $\alpha$ is the rate of unconditional dispersal, whereas $\beta$ is the rate of the directed movement up the gradient of the prescribed function $m(x)$. One or both $\alpha,\beta$ can be dependent on the trait variable $z$. 

The effective Hamiltonian is again given by the invasion exponent $\lambda(z_1,z_2)$ and can be similarly defined. 
Assuming $m(x) = x$, $\alpha(z) = z$ and $\beta = q$ for some small constant $q$, it can be shown that 
\eqref{eq:conditional} possesses at least one positive equilibrium solution $\tilde{u}_\ep$, which tends to a Dirac measure supported at two distinct points on the trait interval. In particular, the corresponding effective Hamiltonian is nonconvex. We conjecture that the time dependent problem supports moving Dirac-concentrations supported at two points $(\bar{z}_1(t),\bar{z}_2(t))$, which then converges to their equilibrium position.
\\[5pt]
{\bf Acknowledment.} K.-Y. L. is partially supported by National Science Foundation under grant DMS-1853561.  
K.-Y. L. and B.P. have received funding from the European Research Council (ERC) under the European Union's Horizon 2020 research and innovation programme (grant agreement No 740623). Part of this research is carried out while K.-Y. L. and B.P. are visiting the Institute for Mathematical Sciences at the National University of Singapore, and the 
Institut Henri Poincar\'{e} (IHP). The authors acknowledge support of the  (UAR 839 CNRS-Sorbonne Universit\'{e}), and LabEx CARMIN (ANR-10-LABX-59-01) for the visit at IHP.


\appendix

\section{A priori Estimates}\label{sec:A}

\begin{proposition}\label{prop:apriori}
There exists $\widetilde{C}_1>1$, depending on $V_0(z), m(x), \underline\alpha, \overline\alpha, D$ but independent of~$\ep$, such that
$$
\frac{1}{\widetilde{C}_1} \leq \rho_\ep(x,t) \leq \widetilde{C}_1 \quad \text{ for }~x \in D,~ t \geq 0.
$$
\end{proposition}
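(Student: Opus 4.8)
The plan is to reduce the two-sided bound to estimates on $\rho_\ep$ alone, and then to split it into a pointwise upper bound and a pointwise lower bound. \emph{Reduction and initial data.} Integrating \eqref{eq:1.1} in $z$ over $I$ and using $\partial_z n_\ep = 0$ on $\partial I$ (which annihilates the $\ep^2\partial^2_z n_\ep$ term) yields
\begin{equation*}
\ep\partial_t\rho_\ep = \Delta_x q_\ep + \rho_\ep\big(m(x)-\rho_\ep\big),\qquad q_\ep(x,t):=\int_I\alpha(z)n_\ep(x,z,t)\,dz,
\end{equation*}
with $\partial_\nu\rho_\ep=\partial_\nu q_\ep=0$ on $\partial D$ and $\underline\alpha\rho_\ep\le q_\ep\le\overline\alpha\rho_\ep$ pointwise. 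From {\bf(H2)}, $u_\ep(\cdot,\cdot,0)=V_0(z)+\tfrac12\ep\log\ep+O(\ep)$ in $C^2(\bar D\times\bar I)$, so $n_{\ep,0}(x,z)=\ep^{-1/2}e^{-V_0(z)/\ep}e^{O(1)}$; since $V_0$ attains its minimum value $0$ at an interior point with $V_0''\ge 2K_0>0$, Laplace's method gives $\rho_{\ep,0}(x)=\ep^{-1/2}\int_I e^{-V_0/\ep}e^{O(1)}\,dz\asymp 1$, hence $C_*^{-1}\le\rho_{\ep,0}(x)\le C_*$ on $\bar D$ for some $C_*$ independent of $\ep$.

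\emph{Total mass.} Put $P(t):=\int_D\rho_\ep(x,t)\,dx$. Integrating the $\rho_\ep$-equation over $D$ kills $\int_D\Delta_x q_\ep\,dx$ (Neumann), and using $|D|=1$ together with $\int_D\rho_\ep^2\,dx\ge(\int_D\rho_\ep\,dx)^2$ one gets $\ep P'(t)=\int_D\rho_\ep(m-\rho_\ep)\,dx\le\|m\|_\infty P-P^2$; comparison with the logistic ODE gives $P(t)\le\max\{P(0),\|m\|_\infty\}=:C_0$ for all $t\ge 0$. In the other direction, once a pointwise bound $\rho_\ep\le M$ is known one has $\int_D\rho_\ep^2\,dx\le MP$ and $\int_D\rho_\ep\,m\,dx\ge(\min_D m)P$ with $\min_D m>0$ by {\bf(M)}, which --- after separating the set where $\rho_\ep$ is small (there the growth rate $m-\rho_\ep$ is $\ge\tfrac12\min_D m>0$) --- yields a matching lower differential inequality and $P(t)\ge c_0>0$ for all $t\ge 0$, a standard persistence argument.

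\emph{Pointwise upper bound --- the main obstacle.} The hard part is to upgrade $P(t)\le C_0$ (a bound in $L^\infty_t L^1_x$) to $\rho_\ep(x,t)\le M$ uniformly in $x,t,\ep$. Neither the maximum principle nor a comparison function works directly: by the nonlocality $\rho_\ep=\int_I n_\ep\,dz$, a large value of $n_\ep$ at one trait need not make $\rho_\ep$ large, so $n_\ep$ cannot be dominated by any $z$-independent (or Gaussian-in-$z$) supersolution --- in fact, because $\int_D m\,dx>0$, the stationary operator $-\alpha(z)\Delta_x-\ep^2\partial^2_z-m(x)$ with Neumann conditions has negative principal eigenvalue and hence admits no bounded positive supersolution of ``$\ge 0$''. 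Likewise, the diffusion $\Delta_x q_\ep$ in the $\rho_\ep$-equation is a $z$-average, not a multiple of $\Delta_x\rho_\ep$, so the naive $L^p$ energy estimate for $\rho_\ep$ does not close (its gradient term can have the wrong sign). My plan is to exploit the very fast $x$-diffusion, coefficient $\alpha(z)/\ep\ge\underline\alpha/\ep$: in $x$ the equation relaxes on the time scale $O(\ep)$, which pins the spatial oscillation of $\rho_\ep$ and effectively reduces $\rho_\ep$ to the nearly $x$-constant quantity $P(t)\le C_0$. Turning this into a proof requires combining the mass bound with interior parabolic $L^p$ and Schauder estimates for $n_\ep$ in the parabolically rescaled variables (in the spirit of Section~\ref{sec:2}), a De Giorgi--Moser-type iteration, and care with the initial layer, since $\rho_\ep$ has no a priori time regularity. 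I expect this step to consume most of the effort.

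\emph{Pointwise lower bound, and conclusion.} Once $\rho_\ep\le M$ and $P(t)\ge c_0$ are in hand, the coefficient $m-\rho_\ep$ of the \emph{linear} equation satisfied by $n_\ep$ is uniformly bounded and the equation is uniformly parabolic in $x$; a parabolic Harnack inequality for $n_\ep$ then propagates the guaranteed total mass $c_0$ across all of $D$ on the fast time scale $O(\ep)$, while the $z$-mass is conserved, forcing $\rho_\ep(x,t)=\int_I n_\ep(x,z,t)\,dz\ge c_1>0$ uniformly in $x$; the instant $t=0$ is covered by the initial-data bound. Taking $\widetilde C_1:=\max\{M,c_1^{-1},C_*\}$, which depends only on $V_0,m,\underline\alpha,\overline\alpha,D$, finishes the proof.
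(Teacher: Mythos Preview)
Your outline matches the paper's six-step proof closely (logistic ODE for the mass, rescaling of $n_\ep$ plus local parabolic estimates for the pointwise upper bound, Harnack for the pointwise lower bound), but two places need an ingredient you have not named. For the pointwise upper bound, the device that avoids the circularity of needing $\rho_\ep\in L^\infty$ before applying parabolic estimates is to \emph{drop} the nonpositive term $-\rho_\ep n_\ep\le 0$: the rescaled $N_\ep(x,y,\tau)=n_\ep(x,z_1+\ep y,t_1+\ep\tau)$ is then a subsolution of $\partial_\tau N-\alpha\Delta_x N-\partial_y^2 N= m\,N$, a linear equation with the bounded coefficient $m$ alone, and the local maximum principle (De~Giorgi--Moser) yields directly the $L^1_x\!\to\!L^\infty_x$ bound $\sup_D\rho_\ep(\cdot,t_1)\le C_1\sup_{[t_1-\ep,\,t_1]}\int_D\rho_\ep\,dx$ for every $t_1\ge\ep$. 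No Schauder or a~priori time regularity of $\rho_\ep$ is needed at this stage.

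Your mass-lower-bound sketch does not close as written: from $\rho_\ep\le M$ you only get $\ep P'\ge(\min_D m-M)P$, which allows decay on the fast scale, and ``separating the set where $\rho_\ep$ is small'' does not help since $\rho_\ep$ may be large on a set carrying all the mass. The paper instead \emph{reuses the $L^1\!\to\!L^\infty$ bound above}: if $P$ first drops to a small level $\delta$ at some $t_1>\ep$, then $\ep P'\ge -C_0P$ forces $P\le e^{C_0}\delta$ on $[t_1-\ep,t_1]$, whence $\|\rho_\ep(\cdot,t_1)\|_\infty\le C_1e^{C_0}\delta<\tfrac12\inf_D m$ for $\delta$ small, so $m-\rho_\ep>0$ everywhere and $P'(t_1)>0$, a contradiction. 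Finally, Laplace's method only covers $t=0$, while the rescaled estimate above needs a past window of length $\ep$; the paper bridges the layer $t\in[0,\ep]$ by comparing $u_\ep=-\ep\log n_\ep$ with explicit barriers $V_0(z)-\tfrac12\ep|\log\ep|\pm C(t+\ep)$ (plus a cubic boundary corrector in $z$), which keeps $\rho_\ep$ between fixed positive constants on $[0,\ep]$.
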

\begin{proof}
We prove the proposition in six steps.

\noindent {\bf Step 1.} There exists $C$ such that $\sup_{t \geq 0} \|\rho_\ep(\cdot,t)\|_{L^1(D)} \leq C$.

Integrate \eqref{eq:1.1} over $(x,z) \in D\times I$, and use the Cauchy-Schwarz inequality, we obtain
\begin{equation}\label{eq:L1eq}
\ep \frac{d}{dt} \int_D \rho_\ep(x,t)~dx = \int_D \rho_\ep(x,t)(m(x) - \rho_\ep(x,t))~dx 
\leq   \int_D \rho_\ep~dx \big( m^* - \frac{1}{|D|} \int_D\rho_\ep~dx\big)
\end{equation}
where $m^*=\sup_D m$. Hence we deduce from the differential inequality that
\begin{equation}\label{eq:L1ineq}
\int_D \rho_\ep(x,t)~dx \leq \max\left\{\int_D \rho_\ep(x,0)~dx, m^*|D|\right\}\quad \text{ for all }t \geq 0.
\end{equation}
 It remains to estimate the initial total population $\int_D \rho_\ep(x,0)~dx$ by {\bf (H2)}: 
\begin{align*}
\int_D \rho_\ep(x,0)~dx &= \int_D \int_I \exp(-\frac{u_\ep(x,z,0)}{\ep})~dzdx\\ 
&\leq \frac{C}{\sqrt\ep} \int_I \exp
\left(-\frac{V_0(z)}{\ep}\right)~dz \leq \frac{C}{\sqrt\ep} \int_I \exp\left(-\frac{{K_1}|z-\bar{z}_0|^2}{\ep}\right)~dz \leq C.
\end{align*}
Thus Step 1 is a direct consequence of \eqref{eq:L1ineq}.

\noindent {\bf Step 2.} There exists $C$ such that $\sup\limits_{t \geq \ep}  \|\rho_\ep(\cdot,t)\|_{C(\bar D)} \leq C$.

It suffices 
to show the following assertion: 
\begin{claim}\label{claim:useful}
There exists $C_1$ such that for any $t_1 \geq \ep$, 
$$
\sup_{x \in D}\rho_\ep (x,t_1) \leq C_1 \sup_{t \in [t_1-\ep, t_1]}\int_D\rho_\ep(x,t)~dx.
$$
\end{claim}
To prove the claim, we first extend $n_\ep(x,z,t)$ in the $z$ variable by reflection across $z = b$, and then periodically in $z$ to $D \times \mathbb{R} \times [0,\infty)$. 

Consider, for each $(z_1,t_1) \in I\times [\ep,\infty)$, the rescaled function
\begin{equation}\label{eq:N_ep}
N_\ep(x,y,\tau;z_1,t_1):= n_\ep(x,z_1 + \ep y,t_1 + \ep \tau),
\end{equation}
then $N_\ep$ satisfies, with $z= z_1 + \ep y$, 
\begin{equation}\label{eq:N}
\left\{
\begin{array}{ll}
\partial_t N_\ep - \alpha(z) 
\Delta_x N_\ep - \partial^2_y N_\ep  = N_\ep ( m - \rho_\ep) \leq mN_\ep &\text{ in }  D \times  \mathbb{R}\times [-1,\infty),\\
\partial_\nu N_\ep(x,y,\tau) = 0 &\text{ on }  \partial D \times  \mathbb{R}
\times[-1,\infty).
\end{array}
\right.
\end{equation}
Since $N_\ep$ is a subsolution of a linear, parabolic equation with $L^\infty$ bounded coefficients\footnote{Note that the term $-\rho_\ep$ is dropped.}, we can apply the local maximum principle \cite[Theorem 7.36]{lieberman} (see also \cite[Section 6.2]{chenbook}), to obtain a constant $C_1$ independent of $\ep>0$, $z_1 \in I$  and $t_1 \geq \ep$ such that 
\begin{equation}\label{eq:localmaxprin}
\|N_\ep(x,y,\tau)\|_{L^\infty(D\times(-4/5,4/5)\times (-4/5,0))} \leq C_1 \|N_\ep(x,y,\tau)\|_{L^1(D\times(-1,1) \times (-1,0))}.
\end{equation}
(Note that the spatial domain on both sides of the inequality can be taken to be the same, as a consequence of the Neumann boundary condition across $\partial D$.) 
Next, we write
$$
\rho_\ep(x,t_1) 
= \int_{I} n_\ep(x,z_1,t_1)~dz_1 = \int_{I}N_\ep(x,0,0;z_1,t_1)~dz_1.
$$
Taking supremum in $x \in D$, it follows that
\begin{align*}
\|\rho_\ep(\cdot,t_1)\|_{L^\infty(D)} &\leq  \int_I \|N_\ep(\cdot,0,0;z_1,t_1)\|_{L^\infty(D)}dz_1\\
&\leq  \int_I \|N_\ep(x,y,\tau;z_1,t_1) \|_{L^\infty(D \times(-1/2,1/2)\times (-1/2,0))}~dz_1\\
&\leq C\int_I \|N_\ep(x,y,\tau;z_1,t_1) \|_{L^1( D \times(-1,1) \times (-1,0))}~dz_1\\
&\leq C \int_{-1}^0 \int_D \int_{
(a -\ep, b + \ep)}n_\ep(x,z,t_1 + \ep \tau)~dzdxd\tau\\
&\leq C \int_{-1}^0 \int_D \int_{I}n_\ep(x,z,t_1 + \ep \tau)~dzdxd\tau\\
&\leq C \sup_{t \in [t_1-\ep, t_1]} \int_D \rho_\ep(x,t)~dx,
\end{align*}
where we used the periodicity of $n_\ep$ in the second to last inequalities. This proves Claim \ref{claim:useful}. 

Finally, we take supremum over $t_1 \geq \ep$ on both sides of the conclusion of Claim \ref{claim:useful}, we deduce
$$
\sup_{t \geq \ep} \|\rho_\ep (\cdot,t_1)\|_{L^\infty(D)} \leq C_1 \sup_{t_1 \geq \ep} \left[\sup_{t \in [t_1-\ep, t_1]}\int_D\rho_\ep(x,t)~dx\right] \leq \sup_{t \geq 0} \int_D \rho_\ep(\cdot, t)\,dx \leq C.
$$
This completes Step 2.

\noindent {\bf Step 3.} There exists $C>1$ such that $C^{-1} \leq \rho_\ep(x,t)\leq C$ for $x \in D$ and $t \in [0,\ep]$.

Based on {\bf(H2)} we construct the following lower solution of \eqref{eq:u}:
$$
\underline{U}(z,t):= V_0(z) - \frac{1}{2}\ep|\log\ep| - C_2(t + \ep) 
- \frac{C}{\ep} \left\{[z - (b-\sqrt
{\ep})]^3_+  + [a + \sqrt\ep - z]^3_+ \right\},
$$
where $C$ is  chosen large 
such that 
$\underline{U}_z(b, t)
\le 0\le \underline{U}_z(a, t)$,
and
then 
$C_2$ is chosen\footnote{Since the differential inequality \eqref{eq:u}, after dropping the $-\rho_\ep$ term, is independent of $-\rho_\ep$, the constant $C_2$ can be chosen independent of $\sup_{0 \leq t \leq \ep} \|\rho_\ep(\cdot,t)\|_{C(\bar{D})}$. } large enough so that 
$$
\underline{U}(z,0) \leq u_\ep(x,z,0) \quad \text{ for }x \in D,\, z \in I,
$$ 
and the appropriate differential inequality \eqref{eq:u} is satisfied 
in $ D \times I\times(0,\infty)$.
Hence for $0 \leq t \leq \ep$,
$$
u_\ep(x,z,t) \geq \underline{U}(z,t) \geq \frac{1}{2} V_0(z) - \frac{1}{2}\ep|\log \ep| + O(\ep),
$$
{where the last inequality follows from the fact that $V_0(z)\geq 0$ and is bounded below by a positive constant near  $z=a, b$, so that
$$
\frac{1}{2}V_0(z) - \frac{C}{\ep} \left\{[z - (b-\sqrt
{\ep})]^3_+  + [a + \sqrt\ep - z]^3_+ \right\} \geq 0.
$$}
Integrating in $z$, we find 
\begin{align*}
\rho_\ep(x,t) \leq \frac{C}{\sqrt\ep}\int_I \exp\left(-\frac{V_0(z) + O(\ep)}{\ep}  \right)~dz \leq C \quad \text{ for }x \in D, t \in [0,\ep].
\end{align*}
	This proves the upper bound of Step 3. The lower bound can then be similarly proved, by using the upper bound and considering the upper solution\footnote{Here the constant $C_3$ depends on the quantity $\sup_{0 \leq t \leq \ep} \|\rho_\ep(\cdot,t)\|_{C(\bar{D})}$, which has just been proved to be uniformly bounded.}
$$
\overline{U}(z,t):=V_0(z) - \frac{1}{2}\ep|\log\ep| + C_3(t + \ep)
$$
Notice that, here, the term in curly bracket appearing in the definition of $\underline{U}$ is not needed, as $V_0(z)$ has positive outer derivatives at $z \in \partial I$.
This completes the proof of Step 3.

\noindent {\bf Step 4.} There exists $C_0$ such that $\sup_{t \geq 0}  \|\rho_\ep(\cdot,t)\|_{C(\bar D)} \leq C_0$.

Step 4 is an immediate consequence of Steps 2 and 3.

\noindent {\bf Step 5.}  There exists $C$ such that $\inf_{t \geq 0} \int_D \rho_\ep(x,t)~dx \geq 1/C$.

We may assume that 
$$
\inf_{0 \leq t \leq \ep}\int_D\rho_\ep(x,t)~dx > e^{-C_0} \frac{\inf_D m}{2C_1}
$$
where $C_0$ is given in Step 4, and $C_1$ is given in Claim \ref{claim:useful}. Indeed, by Step 3, such an inequality holds if
we increase $C_0$ when necessary. Next, we assume to the contrary that there exist $t_1 > \ep$ such that 
\begin{equation}\label{eq:apriori.1}
\int_D \rho_\ep(x,t_1)~dx = e^{-C_0} \frac{\inf_D m}{2C_1} \quad \text{ and }\quad \frac{d}{dt}  \int_D \rho_\ep~dx\big|_{t=t_1} \leq 0.
\end{equation} 
By Step 4 and \eqref{eq:L1eq}, the function $A(t) := \int_D \rho_\ep(x,t)~dx$ satisfies the differential inequality $\epsilon\frac{d}{dt}A(t) \geq -C_0 A(t)$, so that for $t \in [t_1-\ep,t_1]$,
$$
A(t) \leq e^{C_0(t_1-t)/\ep} A(t_1) \leq e^{C_0}e^{-C_0} \frac{\inf_D m}{2C_1} = \frac{\inf_D m}{2C_1}.
$$
By Claim \ref{claim:useful}, we deduce that 
$$
\|\rho_\ep(\cdot,t_1)\|_{L^\infty(D)} \leq C_1 \sup_{t \in [t_1-\ep,t_1]} A(t) \leq C_1 \frac{\inf_D m}{2C_1} = \frac{\inf_D m}{2}.
$$
Hence, by \eqref{eq:L1eq},
$$
\ep \frac{d}{dt} \int_D \rho_\ep~dx\big|_{t=t_1} =\int_D (m - \rho_\ep)\rho_\ep~dx  \big|_{t=t_1} \geq \frac{\inf_D m}{2} A(t_1) >0,
$$
which is a contradiction to \eqref{eq:apriori.1}. This proves Step 5.

\noindent {\bf Step 6.} There exists $C$ such that $\rho_\ep(x,t) \geq 1/C$ for $x \in \overline D$ and $t \geq 0$.

For $0 \leq t \leq \ep$, the lower bound is proved in Step 3. For $t \geq \ep$, one first notice that $N_\ep(\tau,x,y;t_1,z_1)$ satisfies a parabolic equation with $L^\infty$ bounded coefficients, and thus satisfies a weak Harnack inequality \cite[Theorem 7.37]{lieberman} (see also \cite[Section 6.5]{chenbook}). One then prove that $\rho_\ep(x,t)$ also satisfies such a weak Harnack inequality, so that for each $t_1 \geq \ep$,
$$
\rho_\ep(x,t_1) \geq C \fint^{t_1-\ep/2}_{t_1-\ep} \int_D \rho_\ep(x,t)~dxdt,
$$
where the last term is bounded from below, as proved in Step 5. This proves Step 6.
Finally, the proposition follows from combining Steps 4 and 6.
\end{proof}

\section{Differentiability of the Principal Bundle}\label{sec:B}

The notion of a normalized principal Floquet bundle (see \cite{Polacik1993exponential}) is a generalization to evolution problems of the notion of principal eigenfunction of an elliptic, or periodic-parabolic operator. Its smooth dependence on parameters is recently established in \cite{Cantrell2021evolution}. 

\subsection{The normalized principal bundle}

Let $D \subset \mathbb{R}^N$ be a smooth bounded domain. Given $\alpha>0$ and $c \in C^{\beta,\beta/2}(\bar D \times \mathbb{R})$, we say that the positive function $\phi_1(x,t)$ is the corresponding principal Floquet bundle if it satisfies
$$
\begin{cases}
\partial_t \phi_1 - z \Delta \phi_1 - c(x,t) \phi_1 = H_1(t)\phi_1 &\text{ for }x\in D,~t\in\mathbb{R},\\
\partial_\nu \phi(x,t) = 0 &\text{ for }x\in \partial D,~t \in \mathbb{R},\\
\phi_1(x,t) >0 &\text{ for }x\in  \bar D,~t \in \mathbb{R}.\\
\end{cases}
$$
The existence and uniqueness is proved in \cite{Miercynzski1997globally}, which is based on the abstract result of \cite{Polacik1993exponential}.

To formulate the smooth dependence on parameters, we need the notion of a normalized principal Floquet bundle. 
\begin{definition}
Given $z >0$ and $c \in C^{\beta,\beta/2}(\bar D \times \mathbb{R})$, we say that the pair
$(\Phi_1(x,t), H_1(t))$
is the corresponding normalized principal Floquet bundle if it satisfies
\begin{equation}\label{eq:bundlen}
\begin{cases}
\partial_t \Phi_1 - z \Delta \Phi_1 - c(x,t) \Phi_1 = H_1(t)\Phi_1 &\text{ for }x\in D,~t\in\mathbb{R},\\
\partial_\nu \Phi_1(x,t) = 0 &\text{ for }x\in \partial D,~t \in \mathbb{R},\\
\int_\Omega \Phi_1(x,t)\,dx \equiv 1 &\text{ for }t\in\mathbb{R},\\
\Phi_1(x,t) >0 &\text{ for }x\in  \bar D,~t \in \mathbb{R}.\\
\end{cases}    
\end{equation}
\end{definition}

\begin{theorem}\label{thm:A1}
For each $z>0$ and $c \in C^{\beta,\beta/2}(\bar D\times \mathbb{R})$, there exists a unique pair  $$(\Phi_1(x,t),H_1(t)) \in C^{2+\beta,1+\beta/2}(\bar D\times \mathbb{R})\times C^{\beta/2}(\mathbb{R})$$ satisfying \eqref{eq:bundlen} in classical sense. 
\end{theorem}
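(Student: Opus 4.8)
The plan is to follow the standard construction of the principal Floquet bundle for time-dependent linear parabolic operators, adapting it to the Neumann boundary condition, and then to normalize. First I would fix $z>0$ and $c\in C^{\beta,\beta/2}(\bar D\times\mathbb{R})$, and consider on a finite time window $[-T,T]$ the linear parabolic operator $\mathcal{L}u = \partial_t u - z\Delta u - c(x,t)u$ with the homogeneous Neumann boundary condition. The key structural fact is that the evolution family $U(t,s)$ generated by $-z\Delta - c(x,t)$ on $C(\bar D)$ (or on $L^p(D)$) is positive and, by the parabolic Krein--Rutman/Harnack theory of Poláčik and Tere\v{s}čák (see \cite{Polacik1993exponential} and the references in the excerpt), admits a one-dimensional dominant ``Floquet'' subbundle: there is a family of positive functions $\psi(\cdot,t)\in C(\bar D)$, unique up to time-dependent scalar multiples, spanning the fastest-growing direction, together with a scalar logarithmic growth rate. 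I would invoke \cite{Miercynzski1997globally} for the existence and uniqueness of this positive entire (eternal) solution $\phi_1(x,t)$ with $\mathcal{L}\phi_1 = H_1(t)\phi_1$, which is exactly the statement recalled just before the theorem in the excerpt; the only thing left to produce is the normalization.

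Next I would normalize. Given the positive eternal solution $\phi_1$, set
\[
\Phi_1(x,t) := \frac{\phi_1(x,t)}{\int_D \phi_1(x',t)\,dx'}, \qquad m(t):=\int_D \phi_1(x',t)\,dx'.
\]
By construction $\int_D \Phi_1(x,t)\,dx\equiv 1$ and $\Phi_1>0$ on $\bar D\times\mathbb{R}$. Plugging $\phi_1 = m(t)\Phi_1$ into $\mathcal{L}\phi_1 = H_1(t)\phi_1$ and using that $\Phi_1$ is normalized gives, after integrating the PDE in $x$ and using the Neumann condition to kill the Laplacian term,
\[
m'(t) = \int_D \big(c(x,t) + H_1(t)\big)\phi_1(x,t)\,dx,
\]
so that $\Phi_1$ solves $\partial_t\Phi_1 - z\Delta\Phi_1 - c(x,t)\Phi_1 = \widetilde H_1(t)\Phi_1$ with
\[
\widetilde H_1(t) = H_1(t) - \frac{m'(t)}{m(t)} = -\int_D c(x,t)\Phi_1(x,t)\,dx.
\]
This last formula is the clean definition of the normalized eigenvalue and shows $\widetilde H_1$ is determined by $\Phi_1$ and $c$ alone; relabel $\widetilde H_1$ as $H_1$. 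Uniqueness of the normalized pair follows from uniqueness of $\phi_1$ up to a positive time-dependent scalar: any two such $\phi_1$'s differ by a factor $\lambda(t)>0$, which is killed by the normalization, so $\Phi_1$ is unique, and then $H_1$ is forced by the displayed integral formula.

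For the regularity claim $(\Phi_1,H_1)\in C^{2+\beta,1+\beta/2}(\bar D\times\mathbb{R})\times C^{\beta/2}(\mathbb{R})$, I would argue as follows. Since $c\in C^{\beta,\beta/2}$, interior-and-up-to-the-boundary parabolic Schauder estimates for the Neumann problem (see e.g. \cite{lieberman}) applied on overlapping time windows give $\phi_1\in C^{2+\beta,1+\beta/2}_{loc}$; the Harnack inequality provides two-sided bounds $0<c_1\le \phi_1\le c_2$ on each window, with constants depending only on $z$, $\|c\|_{C^{\beta,\beta/2}}$ and $D$ (this uniformity is what lets one pass to the entire solution and is also what the paper will need downstream). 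Hence $m(t)$ is bounded above and below, $m\in C^{1+\beta/2}$, and from $H_1(t) = -\int_D c(x,t)\Phi_1(x,t)\,dx$ together with $\Phi_1\in C^{\beta/2}$ in time uniformly in $x$ we get $H_1\in C^{\beta/2}(\mathbb{R})$; feeding this back, $\Phi_1$ inherits the full $C^{2+\beta,1+\beta/2}$ regularity.

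The main obstacle is not the normalization bookkeeping but the existence of the \emph{entire} (two-sided-in-time) positive solution $\phi_1$ with the exponential separation property: on a finite interval one only gets a ``final-time'' eigenfunction, and producing a genuine eternal solution requires the exponential dichotomy / pull-back compactness argument of \cite{Polacik1993exponential,Miercynzski1997globally}, which is exactly why I would cite those works rather than reprove them. Everything else — normalization, the formula for $H_1$, uniqueness, and the Schauder/Harnack regularity — is routine given that input.
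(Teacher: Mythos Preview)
Your proposal is correct and follows essentially the same route as the paper: invoke the existence and uniqueness of the (unnormalized) principal Floquet bundle from \cite{Miercynzski1997globally,Polacik1993exponential}, then normalize to produce the pair $(\Phi_1,H_1)$, with regularity coming from Schauder/Harnack. The paper's own proof is in fact much terser than yours---it simply cites \cite[Theorem~A.1]{Cantrell2021evolution} for the details of the normalization step---so you have spelled out the bookkeeping (the integral formula for $H_1$, uniqueness via the scalar ambiguity, and the regularity bootstrap) that the paper leaves to that reference.
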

\begin{proof}
The existence and uniqueness of $(\Phi_1,H_1(t))$ follows from the existence of the principal Floquet bundle $\phi(x,t)$, by noting that $H_1(t)$ arises from the normalization $\int_\Omega \Phi_1(x,t)\,dx \equiv 1$; 
See \cite[Theorem A.1]{Cantrell2021evolution} for detail.
\end{proof}
We need the smooth dependence of the normalized principal Floquet bundle, which is recently established proved in \cite{Cantrell2021evolution}.

\begin{proposition}\label{prop:A2}
The normalized principal Floquet bundle, as a mapping 
$$
\begin{array}{rl}
(z,c) & \mapsto (\Phi_1,H_1)\\
\mathbb{R}_+ \times C^{\beta,\beta/2}(\overline\Omega\times \mathbb{R}) & \to C^{2+\beta,1+\beta/2}(\overline\Omega\times \mathbb{R}) \times  C^{\beta/2}(\mathbb{R})
\end{array}
$$
is smooth.
In particular, there exists a constant $C=C(M)$ which is independent of
 $z \in \left[ \frac{1}{M}, M\right]$ and $\|c\|_{C^{\beta,\beta/2}(\bar D \times \mathbb{R})} \leq M$ such that
$$
\|\partial_x \Phi_1\|_\infty + \max_{i=0,1,2,3}\| \partial^i_z \Phi_1\|_\infty 
\leq C,
$$
where $\| \cdot \|$ is the $L^\infty$ norm over $(x,z,t) \in D \times \left[\frac{1}{M},M\right] \times \mathbb{R}$, and
$$
\frac{1}{C} \leq \Phi_1(x,t) \leq C \quad \text{ in }D \times \mathbb{R}.
$$
\end{proposition}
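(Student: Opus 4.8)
The goal is the smoothness of $(z,c)\mapsto(\Phi_1,H_1)$ for \eqref{eq:bundlen} together with the stated uniform estimates. I would (i) recast the selection of the principal bundle as a nonlinear equation in Banach spaces of \emph{bounded eternal} functions, (ii) invert its linearisation using the exponential separation theory of \cite{Polacik1993exponential,Miercynzski1997globally}, and (iii) read off the uniform bounds from Harnack and Schauder estimates and a bootstrap in $z$. For (i), put $\varphi=-\log\Phi_1$; it suffices to prove smoothness of $(z,c)\mapsto(\varphi,H_1)$. By Theorem~\ref{thm:A1} and its proof, $(\varphi,H_1)$ is the unique zero, in $\mathcal X:=\{w\in C^{2+\beta,1+\beta/2}(\bar D\times\mathbb R):\partial_\nu w=0\ \text{on}\ \partial D\}\times C^{\beta/2}(\mathbb R)$, of
\[
\mathcal F(z,c;w,K):=\Big(\partial_t w-z\Delta w+z|\nabla w|^2+c+K(t)\,,\ {\textstyle\int_D}e^{-w(x,t)}\,dx-1\Big),
\]
a map from an open subset of $\mathbb R_+\times C^{\beta,\beta/2}(\bar D\times\mathbb R)\times\mathcal X$ into $\mathcal Y:=C^{\beta,\beta/2}(\bar D\times\mathbb R)\times C^{1+\beta/2}(\mathbb R)$. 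Since $C^{2+\beta,1+\beta/2}(\bar D\times\mathbb R)$ is a Banach algebra and $w\mapsto|\nabla w|^2$, $w\mapsto e^{-w}$ are analytic Nemytskii maps into it, $\mathcal F$ is real-analytic in all arguments, so everything reduces to invertibility of $D_{(w,K)}\mathcal F$ at the solution.

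For (ii), the linearisation is
\[
D_{(w,K)}\mathcal F[(\psi,G)]=\Big(\mathcal L\psi+G(t)\,,\ -{\textstyle\int_D}\psi\,\Phi_1\,dx\Big),\qquad \mathcal L\psi:=\partial_t\psi-z\Delta\psi+2z\,\nabla\varphi\cdot\nabla\psi,
\]
with Neumann condition on $\partial D$. Conjugating by $\Phi_1$, i.e. setting $\Psi=\Phi_1\psi$, turns $\mathcal L\psi$ into $\Phi_1^{-1}\mathcal A\Psi$ with $\mathcal A:=\partial_t-z\Delta-(c+H_1)$; and $\Phi_1$ is precisely the principal Floquet bundle of $\mathcal A$, with Floquet exponent $0$. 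By exponential separation \cite{Polacik1993exponential,Miercynzski1997globally}, on bounded eternal functions $\mathcal A$ has a one-dimensional principal subbundle $\mathrm{span}(\Phi_1)$ and all remaining Floquet exponents are strictly negative. Hence, for $\mathcal A\Psi=F$ with $F$ bounded eternal, the component of $\Psi$ in the exponentially stable complement is uniquely and boundedly recovered from $F$ (stable parabolic dichotomy), while the principal component $\beta(t)\Phi_1$ need only satisfy $\beta'(t)=(\text{principal part of }F)$. Feeding in $F=\Phi_1(f-G)$ and imposing the linearised constraint $-\int_D\psi\,\Phi_1\,dx=g(t)$ \emph{for every} $t$ determines $\beta(t)$ — hence $\psi$ — outright and then forces $G(t)$; parabolic Schauder estimates and the dichotomy bounds show that $(f,g)\mapsto(\psi,G)$ is bounded from $\mathcal Y$ to $\mathcal X$. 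Thus $D_{(w,K)}\mathcal F$ is a Banach-space isomorphism, and the analytic implicit function theorem yields that $(z,c)\mapsto(\varphi,H_1)$, hence $(z,c)\mapsto(\Phi_1,H_1)=(e^{-\varphi},H_1)$, is smooth (indeed analytic).

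For (iii), fix $M>1$ and take $z\in[1/M,M]$, $\|c\|_{C^{\beta,\beta/2}}\le M$. The parabolic Harnack inequality applied to \eqref{eq:bundlen} gives $\sup_D\Phi_1(\cdot,t)\le C(M)\inf_D\Phi_1(\cdot,t)$ uniformly in $t$, and together with $\int_D\Phi_1(\cdot,t)\,dx=1$ this yields $1/C\le\Phi_1\le C$; parabolic Schauder estimates then bound $\|\Phi_1\|_{C^{2+\beta,1+\beta/2}}$, hence $\|\nabla\varphi\|_\infty$ and $\|\varphi\|_{C^{2+\beta,1+\beta/2}}$, uniformly. For $i=1,2,3$, $\partial_z^i\Phi_1$ solves the Neumann problem obtained by differentiating \eqref{eq:bundlen} $i$ times in $z$, whose principal part is governed by the ($\Phi$-side) linearisation, whose right-hand side is polynomial in $\alpha(z),\alpha'(z),\dots$ and the lower-order derivatives $\partial_z^j\Phi_1$ ($j<i$), and which is subject to the $i$-th $z$-derivative of the normalisation. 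Since the norm of $(D_{(w,K)}\mathcal F)^{-1}$ depends only on Schauder constants and the exponential-separation gap of $\mathcal A$, all of which are bounded uniformly over the compact set $\{1/M\le z\le M,\ \|c\|_{C^{\beta,\beta/2}}\le M\}$, an induction on $i$ gives $\|\partial_z^i\Phi_1\|_\infty\le C(M)$.

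The \textbf{main obstacle} is the \emph{uniform} solvability of the augmented linearisation on spaces of bounded eternal functions over $\bar D\times\mathbb R$ (used in both (ii) and (iii)): this is not an ordinary parabolic estimate but rests on the two-sided exponential dichotomy / exponential separation of the evolution family generated by \eqref{eq:bundlen}, and crucially on a \emph{quantitative} lower bound for the spectral gap that is uniform as $(z,c)$ ranges over the compact parameter set. Establishing and tracking this uniformity — rather than the implicit-function and bootstrap arguments, which are then routine — is the heart of the matter, and is where one must invoke the sharpened principal-bundle theory of \cite{Cantrell2021evolution}. A more hands-off alternative is to express $\Phi_1(\cdot,t)$ and $H_1(t)$ through the two-sided evolution operator $U(t,s)$ and the rank-one Floquet projection $P(t)$, and to combine analytic dependence of $U$ on $(z,c)$ with Kato's analytic perturbation theory for the isolated simple peripheral eigenvalue of the positive operators $P(t)$; the uniform bounds still follow from Harnack and Schauder as above.
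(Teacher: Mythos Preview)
Your proposal is correct and in fact goes considerably further than the paper's own proof. The paper does not argue the smoothness at all: it simply cites \cite[Proposition~A.4]{Cantrell2021evolution} for the smooth dependence, and then gives exactly the Harnack-plus-normalisation argument you sketch in~(iii) to obtain the two-sided bound $1/C\le\Phi_1\le C$. The derivative bounds $\|\partial_z^i\Phi_1\|_\infty\le C(M)$ are not argued separately in the paper either; they are taken as part of what ``smooth'' delivers on bounded parameter sets.

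What you have written in (i)--(ii) is essentially a reconstruction of what lies behind the cited result: an implicit-function-theorem argument in the eternal H\"older spaces, with the invertibility of the linearisation coming from the exponential separation between the principal bundle and its complement. Your identification of the main obstacle --- a uniform lower bound on the spectral gap over the parameter set --- is exactly right, and this is indeed the content of the sharpened principal-bundle theory. One small terminological slip: the set $\{1/M\le z\le M,\ \|c\|_{C^{\beta,\beta/2}}\le M\}$ is closed and bounded but not compact (the $c$-variable lives in an infinite-dimensional space); nonetheless your conclusion stands, since the Schauder constants and the exponential-separation gap depend only on the H\"older norm of the coefficients and the ellipticity bounds on $z$, hence are uniform over this norm-bounded set.
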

\begin{proof}
The smooth dependence is proved in \cite[Proposition A.4]{Cantrell2021evolution}. It remains to prove the positive upper and lower bounds on $\Phi_1$. For this purpose, we recall the uniform Harnack inequality \cite[Theorem 2.5]{Huska2006}, which says that there exists some positive constant  $C=C(M)$ such that
$$
\sup_{D} \Phi_1(\cdot,t) \leq C \inf_{D} \Phi(\cdot,t) \quad \text{ for }z \in  \left[\frac{1}{M},M\right], \, t \in \mathbb{R}.
$$
Thanks to the normalization $\int_\Omega \Phi_1\,dx \equiv 1$, we obtain
$$
\frac{1}{C|\Omega|} \leq \frac{1}{C}\sup_{D} \Phi_1(\cdot,t) \leq \Phi_1(x,t) \leq C\inf_{D}\Phi_1(\cdot,t) \leq \frac{C}{|\Omega|}.
$$
This completes the proof.
\end{proof}

\section{Uniqueness for the Constrained Hamilton Jacobi Equation}\label{sec:D}

We now establish the uniqueness of solutions to a constrained Hamilton-Jacobi equation in an open, bounded one-dimensional interval $I$ under some monotonicity assumption. We begin with a proposition that does not assume convexity of the Hamiltonian and initial data on the trait variable $x$.


\begin{proposition}\label{propunique}
For $i=1, \, 2$, let $(V_i,\bar{z}_i) \in W^{1,\infty}(I'\times [0,T]) \times BV([0,T])$ be a solution to
\begin{equation}\label{eq:limith}
\left\{
\begin{array}{ll}
\partial_t V + |\partial_z V|^2 + R(z, \bar{z}(t),t)=0 &\text{ for }z \in I', \,t \in [0,T],\\
\partial_z V(z,t) = 0 &\text{ for }z \in \partial I', \,t \in [0,T]\\
\end{array} \right.
\end{equation}
in the viscosity sense, and 
which verifies the initial data and the constraint \begin{equation}\label{eq:limith2}
\left\{
\begin{array}{ll}
V(z,0) = V_0(z) &\text{ for }z \in I',\\
\inf_{z \in I'} V(z,t)  = 0 &\text{ for }t \in [0,T]
\end{array} \right.
\end{equation}
in the classical sense. 
Suppose that $R$ is $\mathcal{C}^2$ in all variables, and 
\begin{equation}\label{eq:barzz}
\bar{z}_i(t) \in {\rm Int}\,I'  \quad \text{ for }\quad i=1,2, \, t \in [0,T], 
\end{equation}
\begin{equation}\label{eq:u2ii}
R(z_1,z_2,t)=0 \quad \text{ if and only if } \quad (z_1,z_2,t) \in \Gamma 
\end{equation}
and 
\begin{equation}\label{eq:u2iii}
\partial_{z_1}R(z,z,t) >0 \,\, \text{ in }\Gamma \quad \text{ or } \quad \partial_{z_1}R(z,z,t) <0 \,\,\text{ in }\Gamma,
\end{equation}
where $\Gamma:= \{(z_1,z_2,t)\in I' \times I' \times [0,T]\,:\, z_1=z_2\}$. Then 
$$
(V_1(z,t),\bar{z}_1(t)) = (V_2(z,t),\bar{z}_2(t))\quad \text{ for }z\in I'\text{ and }t \in [0,T].
$$
\end{proposition}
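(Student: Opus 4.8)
\emph{Strategy.} I will decouple the system: the whole claim reduces to showing that the two trajectories coincide, $\bar z_1\equiv\bar z_2$ on $[0,T]$, after which $V_1\equiv V_2$ comes for free. Indeed, once $\bar z_1=\bar z_2=:\bar z$, both $V_1$ and $V_2$ are Lipschitz viscosity solutions of one and the same Hamilton--Jacobi initial--boundary value problem --- equation \eqref{eq:limith} with the fixed source $(z,t)\mapsto R(z,\bar z(t),t)$ (bounded, $\mathcal C^2$ in $z$, merely $BV$ in $t$), the Neumann condition on $\partial I'$, and the datum $V_0$ --- so the standard comparison principle for Hamilton--Jacobi equations with Neumann boundary conditions forces $V_1\equiv V_2$; no convexity enters this step. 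Thus everything rests on uniqueness of the trajectory.

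\emph{Step 1: structural consequences of the hypotheses.} Assumptions \eqref{eq:u2ii}--\eqref{eq:u2iii} together pin down the sign of $R$ everywhere: since $R$ is continuous, vanishes exactly on $\Gamma$, and (say) $\partial_{z_1}R>0$ on $\Gamma$, for fixed $(z_2,t)$ the map $z_1\mapsto R(z_1,z_2,t)$ cannot change sign except at $z_1=z_2$, so $\operatorname{sgn}R(z_1,z_2,t)=\operatorname{sgn}(z_1-z_2)$; differentiating $R(z,z,t)\equiv0$ along $\Gamma$ also gives $\partial_{z_2}R(z,z,t)=-\partial_{z_1}R(z,z,t)<0$. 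Next I identify the trajectory: using the constraint $\inf_zV_i(\cdot,t)=0$ and \eqref{eq:barzz}, I claim that for a.e.\ $t$ the function $V_i(\cdot,t)$ attains its minimum only at $\bar z_i(t)\in\operatorname{Int}I'$. The mechanism is that along a curve of interior minimizers $z_i^\ast(t)$ one has $\partial_zV_i=0$ while $V_i(z_i^\ast(t),t)\equiv0$; hence $\partial_tV_i=0$ there, and comparing with the equation yields $R(z_i^\ast(t),\bar z_i(t),t)=0$, which by \eqref{eq:u2ii} forces $z_i^\ast(t)=\bar z_i(t)$. The fixed sign of $R$ likewise makes each $\bar z_i$ monotone in $t$.

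\emph{Step 2: uniqueness of the trajectory via the representation formula.} Suppose $\bar z_1\not\equiv\bar z_2$ and let $t^\ast$ be the last time up to which they agree; on $[0,t^\ast]$ the sources coincide so $V_1\equiv V_2$ there, and both trajectories are continuous at $t^\ast$ with common value $\zeta^\ast\in\operatorname{Int}I'$, the unique minimizer of the common profile. For $t>t^\ast$ I use the Lax--Oleinik formula (the Neumann condition being handled by reflection)
\[
V_i(z,t)=\inf\Big\{V_0(\gamma(0))+\int_0^t\Big(\tfrac14|\dot\gamma(s)|^2-R(\gamma(s),\bar z_i(s),s)\Big)ds:\ \gamma:[0,t]\to\overline{I'},\ \gamma(t)=z\Big\}.
\]
Pick an optimal curve $\gamma_2$ realizing $\min_zV_2(\cdot,t)=0$, ending at a minimizer $z_2^\ast(t)$, and evaluate the functional defining $V_1$ along $\gamma_2$; since the $\bar z_1,\bar z_2$ contributions cancel on $[0,t^\ast]$,
\[
0\ \le\ V_1\big(z_2^\ast(t),t\big)\ \le\ \int_{t^\ast}^{t}\big[R(\gamma_2(s),\bar z_2(s),s)-R(\gamma_2(s),\bar z_1(s),s)\big]ds.
\]
As $t\downarrow t^\ast$ the curve $\gamma_2$ and both trajectories stay interior and converge to $\zeta^\ast$, so near $s=t^\ast$ the bracket equals $\partial_{z_2}R(\zeta^\ast,\zeta^\ast,t^\ast)\,(\bar z_2(s)-\bar z_1(s))+o(|\bar z_2(s)-\bar z_1(s)|)$; since $\partial_{z_2}R(\zeta^\ast,\zeta^\ast,t^\ast)<0$ and $\bar z_2-\bar z_1$ keeps a definite sign on a right neighborhood of $t^\ast$ (using the monotonicity from Step 1 together with the symmetric inequality obtained by exchanging $1$ and $2$, which fixes that sign), the right-hand side is strictly negative for $t$ slightly above $t^\ast$: contradiction. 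Hence $\bar z_1\equiv\bar z_2$, and the reduction in the first paragraph concludes the proof.

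\emph{Main obstacle.} The genuinely delicate part is the low regularity caused by dropping convexity: $\bar z_i$ may jump (it is only $BV$), $V_i(\cdot,t)$ need not be quadratically nondegenerate at its minimum, and its minimizing set can be multivalued at exceptional times. Making rigorous (i) the identification $\bar z_i(t)=\operatorname{argmin}V_i(\cdot,t)$ for a.e.\ $t$ with upper semicontinuity of this set, (ii) the continuity of the trajectories at the separation time $t^\ast$ and the fixed sign of $\bar z_1-\bar z_2$ just after it, and (iii) the existence of calibrated (optimal) curves for the free-endpoint problem remaining interior near their terminal time, is where the work lies; semiconcavity of $V_i(\cdot,t)$ for $t>0$ and the monotonicity of $\bar z_i$ are the main levers.
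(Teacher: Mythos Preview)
Your strategy is the same as the paper's: reduce to $\bar z_1\equiv\bar z_2$, extract from \eqref{eq:u2ii}--\eqref{eq:u2iii} the sign law $\operatorname{sgn}R(z_1,z_2,t)=\operatorname{sgn}(z_1-z_2)$, show each $\bar z_i$ is monotone and identify $\bar z_i(t)$ with the zero set of $V_i(\cdot,t)$, and then derive a contradiction through the Lax--Oleinik representation by testing the functional for $V_1$ along an optimal curve for $V_2$. The paper packages your Step~1 as two lemmas (showing $\bar z_i(t-)\le\bar z_i(t+)$, $\mathcal N_i(t)\subset[\bar z_i(t-),\bar z_i(t+)]$, and $\bar z_i(t+)=\sup\mathcal N_i(t)$), then reduces without loss of generality to the situation where the trajectories separate immediately (your $t^\ast=0$), and---rather than finishing the contradiction by hand---verifies a list of hypotheses (U1)--(U3) so as to invoke \cite[Section~3, Remark~3]{Calvez2018}.

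The one place where your sketch is genuinely incomplete is the assertion that ``the symmetric inequality \dots\ fixes that sign.'' Monotonicity of $\bar z_1$ and $\bar z_2$ separately does not force $\bar z_1-\bar z_2$ to keep a sign on a right neighborhood of $t^\ast$, and the pair of inequalities you obtain only yield, with $A=\int_{t^\ast}^t(\bar z_2-\bar z_1)_+$ and $B=\int_{t^\ast}^t(\bar z_2-\bar z_1)_-$, the relations $cA\le CB$ and $cB\le CA$ for constants $0<c<C$ with $C/c\to1$; these are mutually consistent and do not yield a contradiction. Closing this loop is precisely the technical content the paper outsources to \cite{Calvez2018}: the hypotheses (U3)(i)--(iii) that the paper checks (convergence $z^t_i\to\bar z_0$, the sign of $\partial_{z_2}R$ at the limit point, and uniform Lipschitz bounds on the calibrated curves $\dot\gamma^t_i$ as $t\downarrow t^\ast$) are exactly what that reference uses to turn your soft estimate into a strict one. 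You correctly flag this as the main obstacle, but be aware that your Step~2 as written does not yet overcome it.
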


We postpone the proof of this proposition and conclude the proofs of Propositions~\ref{prop:traject} and~\ref{prop:traject2}.

\subsection{Proofs of Propositions \ref{prop:traject} and~\ref{prop:traject2}}

\begin{proof}[Proof of Proposition \ref{prop:traject}]
Since assertions (ii) and (iii), and the existence part of assertion (i) are proved in \cite{Lorz2011,Mirrahimi2016}, we prove the uniqueness part of assertion (i) in the following.

Let $(V_i(z,t),\bar{z}_i(t))$ be two solutions such that the common initial data $V_0(z)$ is convex, smooth, and attains a unique minimum at some $\bar{z}_0 \in {\rm Int}\, I$.

\noindent {\bf Step 1.} \quad 
Due to the differential equation \eqref{eq:canonical}, and the fact that 
%
$\lambda(z,z)\equiv 0$, exactly one of the following cases holds:
$$
(i)\quad \bar{z}_1(t)\equiv \bar{z}_2(t) \equiv \bar{z}_0,\quad (ii)\quad \left( \frac{d}{dt}\bar{z}_1(t) \right)\left(  \frac{d}{dt}\bar{z}_2(t) \right) >0\quad \text{ for all }t \in [0,T].
$$
In case (i), the conclusion follows from standard uniqueness of viscosity solution to Neumann problem. We henceforth consider case (ii). In fact, by \eqref{eq:canonical} we can assume without loss of generality that 
\begin{equation}\label{eq:pp1}
 \frac{d}{dt} \bar{z}_i(t) >0 \,\,\,\text{ and }\,\,\, \partial_{z_1}\lambda(\bar{z}_i(t),\bar{z}_i(t)) <0 \quad\text{ for }\quad t \in [0,T]\,\text{ and }\, i=1,2.
\end{equation}

\noindent {\bf Step 2.} \quad 
We choose $I'$ to be slightly larger than $[\bar{z}_0, \max_{i=1,2} \bar{z}_i(T)]=\cup_{i=1}^2 \bar{z}_i([0,T])$, then  \eqref{eq:barzz} and \eqref{eq:u2ii} 
hold.

\noindent {\bf Step 3.} \quad 
$(V_1(z,t),\bar{z}_1(t))$ and $(V_2(z,t),\bar{z}_2(t))$ are viscosity solutions to the same constrained Hamilton-Jacobi equation (with Neumann boundary conditions) on the restricted domain $I' \times [0,T]$:
\begin{equation}\label{eq:limith'}
\left\{
\begin{array}{ll}
\partial_t V + |\partial_z V|^2 - \lambda(z, \bar{z}(t))=0 &\text{ for }z \in I', \,t \in [0,T],\\
\partial_z V(z,t) = 0 &\text{ for }z \in \partial I', \,t \in [0,T],\\
V(z,0) = V_0(z) &\text{ for }z \in I',\\
\inf_{z \in I'} V(z,t)  = 0 &\text{ for }t \in [0,T].
\end{array} \right.
\end{equation}
(The initial data and constraint are satisfied in the classical sense.)
This step is valid since $z\mapsto V_i(z,t)$ is convex, and the unique minimum point $\bar{z}_i(t) \in {\rm Int}\,I'$ for all $t$.

\noindent {\bf Step 4.} \quad 
We can now apply Propositon \ref{propunique} to conclude that $\bar{z}_1(t)=\bar{z}_2(t)$ a.e. in $ [0,T]$. Then we can use the variational characterization to deduce that $V_1\equiv V_2$ in the original domain $I\times [0,T]$ (not just in the smaller domain  $I'\times [0,T]$).
\end{proof}

\begin{proof}[Proof of Proposition \ref{prop:traject2}]
Fix $\epsilon>0$ and let $R(z_1,z_2,t):= -H_\ep(z_1,t)+H_\ep(z_2,t)$. Once again, the existence of a viscosity solution $(V_\ep(z,t),\bar{z}_\ep(t))$ holds. Since $V_0(z)$ and $-R(z,\bar{z},t)$ are convex in $z$, it follows that $V_\ep(z,t)$ is strictly convex in $z$ for each $t$, and satisfies the differential equation
\begin{equation}\label{eq:DE}
\frac{d}{dt} \bar{z}_\ep(t) =\frac{1}{\partial_{zz}V_\ep( \bar{z}_\ep(t),t)} \partial_{z_1}R( \bar{z}_\ep(t), \bar{z}_\ep(t),t).
\end{equation}
To show uniqueness, it suffices to repeat the proof of Proposition \ref{prop:traject}. We omit the details.
\end{proof}

\subsection{Proof of Proposition \ref{propunique}}
Suppose two sets of solutions $(V_i,\bar{z}_i)$, $i=1,2,$ are given. 
First, extend the problem by reflection to the domain $[2\inf I' - \sup I', \sup I']\times [0,T]$ and then extend it periodically so that it is defined in $\mathbb{R} \times [0,T]$. We use the variational characterization:
\begin{equation}\label{eq:variationp}
V_i(z,t) = \inf_{\gamma(t) = z} \left\{ \int_0^t \left[\frac{|\dot\gamma(s)|^2}{4} - R(\gamma(s), \bar{z}_i(s),s)\right]\,ds + V_0(\gamma(0)) \right\}
\end{equation}
with the understanding that $V_0(z)$ is also being extended evenly and periodically so that it is defined for all $z \in \mathbb{R}$.

\begin{lemma}\label{lem:mono1p}
Assume \eqref{eq:barzz}, \eqref{eq:u2ii} and the first alternative of \eqref{eq:u2iii}.
Let $(V(z,t),\bar{z}(t)) \in W^{1,\infty}(\mathbb{R} \times [0,T]) \times \textup{BV}([0,T])$ be a solution of \eqref{eq:limith} and \eqref{eq:limith2}. Then, at possible discontinuity points we have
\begin{equation}\label{eq:mmp}
\bar{z}(t-)\leq \bar{z}(t+)\quad \text{ and }\quad \mathcal{N}(t) \subset [\bar{z}(t-), \bar{z}(t+)]
\end{equation}
where $\mathcal{N}(t) = \{z\in \bar I': V(z,t) = 0\}$. 
\end{lemma}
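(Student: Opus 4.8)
\textbf{Proof proposal for Lemma \ref{lem:mono1p}.}

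The plan is to exploit the variational characterization \eqref{eq:variationp} together with the monotonicity assumption $\partial_{z_1}R(z,z,t) > 0$ on $\Gamma$, which (since $R$ vanishes on $\Gamma$ and is $\mathcal{C}^2$) forces $R(z_1,z_2,t) > 0$ for $z_1 > z_2$ and $R(z_1,z_2,t) < 0$ for $z_1 < z_2$ in a neighborhood of $\Gamma$; more precisely, for $z_1,z_2$ both in $I'$ we get $\operatorname{sgn} R(z_1,z_2,t) = \operatorname{sgn}(z_1-z_2)$. The rough idea is that placing mass (the zero set of $V$) to the left of $\bar z(t)$ is ``cheaper'' in the Lagrangian because $-R$ is then negative, so any minimizing trajectory is pushed in a controlled direction; combined with the constraint $\inf_z V(\cdot,t)=0$ this pins the zero set between the left and right limits of $\bar z$.

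First I would establish the one-sided limits and the inclusion $\mathcal{N}(t)\subset[\bar z(t-),\bar z(t+)]$ at a fixed time $t$. Fix $z^\star\in\mathcal N(t)$, so $V(z^\star,t)=0$, and let $\gamma$ be a near-optimal trajectory in \eqref{eq:variationp} with $\gamma(t)=z^\star$. I would compare the cost of $\gamma$ against a competitor obtained by reflecting or truncating $\gamma$ so that it ends at $\bar z(s\pm)$-side of the interval: using $V\ge 0$ everywhere, $V(\gamma(s),s)\ge 0$, while along the optimal curve $V(\gamma(s),s)$ equals the running cost from $0$ to $s$; the sign information on $R$ then shows that if $z^\star < \bar z(t-)$ (say), one could strictly decrease the cost, contradicting $V(z^\star,t)=0=\inf V(\cdot,t)$. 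The technical heart is a dynamic-programming / doubling-of-time argument: write $V(z^\star,t)=\inf\{\int_{t-\delta}^t[\dot\gamma^2/4 - R(\gamma,\bar z,s)]ds + V(\gamma(t-\delta),t-\delta)\}$ and let $\delta\to 0^+$; because $\bar z\in\mathrm{BV}$, $\bar z(s)\to\bar z(t-)$ for $s\uparrow t$, and the sign of $R(\gamma(s),\bar z(s),s)$ stabilizes, which forces $z^\star \ge \bar z(t-)$. The symmetric argument with $\delta\to 0^+$ on $[t,t+\delta]$ (using that $V(\cdot,t)$ serves as initial data and $\bar z(s)\to\bar z(t+)$ for $s\downarrow t$) gives $z^\star\le\bar z(t+)$.

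Next, the ordering $\bar z(t-)\le\bar z(t+)$ follows once the inclusion is in hand: by left- and right-continuity of the running infimum and the fact that $\mathcal N(s)$ is nonempty for every $s$ (the constraint), one takes $z_-\in\mathcal N$ accumulating from the left and $z_+$ from the right; passing to the limit in $V$, which is Lipschitz in $(z,t)$, both $\bar z(t-)$ and $\bar z(t+)$ lie in $\mathcal N(t)$, so $\mathcal N(t)$ is nonempty and, combined with $\mathcal N(t)\subset[\bar z(t-),\bar z(t+)]$, this is only consistent if $\bar z(t-)\le\bar z(t+)$. I expect the main obstacle to be the rigorous handling of the BV-in-time behavior of $\bar z$ inside the variational formula: one must argue that near-optimal trajectories cannot oscillate wildly as $\delta\to 0$, so that the sign of $-R(\gamma(s),\bar z(s),s)$ is eventually constant and the comparison inequality is strict; this requires the Lipschitz bound on $V$ (hence on $|\dot\gamma|$ along optimal curves) from $W^{1,\infty}$, the continuity of $R$, and care with the one-sided limits of the BV function $\bar z$. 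The reflection/periodization step is used only to make \eqref{eq:variationp} globally valid and to avoid boundary complications, which is legitimate since $\bar z_i(t)\in\mathrm{Int}\,I'$ by \eqref{eq:barzz}.
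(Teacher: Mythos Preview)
Your proposal is correct and follows essentially the same approach as the paper: fix $z^\star\in\mathcal N(t)$, use the dynamic programming principle on $[t-h,t]$ together with $V\ge 0$ to obtain $\frac1h\int_{t-h}^t R(\gamma(s),\bar z(s),s)\,ds\ge 0$, pass to the limit $h\to 0$ to get $R(z^\star,\bar z(t-),t)\ge 0$ and hence $z^\star\ge\bar z(t-)$, while for the forward inequality the paper simply takes the constant extension $\gamma_1(s)\equiv z^\star$ for $s>t$ as a competitor (so the kinetic term vanishes), which cleanly sidesteps your concern about oscillation of near-optimal trajectories. The ordering $\bar z(t-)\le\bar z(t+)$ then follows immediately from the nonemptiness of $\mathcal N(t)$ (guaranteed by the constraint) together with the inclusion $\mathcal N(t)\subset[\bar z(t-),\bar z(t+)]$, so there is no need to argue that $\bar z(t\pm)\in\mathcal N(t)$.
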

\begin{proof}
By \eqref{eq:u2ii} and the first alternative of \eqref{eq:u2iii}, we have
\begin{equation}\label{eq:u2iv}
{\rm sgn}(R(z_1,z_2,t)) = {\rm sgn}(z_1-z_2).
\end{equation}
Fix $(z,t)$ such that $V(z,t) = 0$, it suffices to show that $\bar{z}(t-) \leq z \leq \bar{z}(t+)$. To show the first inequality, choose a minizing curve $\gamma(t)$ for \eqref{eq:variationp} such that $\gamma(t) = z$ and  
\begin{equation}\label{eq:ztp}
0=V(z,t) = \int_0^t \frac{|\dot\gamma(s)|^2}{4} - R(\gamma(s), \bar{z}(s),s)\,ds + V_0(\gamma(0)).
\end{equation}
In fact, for any $h \in (0,t)$, the dynamic programming principle says that
$$
0 \leq V(\gamma(t-h),t-h) = \int_0^{t-h} \left[\frac{|\dot\gamma(s)|^2}{4} -R(\gamma(s),
\bar{z}(s),s)\right]\,ds + V_0(\gamma(0)) .
$$
Subtracting, we have
$$
0 \leq -\int_{t-h}^t\left[ \frac{|\dot\gamma(s)|^2}{4} -R(\gamma(s), \bar{z}(s),s)\right]\,ds \leq  \int_{t-h}^t R(\gamma(s),\bar{z}(s),s)\,ds \quad \text{ for all }s \in (0,t).
$$
Dividing by $h$ and letting $h \searrow 0$, we deduce that 
\begin{equation}\label{eq:firstineqp}
R(z,\bar{z}(t-),t) = R(\gamma(t),\bar{z}(t-),t) \geq 0 .  
\end{equation}
By \eqref{eq:u2iv}, we have $\bar{z}(t-) \leq z.$

Next, we fix as above $z,t,\gamma(\cdot)$, and define $\gamma_1:[0,t+1] \to \mathbb{R}$ by
$$
\gamma_1(s) = \left\{
\begin{array}{ll}
\gamma(s) & \text{ for } 0 \leq s \leq t,\\
z &\text{ for }s > t.
\end{array}
\right.
$$
Then by \eqref{eq:variationp}, we have for $0< h < 1$,
$$
0 \leq V(t+h, \gamma_1(t+h)) \leq \int_0^{t+h} \frac{|\dot\gamma_1(s)|^2}{4} -R(\gamma_1(s), \bar{z}(s),s)\,ds + V_0(\gamma(0)).
$$
Subtracting \eqref{eq:ztp} from the above, we have
\begin{equation}\label{eq:secondineqp}
0 \leq -\int_t^{t+h} R(z, \bar{z}(s),s)\,ds .
\end{equation}
Divide by $h$, and let $h \to 0$, then $R(z, \bar{z}(t+),s) \leq 0$. By \eqref{eq:u2iv}, we have $\bar{z}(t+) \geq z$.
%
\end{proof}
\begin{remark}\label{rmk:B2p}
From the above proof, for each $t_0>0$ and $\bar{z}_0 \in \mathcal{N}(t_0)$, it follows from 
\eqref{eq:secondineqp} that
$$
\int_{t_0}^{t_0 + h}R(\bar{z}_0, \bar{z}(s),s)\,ds \leq 0 \quad \text{ for all sufficiently small }h>0.$$
\end{remark}
\begin{lemma}\label{lem:mono2p}
Assume \eqref{eq:barzz}, \eqref{eq:u2ii} and the first alternative of \eqref{eq:u2iii}.
Let $(V(z,t),\bar{z}(t)) \in W^{1,\infty}(\mathbb{R} \times [0,T]) \times \textup{BV}([0,T])$ be a solution of \eqref{eq:limith} and \eqref{eq:limith2}, then $\bar{z}$ is non-decreasing. Furthermore, we have 
\begin{equation}\label{eq:pp3}
\lim_{t \to 0+} \left[ \sup\mathcal{N}(t)\right] = \sup \mathcal{N}(0),
\end{equation}
and, still with $\mathcal{N}(t) = \{z\in \bar I': V(z,t) = 0\}$, we have
\begin{equation}\label{eq:pp2}
\bar{z}(t+) = \sup \mathcal{N}(t).
\end{equation}
\end{lemma}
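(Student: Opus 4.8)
The plan is to establish, in order: the exact identities $\bar z(t-)=\inf\mathcal N(t)$ and $\bar z(t+)=\sup\mathcal N(t)$ (the second being \eqref{eq:pp2}), then a global‑in‑time form of Remark \ref{rmk:B2p}, then the monotonicity of $\bar z$, and finally the initial‑trace statement \eqref{eq:pp3}.

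For the identities: by Lemma \ref{lem:mono1p}, $\mathcal N(t)\subseteq[\bar z(t-),\bar z(t+)]$ for every $t$; since $\bar z\in\mathrm{BV}([0,T])$ is continuous off a countable set, at a continuity point $s$ this forces $\emptyset\neq\mathcal N(s)\subseteq\{\bar z(s)\}$, hence $\mathcal N(s)=\{\bar z(s)\}$ and $V(\bar z(s),s)=0$. Fixing $t$ and picking continuity points $s_n\downarrow t$, the (joint) Lipschitz continuity of $V$ yields $V(\bar z(t+),t)=\lim_n V(\bar z(s_n),s_n)=0$, so $\bar z(t+)\in\mathcal N(t)$; with $\mathcal N(t)\subseteq[\bar z(t-),\bar z(t+)]$ this gives $\bar z(t+)=\sup\mathcal N(t)$, and symmetrically (using $s_n\uparrow t$) $\bar z(t-)=\inf\mathcal N(t)$. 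The same compactness argument shows that $g(t):=\sup\mathcal N(t)=\bar z(t+)$ is upper semicontinuous: if $t_n\to t$ and $g(t_n)\to\ell$, then $\ell\in\mathcal N(t)$ so $\ell\le g(t)$.

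Following the proof of Lemma \ref{lem:mono1p} with the competitor curve $\gamma_1$ taken to be an optimal trajectory for $V(z^*,t_0)$ on $[0,t_0]$ and constant $\equiv z^*$ on $[t_0,t]$, the variational formula \eqref{eq:variationp} together with the constraint $V\ge0$ gives, for every $t_0\in[0,T]$, every $z^*\in\mathcal N(t_0)$, and every $t\in[t_0,T]$,
\[
\int_{t_0}^{t}R\bigl(z^*,\bar z(\sigma),\sigma\bigr)\,d\sigma\;\le\;0
\]
(this is Remark \ref{rmk:B2p} without the restriction to small $h$). I now combine this with \eqref{eq:u2iv}, $\mathrm{sgn}\,R(z_1,z_2,t)=\mathrm{sgn}(z_1-z_2)$, and the fact that, by \eqref{eq:pp2}, $\bar z(\sigma)=g(\sigma)$ for a.e.\ $\sigma$. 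Fix $0\le t_1<t_2\le T$, put $z^*:=g(t_1)$ and $A:=\{t\in[t_1,t_2]:g(t)\ge z^*\}$, and suppose for contradiction that $g(t_2)<z^*$. Then $\bar t:=\sup A$ satisfies $\bar t<t_2$ (as $t_2\notin A$), upper semicontinuity of $g$ gives $\bar t\in A$, so $w:=g(\bar t)\ge z^*$ and $w\in\mathcal N(\bar t)$. For a.e.\ $\sigma\in(\bar t,t_2)$ we have $\bar z(\sigma)=g(\sigma)<z^*\le w$ (since such $\sigma\notin A$), hence $R(w,\bar z(\sigma),\sigma)>0$ and $\int_{\bar t}^{t_2}R(w,\bar z(\sigma),\sigma)\,d\sigma>0$, contradicting the displayed inequality at $(t_0,z^*)=(\bar t,w)$. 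Thus $g(t_2)\ge g(t_1)$; since $g=\bar z$ a.e., $\bar z$ is nondecreasing.

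Finally, $g$ being nondecreasing, $\lim_{t\to0+}\sup\mathcal N(t)=\lim_{t\to0+}g(t)=\bar z(0+)$ exists; applying the monotonicity with $t_1=0$ gives $\bar z(0+)\ge\sup\mathcal N(0)$, while the continuity‑point approximation above (at $t_0=0$) gives $\bar z(0+)\in\mathcal N(0)$, hence $\bar z(0+)\le\sup\mathcal N(0)$, so \eqref{eq:pp3} follows. I expect the monotonicity step to be the main obstacle: it is the only place where the sign hypothesis \eqref{eq:u2iii} is genuinely used, and it hinges on pairing the global‑in‑time inequality with the upper semicontinuity of $t\mapsto\sup\mathcal N(t)$ and the first‑exit‑time device $\bar t=\sup A$. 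The identification $\bar z(t\pm)=\inf/\sup\mathcal N(t)$ via dense continuity points is the other point requiring care; the remaining ingredients (Lipschitz bounds on $V$, closedness and nonemptiness of $\mathcal N(t)$, one‑sided‑limit properties of $\mathrm{BV}$ functions) are routine.
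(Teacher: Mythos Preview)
Your argument is correct, and its architecture genuinely differs from the paper's. The paper establishes the three claims in the order monotonicity $\Rightarrow$ \eqref{eq:pp3} $\Rightarrow$ \eqref{eq:pp2}; in particular, its proof of \eqref{eq:pp2} relies on monotonicity, via the chain $\bar z(t+)\le \bar z(t'-)\le \inf\mathcal N(t')\le \sup\mathcal N(t')$ for $t'>t$ followed by the set upper semicontinuity \eqref{eq:pp4}. You instead obtain the identity $\bar z(t+)=\sup\mathcal N(t)$ \emph{first}, directly from the density of continuity points of a BV function together with the Lipschitz continuity of $V$, and then run the last-exit-time contradiction on $g=\sup\mathcal N$ rather than on $\bar z$ itself. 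The global-in-time upgrade of Remark~\ref{rmk:B2p} that you isolate is indeed available (the paper's derivation of \eqref{eq:secondineqp} does not use smallness of $h$) and makes the contradiction step clean: $w:=g(\bar t)\in\mathcal N(\bar t)$ with $\bar z(\sigma)=g(\sigma)<w$ a.e.\ on $(\bar t,t_2)$ forces $\int_{\bar t}^{t_2}R(w,\bar z(\sigma),\sigma)\,d\sigma>0$. Your route is somewhat more economical---\eqref{eq:pp2} comes for free and the monotonicity argument avoids arguing that $\mathcal N(t_0)$ is a singleton---while the paper's order has the advantage that it never needs to manipulate one-sided limits of $\bar z$ along dense continuity points. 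One small redundancy: your final paragraph reproves at $t=0$ an inequality already contained in the identity step, since $\bar z(0+)=\sup\mathcal N(0)$ is exactly \eqref{eq:pp2} at $t=0$, and then \eqref{eq:pp3} follows immediately from the monotonicity of $g$.
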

\begin{proof}
Again, 
without loss of generality, we choose the right-continuous representative of $\bar{z}$.  Now, assume to the contrary that $\bar{z}$ is not non-decreasing. i.e. $\bar{z}(t_1) > \bar{z}(t_2)$ for some $t_1 < t_2$. Since $\bar{z}$ is right-continuous, we have $\bar{z}(t_1) > \bar{z}(t_2+)$, i.e. there exists $t_3 > t_2$ such that $\bar{z}(t_1) > \bar{z}(t)$ for all $t \in [t_2,t_3]$. Let $t_0 = \sup\{ t \in [t_1,t_3)\,:\, \bar{z}(t) \geq \bar{z}(t_1)\}$. Then $t_0 \leq t_2 < t_3$, and
\begin{equation}\label{eq:4.2p}
\bar{z}(t) < \bar{z}(t_1) \leq \bar{z}(t_0-) \quad \text{ for } t \in (t_0,t_3).
\end{equation}
In particular, we deduce that $\bar{z}(t_0+) \leq \bar{z}(t_0-)$. By \eqref{eq:mmp}, it follows that $\bar{z}(t_0+) = \bar{z}(t_0-)$, and that $\mathcal{N}(t_0)$ consists of a single element, which we denote by $\bar{z}_0$.

By the first alternative of \eqref{eq:u2iii}, there exists a small $\delta_1>0$ such that
\begin{equation}\label{eq:thirdineqp}
R(\bar{z}_0,z,t)  >0 \quad \text{ for }z \in (\bar{z}_0 - \delta_1, \bar{z}_0),\, t \in [t_0-\delta_1,t_0].
\end{equation}
Then choose $h\in (0,\delta_1)$ small enough so that $\bar{z}(s) \in (\bar{z}_0 - \delta_1,  \bar{z}_0)$ for all $s \in (t_0,t_0+h)$ (which is guaranteed by $\bar{z}(t_0+) =\bar{z}(t_0-)= \bar{z}_0$ and \eqref{eq:4.2p}). We deduce from \eqref{eq:thirdineqp} that
$$
\int_{t_0}^{t_0+h} R(\bar{z}_0, \bar{z}(s),s)\,ds >0,
$$
where the last inequality follows from $\bar{z}(s)<\bar{z}_0$ (by \eqref{eq:4.2p}). We obtain a contradiction to Remark \ref{rmk:B2p}. 

Next, we observe that \eqref{eq:pp3} follows directly from the upper-semicontinuity of $\mathcal{N}$, i.e. 
\begin{equation}\label{eq:pp4}
\limsup\limits_{t' \to t+}  \mathcal{N}(t') \subset \mathcal{N}(t).
\end{equation}

It remains to show \eqref{eq:pp2}.  It follows from \eqref{eq:mmp} (proved in Lemma \ref{lem:mono1p}) that $\bar{z}(t+) \geq \sup \mathcal{N}(t)$. Moreover, \eqref{eq:mmp} and the monotonicity of $\bar{z}$ imply that  
$$
\bar{z}(t+) \leq \bar{z}(t'-)\leq \inf\mathcal{N}(t') \leq \sup \mathcal{N}(t') \quad \text{ for each }t' >t. 
$$
Using \eqref{eq:pp4}, we may let $t' \to t+$ to deduce $\bar{z}(t+) \leq  \sup\mathcal{N}(t)$. This proves \eqref{eq:pp2}.
\end{proof}

\begin{proof}[Proof of Proposition \ref{propunique}]
For $i=1,2$, let $(V_i(z,t),\bar{z}_i(t)) \in W^{1,\infty}(\mathbb{R} \times [0,T])\times BV[0,T]$ be two solutions of \eqref{eq:limith} and \eqref{eq:limith2}. It suffices to show that $\bar{z}_1(t) = \bar{z}_2(t)$ a.e. in $[0,T]$. 
 Without loss of generality, one can reduce to the case that for each $t>0$,  $\bar{z}_1 \neq \bar{z}_2$ in a set of positive measure in $(0,t)$.
Furthermore, by considering $z' = -z$ if necessary, we can assume the first alternative of \eqref{eq:u2iii} to hold.

To apply \cite[Section 3, Remark 3]{Calvez2018}, it remains to verify {\bf(U1)-(U3)}. Now,  observe that the Hamiltonian function in \eqref{eq:limith} is smooth and satisfies $H(\bar z,t,z,p) = |p|^2 +R(z, \bar z,t)$, so that $L(\bar z, z, t,v) = |v|^2/4 - R(z, \bar z,t)$. This verifies {\bf(U1)}. The condition {\bf(U2)} also holds,  as $V_i(z,t)$ admits the variational characterization \eqref{eq:variationp}.

It suffices to check {\bf(U3)}. Define 
$$
z^t_i = \sup \mathcal{N}_i(t),\quad \text{ where }\quad \mathcal{N}_i(t) = \{z \in I: V_i(z,t) = 0\},$$ 
and let $\gamma^t_i \in AC[0,t]$ be the minimizing path corresponding to the value $V_i(z^t_i,t) = 0$. We need to verify
the following three conditions:
\begin{itemize}
\item[{\rm(i)}] $\lim\limits_{t \to 0+} z^t_i = \bar{z}_0:= \sup\{z \in I: V_0(z)=0\}$,
\item[{\rm(ii)}] $\displaystyle \inf\limits_{0 < \theta < 1} \partial_{z_2}R(\lim_{t \to 0+}[(1-\theta)\bar{z}_1(t) + \theta \bar{z}_2(t)], \bar{z}_0,0) <0$,
\item[{\rm(iii)}] $\limsup\limits_{t\to 0+}\|\dot\gamma^t_i\|_{L^\infty(0,t)} <+\infty$.
\end{itemize}

To verify condition (i), we apply \eqref{eq:pp3} and \eqref{eq:pp2} to get
\begin{equation}\label{eq:pp5}
\lim_{t \to 0+}\bar{z}_1(t) = \lim_{t \to 0+}\bar{z}_2(t) = \lim_{t \to 0+} z^t_i =\bar{z}_0. 
\end{equation}

To verify condition (ii), observe that \eqref{eq:u2ii} and the first alternative of \eqref{eq:u2iii} imply that $\partial_{z_2}R(\bar{z}_0,\bar{z}_0,0) = -\partial_{z_1}R(\bar{z}_0,\bar{z}_0,0) <0$. Using \eqref{eq:pp5}, we can then compute 
$$ \sup_{0 < \theta < 1} \partial_{z_2} R(\lim_{t \to 0+}[(1-\theta)\bar{z}_1(t) + \theta \bar{z}_2(t)], \bar{z}_0,0) = \partial_{z_2}R(\bar{z}_0,\bar{z}_0,0)  <0.$$

For condition (iii), we observe that the initial data $g$ and the Lagrangian function $\frac{|v|^2}{4} - R(I,t,x)$ are periodic in $x$, so the minimizing paths $\gamma^t_i$ (corresponding to $(z^t_i,t)$) exists and is uniformly bounded in $L^\infty$. One can derive the regularity of $\dot\gamma^t_i$ by repeating the arguments in \cite[Section2]{Calvez2018}.

Having verified {\bf(U1)-(U3)}, one can then invoke  \cite[Section 3, Remark 3]{Calvez2018} to yield a contradiction. This proves that $\bar{z}_1(t) \equiv \bar{z}_2(t)$ a.e. in $[0,T]$. That $V_1(z,t) \equiv V_2(z,t)$ follows from the standard uniqueness results.
\end{proof}

\bibliographystyle{siam} 
\bibliography{refs} 

\end{document}